\author[F. Duzaar]{Frank Duzaar}
\address{Frank Duzaar\\Department Mathematik, Universit\"at
Erlangen--N\"urnberg\\ Bismarckstrasse 1 1/2, 91054 Erlangen, Germany}
\email{duzaar@mi.uni-erlangen.de}
\author[G. Mingione]{Giuseppe Mingione}
\address{Dipartimento di Matematica, Universit\`a di Parma\\
Viale G.~P.~Usberti 53/a, Campus, 43100 Parma, Italy}
\email{giuseppe.mingione@unipr.it.}
\newcommand{\oscs}{\mathop{\mathrm{osc}}}
\newtheorem{theorem}{Theorem}[section]
\newtheorem{prop}{Proposition}[section]
\newtheorem{lemma}{Lemma}[section]
\newtheorem{cor}{Corollary}[section]
\theoremstyle{definition}
\newtheorem{remark}{Remark}[section]
\numberwithin{equation}{section}
\newcommand{\tta}{\tilde{a}}
\newcommand{\rif}[1]{(\ref{#1})}
\newcommand\ap{``}
\newcommand{\tu}{\tilde{u}}
\def\eqn#1$$#2$${\begin{equation}\label#1#2\end{equation}}
\def\charfn_#1{{\raise1.2pt\hbox{$\chi
_{\kern-1pt\lower3pt\hbox{{$\scriptstyle#1$}}}$}}}
\def\qq1{q_*}
\def\q2{q_{**}}
\def\ep{\varepsilon}
\def\en{\mathbb N}
\def\er{\mathbb R}
\def\loc{\operatorname{loc}}
\newdimen\vintbar
\def\vint{-\kern-\vintbar\int}
\def\H{\mathcal H}
\def\0{\boldsymbol 0}
\newcommand{\ratio}{\nu, L}
\newcommand{\MM}{\mathcal{M}}
\newcommand{\divo}{\textnormal{div}}
 \newcommand{\mean}[1]{-\hskip-1.08em\int_{#1}}
\newcommand{\npma}{{\bf W}_{\frac{1}{p},p}^{\mu}}
\newcommand{\ww}{{\bf W}_{1,p}^{\mu}}
\newcommand{\trif}[1] {\textnormal{\rif{#1}}}
\newtoks\by
\newtoks\paper
\newtoks\book
\newtoks\jour
\newtoks\yr
\newtoks\pages
\newtoks\vol
\newtoks\publ
\def\et{ \& }
\def\name[#1, #2]{#1 #2}
\def\ota{{\hbox{\bf ???}}}
\def\cLear{\by=\ota\paper=\ota\book=\ota\jour=\ota\yr=\ota
\pages=\ota\vol=\ota\publ=\ota}
\def\endpaper{\the\by, \textit{\the\paper},
{\the\jour} \textbf{\the\vol} (\the\yr), \the\pages.\cLear}
\def\endbook{\the\by, \textit{\the\book},
\the\publ, \the\yr.\cLear}
\def\endpap{\the\by, \textit{\the\paper}, \the\jour.\cLear}
\def\endproc{\the\by, \textit{\the\paper}, \the\book, \the\publ,
\the\yr, \the\pages.\cLear}
\title[Gradient estimates via non-linear potentials] {Gradient estimates via non-linear potentials}
\begin{document}
to appear in the American Journal of Mathematics

\maketitle

\begin{abstract}
We present pointwise gradient bounds for solutions to $p$-Laplacean type non-homogeneous equations employing non-linear Wolff type potentials, and then prove similar bounds, via suitable caloric potentials, for solutions to parabolic equations. 
\end{abstract}
\tableofcontents
\section{Introduction and main results}
The aim of this paper is to give a somehow unexpected but nevertheless natural maximal order - and parabolic - version of a by now classical result due to Kilpel\"ainen \& Mal\'y \cite{KM}, later extended, by mean of a different approach, by Trudinger \& Wang \cite{TW}. In \cite{KM, TW} the authors obtained a neat pointwise bound - see \rif{KM} below - for solutions to non-homogeneous quasi-linear equations of $p$-Laplacean type by mean of certain natural Wolff type non-linear potentials of the right-hand side measure datum; this is commonly considered as a basic result in the theory of quasi-linear equation. In this paper {\em we upgrade such a result by showing that similar pointwise bounds also hold for the gradient of solutions}, see \rif{mainestx} below. The resulting estimate finally allows to draw a complete and unified picture concerning gradient estimates for degenerate problems: it permits to recast classical gradient estimates for the non-homogeneous $p$-Laplacean equation obtained by Iwaniec \cite{I} and DiBenedetto \& Manfredi \cite{DM}, as well as the pointwise $L^\infty$ estimates obtainable when the right hand side is integrable enough \cite{Ma, Manth}, up to the classical gradient bounds of Boccardo \& Gallou\"et \cite{BG1, BG2}, Lindqvist \cite{Lind} and Dolzmann \& Hungerb\"uhler \& M\"uller \cite{DHM} for measure data problems; for more on the $p$-Laplacean see also the notes of Lindqvist \cite{Lnotes}. More significantly, the results here allow for {\em a characterization of Lipschitz continuity of solutions in terms of Riesz potentials which is analogous to the one available for the standard Poisson equation}; see \rif{minass} and \rif{mainestxr} below. Furthermore, the estimates extend to the case of coefficient dependent equations, allowing to get regularity criteria 
with respect to the regularity of the coefficients which are exactly those ones suggested by the analysis of the fundamental solutions of linear equations \cite{GW}. Our elliptic results find a natural but non-trivial analogue in the parabolic case, as we are also going to show here for a significant class of parabolic problems. In this case we shall employ a natural family of \ap caloric" type Riesz potentials to give a pointwise bound for the spatial derivative of solutions; in turn, the resulting inequality implies relevant sharp borderline integral estimates which, known in the elliptic case, are otherwise unreachable via the known techniques, in the parabolic one. 
In particular, this finally allows to make the missing link between elliptic and parabolic borderline estimates such as those in Lorentz spaces. Moreover, as in the elliptic case, {\em we find a sharp characterization of the local boundedness of the gradient via
Caloric potentials}; see \rif{minasspar} below. It is worth mentioning that the Lipschitz continuity estimates and the techniques of this paper are the starting point for further developments, eventually leading to $C^{1}$-regularity assertions for solutions \cite{DMcont}.
For notation and more details we recommend the reader to look at Section 2 below.

\subsection{Degenerate Elliptic estimates}
In this section the growth exponent $p$ will be a number such that $p\geq 2$, while for the rest of the paper $\Omega$ will denote a bounded open and Lipschitz domain of $\er^n$, with $n\geq 2$.
We shall consider general non-linear, possibly degenerate elliptic equations with $p$-growth of the type
\eqn{baseq}
$$
-\divo \ a(x,Du)=\mu\,.
$$
whenever $\mu$ is a Radon measure defined on $\Omega$ with finite total mass; eventually letting $\mu(\er^n \setminus \Omega)=0$, without loss of generality we may assume that $\mu$ is defined on the whole $\er^n$. 
The continuous vector field $a \colon \Omega \times \er^n \to \er^n$ is assumed to be $C^1$-regular in the gradient variable $z$, with $a_z(\cdot)$ being Carath\'eodory regular, and
satisfying the following {\em growth, ellipticity and continuity assumptions}:
\eqn{asp}
$$
\left\{
    \begin{array}{c}
    |a(x,z)|+|a_{z}(x,z)|(|z|^2+s^2)^{\frac{1}{2}} \leq L(|z|^2+s^2)^{\frac{p-1}{2}} \\ [3 pt]
    \nu^{-1}(|z|^2+s^2)^{\frac{p-2}{2}}|\lambda|^{2} \leq \langle a_{z}(x,z)\lambda, \lambda
    \rangle\\[3pt]
    |a(x,z)-a(x_0,z)|\leq L_1\omega(|x-x_0|)(|z|^2+s^2)^{\frac{p-1}{2}}
    \end{array}
    \right.
$$
whenever $x,x_0 \in \Omega$ and $z, \lambda \in \er^n$, where
$0< \nu\leq L$ and $s\geq 0, L_1\geq 1$ are fixed parameters. In \rif{asp} the function $\omega\colon [0,\infty) \to [0,\infty)$ is a modulus of continuity i.e.,
a non-decreasing concave function such that $\omega(0)=0=\lim_{\varrho\downarrow 0}\omega (\varrho)$ and $\omega(\cdot)\leq 1$.
On such a function we impose a natural decay property, which is essentially optimal for the result we are going to have, and prescribes a {\em Dini-continuous dependence of the partial map} $x \mapsto a(x,z)/(|z|+s)^{p-1}$:
\eqn{intdini}
$$
\int_0^R [\omega(\varrho)]^{\frac{2}{p}}\,\frac{ d \varrho}{\varrho} := d(R)< \infty\,.
$$
The prototype of \rif{baseq} is - choosing $s=0$ and omitting the $x$-dependence - clearly given by the $p$-Laplacean equation
\eqn{plap}
$$
-\divo\, (|Du|^{p-2}Du)=\mu\,.
$$
Our estimate involves the classical non-linear Wolff potential defined by
\eqn{newpot}
$$
{\bf W}^{\mu}_{\beta, p}(x_0,R):= \int_0^R \left(\frac{|\mu|(B(x_0,\varrho))}{\varrho^{n-\beta p}}\right)^{\frac{1}{p-1}}\, \frac{d\varrho}{\varrho}\qquad \qquad  \beta \in (0,n/p]\,.
$$
The first result, which we naturally state as an a priori estimate, is
\begin{theorem}[Non-linear potential gradient bound]\label{mainx} Let $u \in C^{1}(\Omega)$ be a weak solution to \trif{baseq}
 with $\mu \in L^1(\Omega)$ under the assumptions \trif{asp}. Then there exists a constant $c \equiv c (n,p,\ratio)>0$, and a positive radius $\tilde{R}\equiv \tilde{R}(n,p,\ratio,L_1, \omega(\cdot))$, such that
the pointwise estimate
\eqn{mainestx}
$$
|Du(x_0)| \leq c\mean{B(x_0,R)}(|Du|+s)\, dx  + c\npma(x_0,2R)
$$
holds whenever $B(x_0,2R)\subseteq \Omega$ and $R\leq \tilde{R}$. Moreover, when the vector field $a(\cdot)$ is independent of $x$ - and in particular for the $p$-Laplacean operator \trif{plap} - estimate \trif{mainestx} holds without any restriction on $R$.
\end{theorem}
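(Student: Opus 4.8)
The plan is to run the gradient analogue of the Kilpel\"ainen--Mal\'y scheme: I would estimate $|Du(x_0)|$ by comparing $u$, on a geometric sequence of concentric balls centered at $x_0$, with solutions of homogeneous, $x$--independent equations, for which the gradient enjoys a quantitative decay, and then sum the resulting comparison errors. The exponent $\beta=1/p$ in the Wolff potential is the only one compatible with this plan, since $n-\beta p=n-1$ and hence the dyadic form of $\npma(x_0,2R)$ is $\sum_i(|\mu|(B(x_0,\tau^iR))/(\tau^iR)^{n-1})^{1/(p-1)}$, which is exactly the comparison error contributed by the measure $\mu$. Since $u\in C^1(\Omega)$, every $x_0$ can be handled as a Lebesgue point of $Du$, and after a scaling one may fix $R$.

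\emph{Step 1: comparison estimates.} On $B_\varrho:=B(x_0,\varrho)$ with $B_{2\varrho}\subseteq\Omega$, let $w\in u+W^{1,p}_0(B_\varrho)$ solve $\divo\,a(x,Dw)=0$. Testing the equations for $u$ and $w$ with truncations of $u-w$, using the ellipticity in \trif{asp} together with the usual monotonicity inequalities for $V(z):=(s^2+|z|^2)^{(p-2)/4}z$ and Sobolev embedding, one obtains, since $p\ge2$, a comparison estimate of the shape
\[
\mean{B_\varrho}|Du-Dw|\,dx\ \le\ c\left(\frac{|\mu|(B_\varrho)}{\varrho^{n-1}}\right)^{\frac{1}{p-1}}\ +\ (\text{lower order terms})\,.
\]
Then, to freeze the coefficient, I would compare $w$ with $v\in w+W^{1,p}_0(B_{\varrho/2})$ solving $\divo\,a(x_0,Dv)=0$; the continuity assumption in \trif{asp}, together with the passage from a $V$--distance to a distance of the gradients themselves --- which is responsible for the exponent $2/p$ landing on $\omega$ and is exactly why the Dini condition \trif{intdini} is imposed with that power --- yields
\[
\mean{B_{\varrho/2}}|Dw-Dv|\,dx\ \le\ c\,[\omega(\varrho)]^{\frac{2}{p}}\left(\mean{B_\varrho}(|Du|+s)\,dx+\ldots\right)\,.
\]
Finally, for the frozen homogeneous solution $v$ one invokes the classical $C^{1,\alpha}$--theory for degenerate equations: $Dv$ is locally bounded, with $\sup_{B_{\varrho/4}}(|Dv|+s)\le c\mean{B_{\varrho/2}}(|Dv|+s)\,dx$, and a suitable gradient excess of $v$ decays like $\tau^{\alpha}$, for some $\alpha=\alpha(n,p,\nu,L)\in(0,1)$.

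\emph{Step 2: iteration.} I would then combine the three facts, fix $\tau$ so small that the contraction factor is $\le\tfrac12$, and iterate along $B_i:=B(x_0,\tau^iR)$. This gives a recursion for the (suitably chosen) gradient excess on $B_i$, equivalently for the increments $|(Du)_{B_{i+1}}-(Du)_{B_i}|$, whose inhomogeneity is a measure term plus an $\omega$--term. Summing over $i$, the contracting part produces $c\mean{B_R}(|Du|+s)\,dx$; the measure terms add up, via dyadic comparison, to $c\,\npma(x_0,2R)$; and the $\omega$--terms are summable by \trif{intdini}, contributing $c\,d(R)\sup_i\mean{B_i}(|Du|+s)\,dx$. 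Since $|Du(x_0)|=\lim_i|(Du)_{B_i}|$, the estimate \trif{mainestx} follows once this last term is reabsorbed on the left, which is possible provided $d(\tilde R)$ is below a structural threshold --- and this is precisely what forces $R\le\tilde R$ with $\tilde R$ depending on $L_1$ and $\omega(\cdot)$. If $a$ is independent of $x$, the freezing step and \trif{intdini} are vacuous, there is nothing to reabsorb, and the argument runs for all $R$, giving the last assertion.

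\emph{Main obstacle.} The two comparison estimates are by now standard. The delicate point is the iteration: one must choose the gradient excess functional carefully, so that the frozen homogeneous equation actually produces a genuine decay --- the linearized operator degenerates where $Dv$ vanishes, so the natural choice is an excess of $V(Du)$ rather than of $Du$, with a translation back to $Du$ at the end costing the exponent $2/p$ --- and so that, simultaneously, the coefficient--freezing error gets absorbed instead of accumulating along the scales. A secondary difficulty is making the $V$--to--gradient conversions precise enough in the genuinely degenerate range $p>2$ for the displayed estimates to hold with the stated exponents and the leftover terms to be truly of lower order.
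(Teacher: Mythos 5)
Your strategy is essentially the paper's proof: the same two-step comparison ($u \to w$ on $B_{2R}$, then $w \to v$ with coefficients frozen at $x_0$), the same origin of the exponents $1/p$ (from the density $|\mu|(B_\varrho)/\varrho^{n-1}$ raised to $1/(p-1)$) and $2/p$ (from converting the $V$-type freezing estimate back to the gradient), and the same dyadic iteration in which the measure terms sum to the Wolff potential and the Dini-summable coefficient terms are reabsorbed — in the paper this reabsorption is done by induction on $k_i=|(Du)_{B_i}|$ rather than by a sup over scales, which is the rigorous form of what you sketch. The one obstacle you flag, the gradient excess decay for the degenerate frozen equation, is resolved in the paper not via a $V$-excess but by proving an $L^1$-excess decay for $Dv$ itself below the natural growth exponent (Theorem \ref{sotto}), using Lieberman's degenerate/non-degenerate alternative to handle the region where the linearized operator degenerates.
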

The potential $\npma$ appearing in \rif{mainestx} is the natural one since its shape respects the scaling properties of the equation with respect to the estimate in question - see \underline{}Section \ref{ssee} and compare with the linear estimate in \rif{stima0}; moreover, the potential $\npma$ is in a sense {\em optimal}, as in an estimate like \rif{mainestx} it implies the recovering of the sharp integrability results known for general weak solutions to \trif{baseq}. Moreover, as a matter of fact when the right hand side is a properly singular measure estimate \rif{mainestx} reverses; for all such aspects we refer to Remark \ref{wsharp} below. An approximation procedure eventually allows to remove the extra regularity assumptions $u \in C^{1}(\Omega)$ and $\mu \in L^1(\Omega)$
up to consider the most general case of solutions to measure data problems as
\eqn{Dir1}
$$
\left\{
    \begin{array}{cc}
    -\divo \ a(x,Du)=\mu & \qquad \mbox{in $\Omega$}\\
        u= 0&\qquad \mbox{on $\partial\Omega$\,.}
\end{array}\right.
$$ Our result indeed holds for general Solutions Obtained by Limit of Approximations (\textnormal{SOLA}) to \rif{Dir1}, a class of very weak solutions - solutions not necessarily lying in the natural energy space $W^{1,p}(\Omega)$ - which is introduced to get existence and uniqueness in several cases. Moreover, SOLA coincide with usual weak solutions for regular data $\mu \in W^{-1,p'}(\Omega)$; we refer to Section \ref{app} below for more details, see also Theorems \ref{mainx2}-\ref{mainx22} below. When extended to general weak solutions {\em estimate \trif{mainestx} tells us the remarkable fact that the boundedness of $Du$ at a point $x_0$ is independent of the solution $u$, and of the vector field $a(\cdot)$, but only depends on the behavior of $|\mu|$ in a neighborhood of $x_0$.}
\begin{cor}[$C^{0,1}$-regularity criterium]\label{corollario} Let $u \in W^{1,p-1}_0(\Omega)$ be a \textnormal{SOLA} to the problem \trif{Dir1} - which is unique in the case $\mu \in L^1(\Omega)$ -
under the assumptions \trif{asp} and \trif{intdini}. Then
\eqn{minass}
$$\npma(\cdot,R) \in L^{\infty}(\Omega) \ \mbox{for some}\  R>0 \Longrightarrow Du \in L^{\infty}_{\loc}(\Omega, \er^n)\,.$$
Furthermore, there exists a constant $c$, depending only on $n,\ratio, L_1, d(\cdot)$, such that the following estimate holds whenever $B_{2R}\subseteq \Omega$:
\eqn{minimalcri}
$$
\|Du\|_{L^{\infty}(B_{R/2})} \leq c \mean{B(x_0,R)}(|Du|+s)\, dx+ c\left\|\npma(\cdot,R)\right\|_{L^{\infty}(B_{R})} \,.
$$
\end{cor}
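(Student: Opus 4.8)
The plan is to read off the quantitative bound \rif{minimalcri} from the pointwise gradient estimate \rif{mainestx} of Theorem \ref{mainx} by a routine covering argument — first in the regular sub-class $u\in C^{1}(\Omega)$, $\mu\in L^{1}(\Omega)$ to which Theorem \ref{mainx} applies verbatim — and then to remove the extra regularity through the SOLA approximation scheme of Section \ref{app}; the implication \rif{minass} will then follow from \rif{minimalcri} by a simple localisation.

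\textbf{Step 1 (a priori estimate).} Let $u\in C^{1}(\Omega)$ solve \rif{baseq} with $\mu\in L^{1}(\Omega)$, and let $B_{2R}=B(x_0,2R)\subseteq\Omega$. For an arbitrary point $y\in B_{R/2}$ I would apply Theorem \ref{mainx} at $y$ with radius $R/4$; this is admissible since $B(y,R/2)\subseteq B_{R}:=B(x_0,R)\subseteq\Omega$ (and, in the $x$-dependent case, since $R/4\leq\tilde R$ — the large-scale case being commented on below), and it gives
\[
|Du(y)|\leq c\,\mean{B(y,R/4)}(|Du|+s)\,dx+c\,{\bf W}^{\mu}_{\frac{1}{p},p}(y,R/2)\,.
\]
Since $B(y,R/4)\subseteq B(x_0,3R/4)\subseteq B_{R}$, comparing the volumes of these concentric balls gives $\mean{B(y,R/4)}(|Du|+s)\,dx\leq 4^{n}\mean{B_{R}}(|Du|+s)\,dx$; moreover $y\in B_{R}$ and $\varrho\mapsto{\bf W}^{\mu}_{\frac{1}{p},p}(y,\varrho)$ is non-decreasing, so ${\bf W}^{\mu}_{\frac{1}{p},p}(y,R/2)\leq\|\npma(\cdot,R)\|_{L^{\infty}(B_{R})}$. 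Taking the supremum over $y\in B_{R/2}$ and renaming $c$ yields \rif{minimalcri}; when $a(\cdot)$ is $x$-independent the scale restriction disappears, consistently with the last assertion of Theorem \ref{mainx}. The criterion \rif{minass} is now immediate: if $\npma(\cdot,R_0)\in L^{\infty}(\Omega)$, then for every $x_0\in\Omega$ one applies \rif{minimalcri} on $B(x_0,r)$ with $r\leq\min\{R_0,\tfrac12\dist(x_0,\partial\Omega)\}$, using $\npma(\cdot,r)\leq\npma(\cdot,R_0)$, to obtain $\|Du\|_{L^{\infty}(B(x_0,r/2))}<\infty$; since $x_0$ is arbitrary, $Du\in L^{\infty}_{\loc}(\Omega,\er^{n})$.

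\textbf{Step 2 (approximation).} Next I would extend this to a general SOLA $u\in W^{1,p-1}_{0}(\Omega)$ to \rif{Dir1}. By construction $u$ arises as a limit of weak solutions $u_{k}\in W^{1,p}(\Omega)$ to $-\divo\,a(x,Du_{k})=\mu_{k}$, with $\mu_{k}$ a regularization of $\mu$; each $u_{k}$ is $C^{1}_{\loc}$ by the interior regularity theory for $p$-Laplacean type equations with regular right-hand side, so Step 1 applies to every $u_{k}$. As recalled in Section \ref{app}, the $\mu_{k}$ can be chosen so that $Du_{k}\to Du$ almost everywhere (and in $L^{q}$ for a suitable $q\geq1$) and so that the Wolff potentials behave stably along the approximation, in the sense that $\limsup_{k}{\bf W}^{\mu_{k}}_{\frac{1}{p},p}(y,R)$ is controlled by $\npma(y,R)$ (a comparable radius would anyway suffice for \rif{minass}). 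Writing \rif{minimalcri} for $u_{k}$ and letting $k\to\infty$ — Fatou's lemma on the left-hand side, the $L^{1}$-convergence of $Du_{k}$ together with the Wolff-potential stability on the right — then gives \rif{minimalcri} for the SOLA $u$; uniqueness when $\mu\in L^{1}(\Omega)$ is the known uniqueness of SOLA, also recalled in Section \ref{app}.

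\textbf{Main difficulty.} All the substance sits in Theorem \ref{mainx}; granted it, the two delicate points of the Corollary are (i) the choice of the regularizing measures $\mu_{k}$ so that their Wolff potentials stay controlled by $\npma$ while $Du_{k}\to Du$ a.e.\ — the part of the limiting procedure that is genuinely not routine — and (ii), for $x$-dependent $a(\cdot)$, the absence of a scale restriction in \rif{minimalcri} (reflected in the dependence of $c$ on $L_{1}$ and $d(\cdot)$), which is inherited from the global form of the pointwise bound established in the course of proving Theorem \ref{mainx}, and which is in any case irrelevant for the qualitative statement \rif{minass}, for which working at the fixed scale $\tilde R$ already suffices. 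Both (i) and (ii) are supplied by the approximation theory developed in Section \ref{app}.
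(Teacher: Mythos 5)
Your proposal is correct and follows essentially the route the paper intends: the sup-bound \rif{minimalcri} is exactly the pointwise estimate \rif{mainestx} of Theorem \ref{mainx} applied at each point of $B_{R/2}$ at a comparable scale (using monotonicity of $\varrho\mapsto\npma(\cdot,\varrho)$ and enlargement of the averaged ball), and the passage to SOLA is the approximation scheme of Section \ref{app} leading to Theorem \ref{mainx2} — the only cosmetic difference being that the paper passes to the limit in the discrete dyadic sums \rif{qquu3} (using $\limsup_h|\mu_h|(B_i)\leq|\mu|(\overline{B_i})$ and a slight enlargement of the balls) rather than in the final potential estimate, which is equivalent to your ``Wolff-potential stability'' under mollification.
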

\begin{remark} Corollary \ref{corollario} allows for a natural Lipschitz continuity criterium with respect to the regularity of coefficients \rif{intdini} and moreover to obtain for the $p$-Laplacean, essentially the same criterium of Lipschitz continuity available for the Laplacean, that is the one via Riesz potentials; see \rif{mainestxr} below and also \rif{minass2} for a corollary. {\em We remark that finding conditions on $\mu$ implying the boundedness of $Du$ was a major open issue in the theory of $p$-Laplacean equation and \trif{minass} seems to provide a first satisfying answer to the problem.} Indeed Cianchi \cite{Ci0} has shown that already in the plain case of the standard Poisson equation \eqn{poisson}
$$
-\triangle u = \mu
$$ the gradient might be unbounded as soon as ${\bf W}_{\frac{1}{2},2} \not \in L^\infty$.
The estimates above also emphasize the role played by Dini-continuity of the coefficients in order to get
the boundedness of the gradient, which is basically the same one observed when dealing with linear elliptic equations with variable Dini-continuous coefficients, and proving pointwise bounds on the derivatives of the Green's function \cite[Section 3]{GW}. As a matter of fact a feature of the approach adopted here is that estimate \rif{mainestx} allows to derive conditions for gradient boundedness, {\em which are borderline simultaneously} w.r.t.~the right hand side - i.e.~\rif{minass} - and w.r.t.~coefficients - i.e.~\rif{intdini}, already in the linear case. Indeed, a recent striking example of Jin \et Mazya \et Van Schaftingen \cite{mazyaex} shows that there exist distributional solutions $u$ to linear equations of the type
$$
\divo\,  (A(x)Du)=0
$$
with a continuous (not Dini-continuous) and elliptic matrix $A(\cdot)$, such that $u \in W^{1,p}$ for every $p<\infty$ as predicted by the standard regularity theory, but such that $u \not\in W^{1,\infty}$; in this case one even has that $Du \not\in$ BMO.
\end{remark}
\subsection{Zero-order and related estimates.}\label{ssee}
The linkage between estimate \rif{mainestx} and the estimates of \cite{KM,TW} is clear. The linear case
$
a(x,z)=z
$
obviously leads to the standard Poisson equation
\rif{poisson}
- here for simplicity considered in the whole $€\er^n$ - for which, due to the use of classical representation formulas, it is possible to get pointwise bounds for solutions via the use of Riezs potentials
\eqn{ellr}
$$
I_\beta(\mu)(x):=\int_{\er^n}\frac{d \mu(y)}{|x-y|^{n-\beta}}\,,\qquad \qquad \beta \in (0,n]
$$
such as
\eqn{stima0}
$$
|u(x)|\leq  c I_2(|\mu|)(x) \qquad \mbox{and}\qquad |Du(x)|\leq  c I_1(|\mu|)(x) \,,
$$
the first being actually valid for $n\geq 3$. We recall that the equivalent, localized version of the Riesz potential $I_\beta(\mu)$
is given by the linear potential
\eqn{WWRR}
$$
{\bf I}_{\beta}^\mu(x_0,R)\equiv {\bf W}_{\frac{\beta}{2},2}^\mu(x_0,R)= \int_0^R \frac{\mu(B(x_0,\varrho))}{\varrho^{n-\beta}}\, \frac{d\varrho}{\varrho}\leq I_\beta(\mu)(x_0)\,,
$$
the last line being valid for non-negative measures (note that we use two different - but similar - notations for the Riesz potential in \rif{ellr} and its \ap polar version" in \rif{WWRR}). The natural non-linear version of \rif{stima0}, derived in \cite{KM,TW} {\em for non-negative measures}, when $p\leq n$, is treated via Wolff potentials:
\eqn{KM}
$$
|u(x_0)| \leq c  \left(\mean{B(x_0,R)}(|u|+s)^{\gamma}\, dx\right)^{\frac{1}{\gamma}}  + c\ww(x_0,2R)\qquad \gamma >p-1\,.
$$
Estimate \rif{mainestx} {\em upgrades} \rif{KM} to the gradient/maximal level, obviously replacing $\ww$ with ${\bf W}^{\mu}_{\frac{1}{p},p}$, and
represents the $p$-Laplacean analog of the second estimate in the left hand side of \rif{stima0}. Indeed, for $p=2$ by \rif{WWRR} we have ${\bf W}_{\frac{1}{2},2}^\mu(x_0,R)\leq I_1(|\mu|)(x_0)$.
 Let us mention that the technique developed for estimate \rif{mainestx} also yields an alternative proof of estimate \rif{KM} which now holds {\em for general signed measures}; see Remark \ref{alternative} below.
 We also refer to \cite{mis3} for gradient potential estimates when $p=2$, and to the important work of Labutin \cite{labutin} for relevant Wolff type potential estimates related to fully non-linear Hessian type operators. For further interesting relations between degenerate quasilinear equations and potentials we refer to the recent interesting paper of Lindqvist \& Manfredi \cite{LiMa}. Another consequence of estimate \rif{mainestx} and of the classical bound
\begin{eqnarray}
\nonumber \npma(x_0,\infty) & = & \int_{0}^\infty \left(\frac{|\mu|(B(x_0,\varrho))}{\varrho^{n-1}}\right)^{\frac{1}{p-1}} \frac{d \varrho}{\varrho}\\ & \leq &
 c I_{\frac{1}{p}}\left\{\left[I_{\frac{1}{p}}(|\mu|)\right]^{\frac{1}{p-1}}\right\}(x_0)=: c {\bf V}_{\frac{1}{p},p}(|\mu|)(x_0)\label{rieszbound}
\end{eqnarray}
is the estimate
\eqn{mainestxr}
$$
|Du(x_0)| \leq c\mean{B(x_0,R)}(|Du|+s)\, dx  + c {\bf V}_{\frac{1}{p},p}(|\mu|)(x_0)
$$
which holds whenever $B(x_0,2R)\subseteq \Omega$ satisfies the smallness condition imposed in Theorem \ref{mainx}. Here we remind the reader that we have previously extended $\mu$ to the whole space $\er^n$. The non-linear potential ${\bf V}_{\frac{1}{p},p}(\mu)(x_0)$ - often called the Havin-Maz'ja potential of $\mu$ - is a classical object in non-linear potential theory, and together with the bound \rif{rieszbound} comes from the fundamental and pioneering work of Adams \& Meyers and Havin \& Maz'ja; see also \cite{AdHe, AdMe, MH}. Estimate \rif{mainestxr} allows to derive all types of local estimates starting by the properties of the Riesz potential; see Section \ref{localest} below.
\subsection{Parabolic estimates} Our aim here is not only to give a parabolic version of the elliptic estimate \rif{mainestx}, but also to give a zero order estimate, that is the parabolic analog of the zero order elliptic estimate \cite{KM}, which at this point will essentially follow as a corollary of the proof of the gradient estimate. 
We consider quasi-linear parabolic equations of the type
\eqn{basicpar}
$$
u_t - \divo \ a(x,t,Du)= \mu\,,
$$
in a cylindrical domain $\Omega_T:= \Omega\times (-T,0)$, where as in the previous sections $\Omega
\subseteq \er^n$, $n \geq 2$ and $T>0$.
The vector-field $a\colon\Omega_T\times\er^n\to\er^n$ is assumed to be
Carath\`eodory regular together with $a_z(\cdot)$, and indeed being $C^1$-regular with respect to the gradient variable $z \in \er^n$, and satisfying the following standard growth, ellipticity/parabolicity and continuity conditions:
\begin{equation}\label{par1}
    \left\{
    \begin{array}{c}
 |a(x,t,z)|+|a_{z}(x,t,z)|(|z|+s) \leq L(|z|+s) \\[4pt]
    \nu|\lambda|^2 \leq \langle a_{z}(x,t,z)\lambda, \lambda\rangle  \\[4pt]
    |a(x,t,z)-a(x_0,t,z)|\leq L_1\omega(|x-x_0|)(|z|+s)
    \end{array}
    \right.
\end{equation}
for every choice of $x, x_0\in\Omega$, $z,\lambda \in\er^n$ and $ t \in (-T,0)$; here the function $\omega\colon [0, \infty) \to [0,1]$ is as in \rif{asp}$_3$ for $p=2$. Note that anyway we are assuming no continuity on the map $t \mapsto a(\cdot, t, \cdot)$, which is considered to be a priori only measurable. In other words we are considering the analog of assumptions \rif{asp} for $p=2$; the reason we are adopting this restriction is that when dealing with the evolutionary $p$-Laplacean operator estimates take the usual form only when using so called \ap intrinsic cylinders", according the by now classical parabolic $p$-Laplacean theory developed by DiBenedetto \cite{D}. These are - unless $p=2$ when they reduce to the standard parabolic ones - cylinders whose size locally depends on the size of the solutions itself, therefore a formulation of the estimates via non-linear potentials - whose definition is built essentially using a standard family of balls and it is therefore \ap universal" - is not clear and will be the object of future investigation. We refer to \cite{AM} for global gradient estimates.  

In order to state our results we need some additional terminology. Let us recall that given points $(x,t), (x_0,t_0)\in\er^{n+1}$ the standard parabolic metric is defined by
\eqn{parme}
$$
d_{\rm par}((x,t),(x_0,t_0)):=\max\{|x-x_0|, \sqrt{|t-t_0|}\} \thickapprox \sqrt{|x-x_0|^2+|t-t_0|}
$$
and the related metric balls with radius $R$ with respect to this metric are given by the cylinders of the type $B(x_0,R)\times (t_0-R^2, t_0+R^2)$. The \ap caloric" Riesz potential - compare with elliptic one defined in \rif{ellr} and see Remark \ref{parpot} below - is now built starting from
\begin{equation}\label{parr}
    I_\beta (\mu)((x,t)):=\int_{\er^{n+1}}\frac{d\mu((\tilde x,\tilde t ))}{d_{\rm par}((\tilde x,\tilde t ),(x,t))^{N-\beta}}
    \qquad \quad 0 < \beta \leq N:=n+2
\end{equation}
whenever $(x,t) \in \er^{n+1}$. In order to be used in estimates for parabolic equations, it is convenient to introduce its local version via the usual backward parabolic cylinders - with \ap vertex" at $(x_0,t_0)$ - that is
\eqn{caloricc}
$$
Q(x_0,t_0;R):= B(x_0,R)\times (t_0-R^2, t_0)\,,
$$
and is now given by - with $N := n+2$
\eqn{calpot}
$$
 {\bf I}_{\beta}^\mu(x_0,t_0;R):=
 \int_0^R \frac{|\mu|(Q(x_0,t_0;\varrho))}{\varrho^{N-\beta}}\, \frac{d\varrho}{\varrho}\qquad \mbox{where}\ \beta \in (0,N]\,.
$$
The main result in the parabolic case is
\begin{theorem}[Caloric potential gradient bound]\label{mainp} Under the assumptions \trif{par1} and \trif{intdini}, let $u\in C^0(-T,0; L^2(\Omega))$ be a weak solution to \trif{basicpar} with $\mu \in L^2(\Omega_T)$, and such that $Du\in C^0(\Omega_T)$. Then there exists a constant $c\equiv c(n,\nu, L) $ and a radius $\tilde{R}\equiv \tilde{R}(n,\nu, L, L_1, \omega(\cdot)) $ such that the following estimate:
\eqn{parest1}
$$
|D u(x_0,t_0)|  \leq c \mean{Q(x_0,t_0;R)}(|D u|+s)\, dx\, dt + c{\bf I}_{1}^\mu(x_0,t_0;2R)
$$
holds whenever $Q(x_0,t_0; 2R)\subseteq \Omega$ is a backward parabolic cylinder with vertex at $(x_0,t_0)$ and such that $R \leq \tilde{R}$.
\end{theorem}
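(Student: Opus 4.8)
The plan is to transplant to the parabolic setting the dyadic comparison-and-iteration scheme behind the elliptic bound \trif{mainestx}; the decisive simplification is that under \trif{par1} the vector field has linear growth in $z$, so that the relevant comparison problems are uniformly parabolic \emph{linear} equations whose coefficients are merely bounded and measurable --- and, after freezing the space variable, only measurable in $t$, which is allowed. Fix a backward cylinder with $Q(x_0,t_0;2R)\subseteq\Omega_T$, choose a parameter $\sigma\in(0,1)$ to be determined, and set $R_i:=\sigma^iR$, $Q_i:=Q(x_0,t_0;R_i)$. I will derive a coupled recursive system for the excess $\mathcal{E}_i:=\means{Q_i}|Du-(Du)_{Q_i}|\,dz$ and the averages $A_i:=|(Du)_{Q_i}|$; summing it and letting $i\to\infty$ --- legitimate since $Du\in C^0(\Omega_T)$, so $(Du)_{Q_i}\to Du(x_0,t_0)$ --- yields $|Du(x_0,t_0)|$ on the left and $\means{Q(x_0,t_0;R)}(|Du|+s)\,dz$ together with the caloric potential ${\bf I}_1^\mu(x_0,t_0;2R)$ on the right.

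On each $Q_i$ introduce two comparison maps. First let $v$ solve $v_t-\divo\,a(x,t,Dv)=0$ in $Q_i$ with $v=u$ on the parabolic boundary $\partial_{\mathrm{par}}Q_i$: testing the equation for $u-v$ with truncations $T_k(u-v)$ --- so as to get bounds depending only on the total mass of $\mu$ and not on its $L^2$-norm --- and handling the evolutionary term through Steklov averages, one obtains $\means{Q_i}|Du-Dv|\,dz\le c\,|\mu|(Q_i)/R_i^{N-1}$ with $N=n+2$; here the correct scaling exponent $N-1$ appears, and crucially $\sum_i|\mu|(Q_i)/R_i^{N-1}\le c\,{\bf I}_1^\mu(x_0,t_0;2R)$. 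Second, let $w$ solve $w_t-\divo\,a(x_0,t,Dw)=0$ in $Q_i$ with $w=v$ on $\partial_{\mathrm{par}}Q_i$: an energy estimate together with the oscillation bound \trif{par1}$_3$ gives $\means{Q_i}|Dv-Dw|\,dz\le c\,L_1\,\omega(R_i)\means{Q_i}(|Dv|+s)\,dz$, and $\means{Q_i}(|Dv|+s)\,dz$ is in turn bounded by $c(A_i+\mathcal{E}_i+s)+c\,|\mu|(Q_i)/R_i^{N-1}$ via the first comparison.

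Since $w$ solves a genuinely linear, uniformly parabolic equation with bounded measurable coefficients $a_z(x_0,t,Dw)$, differentiating in space shows that each component of $Dw$ solves a linear divergence-form parabolic equation, so the classical De Giorgi--Nash--Moser theory gives the sup bound $\sup_{Q(x_0,t_0;R_i/2)}|Dw|\le c\means{Q_i}(|Dw|+s)\,dz$ and, above all, the excess-decay estimate $\means{Q(x_0,t_0;\tau R_i)}|Dw-(Dw)_{Q(x_0,t_0;\tau R_i)}|\,dz\le c_*\,\tau^{\alpha}\means{Q_i}|Dw-(Dw)_{Q_i}|\,dz$ for every $\tau\in(0,1)$, with $\alpha\in(0,1)$ and $c_*$ depending only on $n,\nu,L$. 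Splitting $Du-(Dw)_{Q_{i+1}}$ over $Q_{i+1}$ into the two comparison errors plus $Dw-(Dw)_{Q_{i+1}}$, applying the decay with $\tau=\sigma$, and using the volume ratio $|Q_i|/|Q_{i+1}|=\sigma^{-N}$, one reaches
\[
\mathcal{E}_{i+1}\ \le\ c_*\sigma^{\alpha}\,\mathcal{E}_i\ +\ \frac{c}{\sigma^{N}}\,\frac{|\mu|(Q_i)}{R_i^{N-1}}\ +\ \frac{c}{\sigma^{N}}\,L_1\,\omega(R_i)\big(A_i+\mathcal{E}_i+s\big),\qquad A_{i+1}\ \le\ A_i+\frac{c}{\sigma^{N}}\,\mathcal{E}_i .
\]
Now fix $\sigma$ so small that $c_*\sigma^{\alpha}\le 1/4$; this also fixes all the constants $c=c(n,\nu,L)$ below.

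Summing the recursion over $i$, the term $c_*\sigma^{\alpha}\mathcal{E}_i$ is absorbed, $\sum_i|\mu|(Q_i)/R_i^{N-1}\le c\,{\bf I}_1^\mu(x_0,t_0;2R)$, and $\sum_iL_1\omega(R_i)\le c\,L_1\int_0^{R}\omega(\varrho)\,\frac{d\varrho}{\varrho}=c\,L_1\,d(R)<\infty$ by the Dini assumption \trif{intdini} taken with $p=2$ (also $\omega(R)\le c\,d(R)$ by concavity). Inserting the bound $\sup_kA_k\le A_0+c\sum_i\mathcal{E}_i$ from the second recursion, one gets $\sum_i\mathcal{E}_i\le c\,\mathcal{E}_0+c\,{\bf I}_1^\mu(x_0,t_0;2R)+c\,L_1\,d(R)\big(A_0+s+\sum_i\mathcal{E}_i\big)$; choosing $\tilde R$ so that $c\,L_1\,d(\tilde R)\le 1/2$ --- whence the dependence $\tilde R\equiv\tilde R(n,\nu,L,L_1,\omega(\cdot))$ --- absorbs $\sum_i\mathcal{E}_i$ into the left side. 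Together with $A_0+\mathcal{E}_0\le 3\means{Q(x_0,t_0;R)}(|Du|+s)\,dz$ this controls $\sup_kA_k$, and letting $k\to\infty$ gives \trif{parest1}. When $a$ does not depend on $x$ one has $\omega\equiv 0$, the absorption step is vacuous, and no smallness of $R$ is needed, exactly as in the elliptic statement. The step I expect to be the main obstacle is the first comparison estimate: extracting from the truncation/Steklov-average argument precisely the $L^1$ gradient bound with scaling $|\mu|(Q_i)/R_i^{N-1}$ and with constants independent of $\|\mu\|_{L^2}$ --- parabolic measure-data estimates being delicate exactly because $u-v$ is not an admissible test function and the time term must be handled with care --- whereas the whole ``linear side'', the regularity and decay of $Dw$, is classical.
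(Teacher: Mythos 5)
Your scheme is, in all essentials, the paper's own: the two-step comparison (first the homogeneous problem with the full coefficients, then the one with the space variable frozen), the linearization of the frozen equation by differentiating in $x$ so that Nash--Moser theory (Proposition \ref{decbase}) applies to each spatial derivative, the dyadic iteration in which $\sum_i|\mu|(Q_i)/R_i^{N-1}$ is controlled by ${\bf I}_1^\mu(x_0,t_0;2R)$ and $\sum_i\omega(R_i)$ by the Dini integral, and the final absorption under a smallness condition on $\tilde R$ (the paper closes the coupled system by induction on $m$ rather than by summing and absorbing, but the two are equivalent here). Two points, however, need attention.

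First, your freezing estimate $\mean{Q_i}|Dv-Dw|\,dz\le c\,L_1\omega(R_i)\mean{Q_i}(|Dv|+s)\,dz$ does not follow from the energy comparison alone. Testing with the difference of the two solutions yields only the $L^2$ bound \rif{comparisonpar0}, and Cauchy--Schwarz then leaves the $L^2$ average of the gradient on the right-hand side, which is \emph{not} controlled by its $L^1$ average over the same cylinder. The missing ingredient is parabolic higher integrability (Gehring's lemma) combined with the self-improving reverse H\"older inequality of Lemma \ref{revq}, giving \rif{revpar} on an \emph{enlarged} cylinder; this is exactly why the paper performs the two comparisons on nested cylinders $Q_R\subset Q_{2R}$ (Lemma \ref{comp-lem}). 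Your dyadic setup absorbs this at the cost of another factor $\sigma^{-N}$, but as written the step is incomplete. Second, the measure-data comparison you rightly single out as the crux is the paper's Lemma \ref{comp-lem0}, and its proof runs precisely along the lines you indicate: Steklov averages to make the truncations admissible, the resulting $L^\infty$--$L^1$ bound, the level-set estimates \rif{est-on-Ak}, a weighted Cauchy--Schwarz with weight $(1+|u-w|)^{-\lambda}$, and Gagliardo--Nirenberg with $\lambda=(n+1)/n$ to close the superlinear inequality. A minor caveat: the frozen equation is still quasilinear, not linear; the linear equation with bounded measurable coefficients arises only for the differentiated equation, and the differentiation itself must be justified by difference quotients (local $W^{2,2}$-regularity) --- which is possible precisely because the $x$-dependence has been frozen. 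With these ingredients supplied, your argument coincides with the paper's.
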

As in the elliptic case when dealing with parabolic equations with no space variable dependence of the type
 \eqn{basicparaa}
 $$
u_t - \divo\, a(t,Du)=\mu
$$
 we can improve the result of Theorem \ref{mainp} as follows:
\begin{theorem}[Componentwise caloric bound]\label{mainpa} Under the assumptions \trif{par1}, let $u\in C^0(-T,0; L^2(\Omega))$ be a weak solution to \trif{basicparaa} with $\mu \in L^2(\Omega_T)$, and such that $Du\in C^0(\Omega_T)$. Then there exists a constant $c\equiv c(n,\nu, L) $ such that the following estimate:
\eqn{parest1cc}
$$
|D_\xi u(x_0,t_0)|  \leq c \mean{Q(x_0,t_0;R)}(|D_\xi u|+s)\, dx\, dt + c{\bf I}_{1}^\mu(x_0,t_0;2R)
$$
holds whenever $Q(x_0,t_0; 2R)\subseteq \Omega$ is a backward parabolic cylinder with vertex at $(x_0,t_0)$, and $\xi \in \{1,\ldots,n\}$.
\end{theorem}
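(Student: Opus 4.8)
The plan is to run the now-classical comparison-and-iteration scheme on a sequence of shrinking backward parabolic cylinders, exploiting the absence of the spatial variable in \rif{basicparaa} to reduce everything to a single scalar component. Since $a$ does not depend on $x$, differentiating \rif{basicparaa} in the direction $\xi$ shows that $D_\xi u$ solves the scalar linear parabolic equation
\[
\partial_t (D_\xi u) - \divo\big(a_z(t,Du)\,D(D_\xi u)\big) = D_\xi\mu\,,
\]
whose coefficient field $(x,t)\mapsto a_z(t,Du(x,t))$ is, by \rif{par1}$_{1,2}$, bounded and uniformly elliptic with constants depending only on $\nu,L$. This is exactly the point at which $\omega$ and $L_1$ disappear, which is why no smallness condition on $R$ is needed, in contrast with Theorem \ref{mainp}.

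Fix $Q_0 := Q(x_0,t_0;R)$ and write $Q_r := Q(x_0,t_0;r)$ for $r\le R$. The first ingredient is a \emph{gradient comparison estimate}: letting $v$ solve the homogeneous Cauchy--Dirichlet problem $v_t - \divo a(t,Dv)=0$ in $Q_r$ with $v=u$ on the parabolic boundary, one should prove
\[
\mean{Q_r}|Du-Dv|\, dx\, dt \ \le\ c\,\frac{|\mu|(Q_r)}{r^{N-1}}\,, \qquad N:=n+2\,,
\]
with $c\equiv c(n,\nu,L)$. This follows by testing the equation for $u-v$ with truncations of $u-v$ and using the linear structure conditions \rif{par1}; the truncation is what allows one to bypass the fact that $Du-Dv$ need not lie in $L^2$ for a merely measure-valued $\mu$, and keeping the estimate in this dimensionally correct form is what makes the later summation work. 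This is the technical heart of the argument and essentially the parabolic analogue of the comparison lemmas already needed for Theorem \ref{mainp}.

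The second ingredient is a \emph{componentwise decay estimate} for the comparison map. Because \rif{basicparaa} has no $x$-dependence, $w:=D_\xi v$ solves in $Q_r$ the homogeneous scalar linear parabolic equation $w_t-\divo(a_z(t,Dv)Dw)=0$ with bounded measurable, uniformly elliptic coefficients; classical interior regularity for such equations (local boundedness and H\"older continuity of solutions, of De Giorgi--Nash--Moser type, applied to $\pm w$) then yields, for every $\sigma\in(0,1)$,
\[
\mean{Q_{\sigma r}}\big|D_\xi v-(D_\xi v)_{Q_{\sigma r}}\big|\, dx\, dt \ \le\ c\,\sigma^{\alpha}\mean{Q_r}\big|D_\xi v-(D_\xi v)_{Q_r}\big|\, dx\, dt
\]
together with $\sup_{Q_{r/2}}|D_\xi v|\le c\,\mean{Q_r}(|D_\xi v|+s)\, dx\, dt$, where $\alpha\in(0,1)$ and $c$ depend only on $n,\nu,L$. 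It is essential that this is a statement about the single component $w$, which is available precisely because $w$ is a scalar sub- and supersolution; for the full gradient in Theorem \ref{mainp} one must instead invoke $C^{1,\alpha}$-type estimates for the underlying homogeneous parabolic problem.

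Finally one iterates. Choose $\delta\in(0,1)$, depending only on $n,\nu,L$, and set $r_i:=\delta^i R$, $Q_i:=Q_{r_i}$. Writing the excess as $E(g,Q):=\mean{Q}|g-(g)_Q|$ and combining the two ingredients through $E(D_\xi u,Q_{i+1})\le E(D_\xi v,Q_{i+1})+2\mean{Q_{i+1}}|D_\xi(u-v)|$ (with $v$ built on $Q_i$) produces a recursion
\[
E(D_\xi u,Q_{i+1}) \ \le\ c\,\delta^{\alpha}E(D_\xi u,Q_i) \ +\ c\,\delta^{-N}\,\frac{|\mu|(Q_i)}{r_i^{N-1}}\,;
\]
fixing $\delta$ so small that $c\,\delta^{\alpha}\le \tfrac12$ and summing (a standard discrete iteration lemma) gives
\[
\sum_{i\ge 0}E(D_\xi u,Q_i) \ \le\ c\,\mean{Q_0}(|D_\xi u|+s)\, dx\, dt \ +\ c\sum_{i\ge 0}\frac{|\mu|(Q_i)}{r_i^{N-1}} \ \le\ c\,\mean{Q_0}(|D_\xi u|+s)\, dx\, dt \ +\ c\,{\bf I}_1^{\mu}(x_0,t_0;2R)\,,
\]
the last step being the elementary comparison of the dyadic sum with the integral defining the caloric potential in \rif{calpot}. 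Since $Du\in C^0(\Omega_T)$, the point $(x_0,t_0)$ is a Lebesgue point of $D_\xi u$, hence $|D_\xi u(x_0,t_0)-(D_\xi u)_{Q_0}|\le \sum_{i\ge0}|(D_\xi u)_{Q_{i+1}}-(D_\xi u)_{Q_i}|\le c\sum_{i\ge0}E(D_\xi u,Q_i)$, and adding $|(D_\xi u)_{Q_0}|\le \mean{Q_0}|D_\xi u|\, dx\, dt$ yields \rif{parest1cc}. The main obstacle, as indicated, is the parabolic gradient comparison estimate with measure data (handled by truncation at the exponent $p=2$) and arranging the iteration so that the error sum telescopes exactly into ${\bf I}_1^{\mu}$; no intrinsic-geometry difficulties arise here because $p=2$ and the standard backward cylinders suffice.
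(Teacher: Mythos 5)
Your proposal is correct and follows essentially the same route as the paper: the measure-data gradient comparison estimate obtained by truncation (the paper's Lemma \ref{comp-lem0}), the componentwise De Giorgi--Nash--Moser oscillation decay for $D_\xi v$ viewed as a solution of the differentiated linear equation with measurable coefficients (the paper's \rif{compo}, derived from Proposition \ref{decbase}), and the dyadic excess-decay iteration whose error sum telescopes into ${\bf I}_1^\mu$, concluded at $(x_0,t_0)$ via the continuity of $Du$. The only cosmetic difference is your opening remark that $D_\xi u$ itself solves an equation with datum $D_\xi\mu$, which is not needed (and would be delicate for measure data); the argument you actually run avoids it, exactly as the paper does.
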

Estimates \rif{parest1}-\rif{parest1cc} also hold for general weak and very weak solutions, and in particular for solutions to measure data problems as
\begin{equation}\label{parpro}
    \left\{
    \begin{array}{cc}
    u_t-\divo \, a(x,t,Du)=\mu&\mbox{in $ \Omega_T$}\\[3pt]
    u=0&\mbox{on $\partial_{\rm par}\Omega_T$\,,}
    \end{array}
    \right.
\end{equation}
where $\mu$ is a general Radon measure on $\Omega_T$ with finite mass on $\Omega_T$, that we shall consider to be defined in the whole $\er^{n+1}$. See next section for the definition of solutions. In the spirit of the elliptic result \rif{minimalcri} we have the following implication, which provides a boundedness criteria for the spatial gradient, under the Dini-continuity assumption for the spatial coefficients \rif{intdini}:
\eqn{minasspar}
$${\bf I}_{1}^\mu(\cdot;R) \in L^{\infty}(\Omega_T), \ \mbox{for some}\  R>0 \Longrightarrow Du \in L^{\infty}_{\loc}(\Omega_T, \er^n)\,.$$
We conclude with the zero order potential estimate, which applies to general equations of the type \rif{basicpar} when considered with a measurable dependence upon the coefficients $(x,t)$. The relevant hypotheses in this case are the following standard growth and monotonicity properties:
\begin{equation}\label{par2}
    \left\{
    \begin{array}{c}
 |a(x,t,z)|\leq L(|z|+s) \\[4pt]
    \nu|z_2-z_1|^2 \leq \langle a(x,t,z_2)-a(x,t,z_1), z_2-z_1\rangle
    \end{array}
    \right.
\end{equation}
which are assumed to hold whenever $(x,t)\in \Omega_T$ and $z,z_1,z_2 \in \er^n$. In particular, since the pointwise bound will be derived on $u$, rather than on $Du$, we do not need differentiability assumptions for $a(\cdot)$ with respect to the spatial gradient variable $z$.
\begin{theorem}[Zero order estimate]\label{mainp2} Under the assumptions \trif{par2}, let $$u \in  L^2(-T, 0;W^{1,2}(\Omega))\cap C^0(\Omega_T)$$ be a weak solution to \trif{basicpar} with $\mu \in L^2(\Omega_T)$. Then there exists a constant $c$, depending only on $n,\nu, L, L_1$, such that the following inequality holds whenever $Q(x_0,t_0; 2R)\subseteq \Omega$ is a backward parabolic cylinder with vertex at $(x_0,t_0)$:
\eqn{lastpest}
$$
|u(x_0,t_0)| \leq c \mean{Q(x_0,t_0;R)}(|u|+Rs)\, dx\, dt  +  c{\bf I}_{2}^\mu(x_0,t_0;2R)  \,.
$$
\end{theorem}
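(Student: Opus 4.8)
The plan is to transplant the Kilpel\"ainen--Mal\'y iteration into the parabolic, $p=2$-growth setting, so that the caloric potential ${\bf I}_2^\mu$ arises exactly as the summation of the dyadic densities $R^{-(N-2)}|\mu|(Q(x_0,t_0;R))$, $N=n+2$. After a translation we may take $(x_0,t_0)=(0,0)$; throughout, $s$ is carried along merely as a lower order perturbation. Put $R_i:=R/2^i$ and $Q_i:=Q(0,0;R_i)$, so $Q_i\subseteq Q_0=Q(0,0;R)\subseteq Q(0,0;2R)$. Since $u\in C^0(\Omega_T)$ one has $u(0,0)=\lim_i\mean{Q_i}u\,dx\,dt$, and with $E_i:=\mean{Q_i}|u-(u)_{Q_i}|\,dx\,dt+R_is$ a telescoping of the averages $(u)_{Q_i}$ yields
\[
|u(0,0)|\ \le\ \mean{Q_0}|u|\,dx\,dt\ +\ 2^{N}\sum_{i\ge0}E_i,
\]
so the whole of \trif{lastpest} reduces to a summability estimate for $\{E_i\}$. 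If ${\bf I}_2^\mu(0,0;2R)=\infty$ there is nothing to prove, so we assume it finite; the elementary comparison of a series with an integral then gives $\sum_{i\ge0}R_i^{-(N-2)}|\mu|(Q_i)\le c(N)\,{\bf I}_2^\mu(0,0;2R)$, the doubling of the radius being needed precisely to absorb the $i=0$ term.

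The first ingredient is a comparison estimate at every scale. For fixed $i$ let $v_i$ be the unique solution of the homogeneous Cauchy--Dirichlet problem $\partial_t v_i-\divo\, a(x,t,Dv_i)=0$ in $Q_i$, $v_i=u$ on $\partial_{\rm par}Q_i$; existence, uniqueness and $v_i\in C^0([-R_i^2,0];L^2(B_{R_i}))$ follow from the theory of monotone operators, since by \trif{par2} the field $a(x,t,\cdot)$ is continuous, monotone and of linear growth and $u\in L^2(-T,0;W^{1,2}(\Omega))$. Testing the equation satisfied by $w_i:=u-v_i$ with the truncation $T_k(w_i)$, using the monotonicity in \trif{par2} and discarding the nonnegative final-time term $\int\Theta_k(w_i(\cdot,0))\,dx$ (with $\Theta_k$ a primitive of $T_k$) together with the vanishing initial term, gives
\[
\sup_{t}\int_{B_{R_i}}\Theta_k(w_i(\cdot,t))\,dx\ +\ \nu\int\int_{\{|w_i|<k\}}|Dw_i|^2\,dx\,dt\ \le\ k\,|\mu|(Q_i);
\]
combining these two contributions via the parabolic Gagliardo--Nirenberg inequality and optimizing in $k$ (the Boccardo--Gallou\"et truncation argument) produces the scale-invariant bound $\mean{Q_i}|u-v_i|\,dx\,dt\le c\,R_i^{-(N-2)}|\mu|(Q_i)=:c\,k_i$, with $c=c(n,\nu)$.

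The second ingredient is the regularity of the $v_i$. By \trif{par2} both $v_i$ and $v_i$ minus any constant lie in a parabolic De Giorgi class with forcing of order $s^2$, so De Giorgi--Nash--Moser theory --- valid up to the top face $B_{R_i}\times\{0\}$, whose centre is the vertex $(0,0)$ --- yields, for $\tau\le\tfrac12$, the mean oscillation decay $\osc_{Q(0,0;\tau R_i)}v_i\le c\,\tau^\alpha\big(\mean{Q_i}|v_i-(v_i)_{Q_i}|\,dx\,dt+R_is\big)$ with $\alpha\equiv\alpha(n,\nu,L)\in(0,1)$; the passage from the natural $L^2$-averages to $L^1$-averages is effected by the usual interpolation valid for solutions, and is indispensable since $(n+2)/n\le2$ rules out an $L^2$ version of the comparison estimate.

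The iteration now goes as follows, the decisive point being to estimate only the \emph{oscillation} --- never the size --- of $v_i$. Using $\mean{Q_i}|v_i-(v_i)_{Q_i}|\le 2\mean{Q_i}|u-v_i|+2\mean{Q_i}|u-(u)_{Q_i}|\le c\,(k_i+E_i)$, one obtains, jumping $j_0$ scales at once,
\[
\mean{Q_{i+j_0}}|u-v_i(0,0)|\,dx\,dt\ \le\ 2^{Nj_0}\mean{Q_i}|u-v_i|\,dx\,dt+\osc_{Q_{i+j_0}}v_i\ \le\ c\,2^{Nj_0}k_i+c\,2^{-\alpha j_0}(k_i+E_i).
\]
Since $v_i(0,0)$ is a constant while $(u)_{Q_{i+j_0}}$ is the $L^1$-best one, $E_{i+j_0}\le 2\mean{Q_{i+j_0}}|u-v_i(0,0)|\,dx\,dt+R_{i+j_0}s$, and choosing $j_0\equiv j_0(n,\nu,L)$ so large that $c\,2^{-\alpha j_0}\le\tfrac14$ we reach the recursion $E_{i+j_0}\le\tfrac12 E_i+c\,(k_i+R_is)$. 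Summing over the residue classes modulo $j_0$, and using $\sum_i R_is\le 2Rs$ and $E_r\le c(n,\nu,L)\big(\mean{Q_0}|u|\,dx\,dt+Rs\big)$ for $r<j_0$, gives $\sum_i E_i\le c\big(\mean{Q_0}|u|\,dx\,dt+Rs\big)+c\,{\bf I}_2^\mu(0,0;2R)$; inserting this into the telescoping inequality above yields \trif{lastpest}, with a constant depending only on $n,\nu,L$. I expect the comparison estimate to be the real obstacle: one has to treat the time derivative rigorously (Steklov averaging, or the fact that $\partial_t w_i\in L^2(-T,0;W^{-1,2})+L^2$, making $T_k(w_i)$ an admissible test function) and, more delicately, to extract the scale-correct $L^1$-bound --- which genuinely requires the parabolic Sobolev inequality coupled with the truncation optimization, not merely a bare energy inequality.
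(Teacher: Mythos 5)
Your argument is correct and follows essentially the same route as the paper: an $L^1$ comparison with the homogeneous caloric solution at each scale (obtained by the Boccardo--Gallou\"et truncation test, as in Lemma \ref{comp-lem0}), the Nash--Moser oscillation decay of Proposition \ref{decbase} applied up to the vertex, a dyadic excess-decay recursion, and the summation of the densities $R_i^{-(N-2)}|\mu|(Q_i)$ into ${\bf I}_2^\mu(x_0,t_0;2R)$. The only cosmetic differences are that you jump $j_0$ scales and sum over residue classes where the paper fixes the step $1/(2H)$ once, and that the paper deduces the $u$-comparison \trif{comparison2} from the gradient comparison via Poincar\'e rather than directly from the truncation estimate.
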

\begin{remark}\label{parpot} The caloric Riesz potential defined in \rif{parr} is different from the parabolic Riesz potential considered for instance  in \cite{AdBa} via convolution of $\mu$ with the heat kernel
\eqn{kuusi}
$$
\Gamma_{\beta}(x,t):= \frac{1}{t^{\frac{N-\beta}{2}}}\exp\left(-\frac{|x|^2}{4t}\right) \qquad \qquad \beta \in (0,N]\,.
$$
On the other hand both the kernels considered exhibit the same scaling with respect to the parabolic dilation $R \to (Rx, R^2t)$, which is in turn the relevant property to determine the regularization properties of the related convolution operator. Therefore, as we shall see from Section \ref{parri} below, we have that the caloric Riesz potential \rif{parr} is perfectly sufficient to infer the sharp regularity/integrability properties of solutions. In particular, from \rif{parest1} we shall be able to derive a borderline Marcinkievicz estimate which seems to be difficult to derive using for instance the truncation methods from \cite{Bmany}. Anyway, for estimates regarding \rif{kuusi} see \cite{kuusim}.
\end{remark}
\subsection{Plan of the paper} After establishing in Section 2 some notation, Section \ref{ellipticse} is dedicated to the proof of Theorem \ref{mainx}. This will require a careful combination of regularity estimates for $p$-harmonic functions, together with suitable comparison estimates which make the density of the Wolff potential appear; all such ingredients will be finally combined in a suitable iteration scheme. The same scheme will be followed in Section \ref{paraproof} for the parabolic case, where additional difficulties come into the play; in particular, the proof of the comparison estimates necessitates further delicate technicalities, while some precise estimates from Nash-Moser's theory will be needed. In Section 5 we show how the everywhere pointwise estimates derived for a priori regular solutions, actually extend to all kinds of general weak solutions, including solutions to measure data problems, which are the most general ones. Finally, in Section 6 we demonstrate how the pointwise estimates derived here allows to recast essentially all the main gradient estimates for non-homogeneous problems as described at the beginning of this Introduction, and in particular those in finer scales of spaces such as Lorentz or Orlicz spaces. We re-emphasize here that, when considering parabolic problems, the techniques presented here are the only one available for getting such estimates. The techniques presented in this paper are general enough to be applied in different contexts; an example is \cite{BH}, where problems with non-standard growth of $p(x)$-type - see for instance \cite{AMpx} - are considered. 

Some of the results here have been announced in the {\em nota lincea} \cite{DMnote}.

{\bf Acknowledgments.} This research is supported by the ERC grant 207573 ``Vectorial Problems", and by MIUR via the national project \ap Calcolo delle Variazioni". We also thank Verena B\"ogelein who carefully read a draft of the paper.
\section{Notations}
In this paper we follow the usual convention of denoting by $c$ a general constant larger (or equal) than one, possibly varying from line to line; special occurrences will be denoted by $c_1$ etc; relevant dependence on parameters will be emphasized using parentheses. We shall denote in a standard way $$B(x_0,R):=\{x \in \er^n \, : \,  |x-x_0|< R\}$$ the open ball with center $x_0$ and radius $R>0$, while backward parabolic cylinders have been already defined in \rif{caloricc}; when not important, or clear from the context, we shall omit denoting the center as follows: $B_R \equiv B(x_0,R)$, and the same will happen for parabolic cylinders concerning the vertex. Moreover, when more than one ball - resp. cylinder - will come into the play, they will always share the same center - resp. vertex - unless otherwise stated. We shall also denote $B \equiv B_1 = B(0,1)$, and $Q\equiv Q_1 = B_1 \times (-1,0)$. We recall that, given a cylindrical domain of the type $C=A \times (t_1, t_2)$ - and in particular a parabolic cylinder - its parabolic boundary $\partial_{\rm par} C$ is given by $\partial C \setminus (A \times \{t_2\})$. With $A$ being a measurable subset with positive measure, and with $g \colon A \to \er^k$ being a measurable map, we shall denote
 $$
    \mean{A}  g(x) \, dx  := \frac{1}{|A|}\int_{A}  g(x) \, dx
$$
its average. In this paper, all the measures considered will be Radon measure with finite total mass, for convenience assumed to be defined on the whole space: $\er^n$ for elliptic problems, $\er^{n+1}$ for parabolic ones; with $A\subset \er^n$ or $A\subset \er^{n+1}$, we shall denote by $\MM(A)$ the space of all Radon measures with finite total mass defined on $A$.
When a measure $\mu$ actually turns out to be an $L^1$-function, we shall use the standard notation, whenever $A$ is a measurable set on which $\mu$ is defined
$$
|\mu|(A):=\int_{A} |\mu(x)|\, dx\,.
$$
When dealing with general measure data problems as in \rif{baseq} and \rif{basicpar}, we shall consider so called very weak solutions, i.e. those solutions who are not necessarily lying in the natural energy spaces associated to such equations growth assumptions \rif{asp}$_1$ and \rif{par1}$_1$ being given - specifically $W^{1,p}(\Omega)$ and $L^2(-T,0;W^{1,2}(\Omega))$, respectively - but that are nevertheless integrable enough to allow for a  usual distributional formulation. Such solutions are not necessarily unique, but suitable reinforced notions of solutions can be considered as well, in order to achieve uniqueness in several cases; see Section 5 below. In the elliptic case a (very) weak solution to \rif{baseq} is a function $u \in W^{1,p-1}(\Omega)$ such that the distributional relation
$$
\int_\Omega \langle a(x,Du), D\varphi\rangle\, dx =  \int_\Omega \varphi\, d \mu
$$
holds whenever $\varphi \in C^\infty_0(\Omega)$ has compact support in $\Omega$. In the parabolic case a (very) weak solution is a function $u \in L^1(-T, 0; W^{1,1}(\Omega))$ such that
\eqn{weakp}
$$
-\int_{\Omega_T} u \varphi_t \, dx \, dt + \int_{\Omega_T} \langle a(x,t,Du), D\varphi\rangle\, dx\, dt =  \int_{\Omega_T} \varphi\, d \mu
$$
holds whenever $\varphi \in C^\infty_0(\Omega_T)$ has compact support in $\Omega_T$. When referring to Cauchy-Dirichlet problems of the type \rif{parpro}, while the lateral boundary condition can be formulated by prescribing the inclusion $u \in L^1(-T, 0; W^{1,1}_0(\Omega))$, the initial boundary one $u(x,-T)\equiv 0$ is understood in the $L^1$-sense, that is prescribing that $$
    \lim_{h\downarrow 0} \frac{1}{h}\int_{-T}^{-T+h} \int_\Omega |u(x,t)|\, dx \, dt=0.
$$
In the parabolic setting a convenient \ap slicewise" reformulation of \rif{weakp} is given by mean of so-called Steklov averages; in fact, for $h >0$ and $t \in [-T, 0)$ we define
\eqn{steky}
$$
u_h(x,t):= \left\{
\begin{array}{ccc}
\displaystyle \frac{1}{h}\int_{t}^{t+h}  u(x,\tilde t)\, d\tilde t  & \mbox{if} & \ t+ h < 0 \\[3 pt]
0  & \mbox{if} & \ t+ h > 0\,.
\end{array}\right.
$$
With such a notation we have - see \cite[Chapter 2]{D} - that when $\mu \in L^1(\Omega_T)$ the slicewise equality
\begin{equation}\label{steky2}
    \int_{\Omega}\Big( \partial_tu_h\varphi +\langle [a(\cdot ,t,Du)]_h,D\varphi\rangle\Big)\, dx=\int_{B}\varphi\mu_h\, dx
   \end{equation}
holds whenever $\varphi \in C^\infty_0(\Omega)$ has compact support in $\Omega$, and for a.e. $t \in (-T, 0)$. As the reader will recognize we use both the notations $w_t$ and $\partial_t w$ for the time derivative of a function $w$.

\section{Elliptic estimates}\label{ellipticse}
This section contains the proof of estimate \rif{mainestx}, therefore we shall argue under the assumptions of Theorem \ref{mainx}, while $u  \in C^1(\Omega)$ denotes the solution identified in Theorem \ref{mainx}.
\subsection{Basic preliminaries.} Let us recall a few basic strict monotonicity properties of the vector field $z \mapsto a(\cdot, z)$ under the assumption \rif{asp}$_2$; for an absolute constant $c\equiv c(n,p,\ratio)\geq 1$ the inequality
With $s \geq
0$ being the number appearing in \rif{asp}, we define \eqn{mon1}
$$
c^{-1}(|z_1|^2+|z_2|^2 +s^2)^{\frac{p-2}{2}}|z_2-z_1|^2\leq
\langle a(x,z_2)-a(x,z_1), z_2-z_1\rangle
$$
holds whenever $z_1,z_2, \er^n$ and $x\in \Omega$. In particular since $p\geq 2$ the previous inequality implies
Indeed using \rif{asp}$_2$ is standard to see that the inequality
\eqn{mon3}
$$
c^{-1}|z_2-z_1|^p\leq
\langle a(x,z_2)-a(x,z_1), z_2-z_1\rangle \,.
$$
The next result on Reverse H\"older type inequalities will be useful in the sequel.
\begin{lemma}\label{revq}
Let $g : A \to \er^k$ be a integrable map such that
$$
\left(\mean{B_R} |g|^{\chi_0} \, dx \right)^{\frac{1}{\chi_0}}\leq c
\mean{B_{2R}} |g| \, dx
$$
holds whenever $B_{2R} \subseteq A$, where $A \subseteq
\er^n$ is an open subset, and $\chi_0>1$, $c \geq
0$.
Then, for every $t \in (0,1]$ and $\chi \in (0,\chi_0]$ there exists a constant $c_0 \equiv
c_0(n,c,t)$ such that, for every $B_{2R} \Subset A$ it holds that
\eqn{valida}
$$
\left(\mean{B_R} |g|^{\chi} \, dx \right)^{\frac{1}{\chi}}\leq
c_0 \left(\mean{B_{2R}}
|g|^t \, dx \right)^{\frac{1}{t}} \;.
$$
The assertion also holds when $A \subseteq
 \er^{n+1}$, considering backward parabolic cylinders of the type in \trif{caloricc}, instead of standard balls.
\end{lemma}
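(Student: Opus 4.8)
The plan is to reduce everything to the case $\chi=\chi_0$ and then to upgrade the $L^1$–norm on the right‑hand side of the hypothesis to an $L^t$–norm, $t\in(0,1)$, by combining a Vitali‑type covering argument with an elementary interpolation inequality, Young's inequality, and the standard iteration (\ap hole‑filling") lemma. Indeed, since $\chi\le\chi_0$, Jensen's inequality gives $(\mean{B_R}|g|^{\chi}\,dx)^{1/\chi}\le(\mean{B_R}|g|^{\chi_0}\,dx)^{1/\chi_0}$, so it suffices to prove \rif{valida} for $\chi=\chi_0$; for $t=1$ this is the hypothesis, hence we may assume $0<t<1$.

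The first real step is to derive, for $B_{2R}\Subset A$ and $R\le r<s\le 2R$, a \emph{flexible‑radii} version of the assumption. To this end one covers $B_r$ by finitely many balls $B(x_i,(s-r)/2)$ with $x_i\in B_r$ and finite overlap (with a constant depending only on $n$); since then $B(x_i,s-r)\subseteq B_s\subseteq B_{2R}\Subset A$, the assumed reverse inequality applies on each such ball, and summing over $i$ — the number of balls being at most $c(n)(R/(s-r))^n$ — one obtains
\[
\int_{B_r}|g|^{\chi_0}\,dx\le c\,\frac{R^n}{(s-r)^{n\chi_0}}\Big(\int_{B_s}|g|\,dx\Big)^{\chi_0},\qquad c=c(n,c).
\]
The same covering, applied once to $B_{2R}$ with fixed small radii, also shows $\int_{B_{2R}}|g|^{\chi_0}\,dx<\infty$, which will be needed to run the iteration below.

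Next one interpolates. Pick $\kappa=(\chi_0-1)/(\chi_0-t)\in(0,1)$, so that $\kappa t+(1-\kappa)\chi_0=1$ and hence $(1-\kappa)\chi_0=1-\kappa t<1$; since $|g|=(|g|^t)^\kappa(|g|^{\chi_0})^{1-\kappa}$ pointwise, Hölder's inequality gives $\int_{B_s}|g|\le(\int_{B_s}|g|^t)^\kappa(\int_{B_s}|g|^{\chi_0})^{1-\kappa}$. Writing $\Phi(\tau):=\int_{B_\tau}|g|^{\chi_0}\,dx$ and bounding $\int_{B_s}|g|^t\le\int_{B_{2R}}|g|^t$, the previous display becomes
\[
\Phi(r)\le c\,\frac{R^n}{(s-r)^{n\chi_0}}\Big(\int_{B_{2R}}|g|^t\,dx\Big)^{\kappa\chi_0}\,\Phi(s)^{1-\kappa t}.
\]
As $1-\kappa t<1$, Young's inequality yields, for all $R\le r<s\le 2R$,
\[
\Phi(r)\le\tfrac12\,\Phi(s)+\frac{c\,R^{n/(\kappa t)}}{(s-r)^{n\chi_0/(\kappa t)}}\Big(\int_{B_{2R}}|g|^t\,dx\Big)^{\chi_0/t},\qquad c=c(n,c,\chi_0,t),
\]
and the standard iteration lemma on $[R,2R]$ then gives $\Phi(R)\le c\,R^{n(1-\chi_0)/(\kappa t)}(\int_{B_{2R}}|g|^t\,dx)^{\chi_0/t}$. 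Rewriting everything in terms of averages and using $\kappa t=t(\chi_0-1)/(\chi_0-t)$, a short computation shows that all powers of $R$ cancel (as they must by scaling), leaving exactly \rif{valida} for $\chi=\chi_0$ with $c_0=c_0(n,c,\chi_0,t)$; by the first reduction this settles all $\chi\in(0,\chi_0]$. The parabolic statement is obtained by repeating the argument verbatim, with the backward cylinders \rif{caloricc} in place of balls, a Vitali covering adapted to the parabolic metric $d_{\rm par}$, and $N=n+2$ replacing $n$ in the volume scaling $|B_R|\simeq R^n$.

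No step here is deep — this is the classical self‑improving argument for reverse Hölder inequalities. The only things requiring care are the exponent bookkeeping in the last step, where one must check that the $R$‑powers really cancel after passing to normalized integrals, and the mild technical point that the iteration lemma needs $\Phi$ finite on the \emph{closed} interval $[R,2R]$ — which is exactly why the compact containment $B_{2R}\Subset A$, rather than merely $B_{2R}\subseteq A$, is assumed in the conclusion.
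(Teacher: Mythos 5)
Your proof is correct, and it is the canonical argument: the paper actually states Lemma \ref{revq} without any proof (it is invoked as a standard self-improving property of reverse H\"older inequalities), so there is no ``paper route'' to compare against. I checked the two places where such arguments usually go wrong and both are fine: the interpolation exponent $\kappa=(\chi_0-1)/(\chi_0-t)$ does satisfy $\kappa t+(1-\kappa)\chi_0=1$ with $1-\kappa t=\chi_0(1-t)/(\chi_0-t)<1$ precisely because $\chi_0>1$, and in the final normalization the identity $(\chi_0-1)/\kappa=\chi_0-t$ makes the powers of $R$ cancel exactly, as dictated by scaling. Two cosmetic remarks: the constant you produce depends on $\chi_0$ as well as on $n,c,t$ (the lemma's statement suppresses this, but $\chi_0$ is a structural constant in all applications, so nothing is lost), and in the covering step you only need the \emph{count} of the small balls, not their finite overlap, since you sum the left-hand sides over a cover.
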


\subsection{A decay estimate below the natural growth exponent.}\label{fre}
Here we prove a decay estimate for solutions to certain homogeneous equations which differs from more standard ones in that the exponents involved are smaller than those typically used - the \ap natural growth exponents"; this estimate could be of its own interest. Let us consider an energy solution $v \in W^{1,p}(A)$ to the homogeneous equation
\eqn{hom}
$$
\divo\, a(x_0, Dv)=0\,,
$$
where $x_0 \in \Omega$ and $A\subseteq
 \Omega$ is a sub-domain of $\Omega$, in other words we consider the vector field from Theorem \ref{mainx} \ap frozen" at a point. 
\begin{lemma}\label{bbbb}Let $w \in W^{1,2}(A)$ be a solution of the linear equation of the type $$\divo (\tta(x)Dw)=0\,,$$ where the matrix $\tta(x)$ has measurable entries and satisfies the following elipticity and growth bounds, for every $\lambda \in \er^n$:
$$
c_*|\lambda|^2 \leq \langle \tta(x)\lambda, \lambda\rangle\,, \qquad |\tta(x)|\leq c_{**}\,, \qquad \mbox{for some}\ c_* \in (0,1)\ \mbox{and}\  c_{**}\geq 1\,.
$$
Then there exists constants $c \geq 1$ and $\beta_0 \in (0,1]$, both depending only on $n,c_{**}/c_*$, such that the following estimate holds whenever $B_\varrho\subset B_R \subseteq A$ are concentric balls:
\eqn{sottosti0}
$$
\mean{B_\varrho} |w-(w)_{B_{\varrho}}| \,dx\leq c \left(\frac{\varrho}{R}\right)^{\beta_0}\! \mean{B_R} |w-(w)_{B_{R}}|\, dx\,.
$$
\end{lemma}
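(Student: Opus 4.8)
The plan is to reduce \rif{sottosti0} to two classical facts from De Giorgi--Nash--Moser theory for linear, uniformly elliptic, divergence-form equations with merely measurable coefficients, both of which depend quantitatively only on $n$ and on the ellipticity ratio $c_{**}/c_*$ (and not on $c_*, c_{**}$ separately, by the obvious invariance of the equation under $\tta \mapsto c_*^{-1}\tta$, which normalizes the lower bound to $1$ and the upper bound to $c_{**}/c_*$). The first ingredient is the \emph{oscillation decay}: there exist $\beta_0 \in (0,1]$ and $c\ge 1$, depending only on $n$ and $c_{**}/c_*$, such that
\begin{equation*}
\osc_{B_\varrho} w \,\le\, c\left(\frac{\varrho}{r}\right)^{\beta_0}\osc_{B_r} w\qquad\text{whenever } B_\varrho \subset B_r \subseteq A,
\end{equation*}
which one obtains by iterating De Giorgi's oscillation reduction lemma $\osc_{B_{r/2}} w \le \lambda\, \osc_{B_r} w$, with $\lambda=\lambda(n,c_{**}/c_*)<1$, and setting $\beta_0:=\log_2(1/\lambda)$. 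The second ingredient is the \emph{interior local boundedness with an $L^1$ datum}: since $\tta$ acts linearly, $w-(w)_{B_r}$ is again a solution and $|w-(w)_{B_r}|$ a subsolution, so Moser's iteration --- in the form valid for every exponent $q>0$, which one may also reach from the $L^2$-version via the exponent-lowering Lemma \ref{revq} applied to $w-(w)_{B_r}$ --- yields
\begin{equation*}
\sup_{B_{r/2}}\bigl|w-(w)_{B_r}\bigr|\,\le\, c\mean{B_r}\bigl|w-(w)_{B_r}\bigr|\,dx, \qquad c=c(n,c_{**}/c_*).
\end{equation*}

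Granting these, the proof is a short combination. If $R/2\le\varrho\le R$ the bound is immediate, since $\mean{B_\varrho}|w-(w)_{B_\varrho}|\le 2\mean{B_\varrho}|w-(w)_{B_R}|\le 2(R/\varrho)^n\mean{B_R}|w-(w)_{B_R}|\le 2^{n+2}(\varrho/R)^{\beta_0}\mean{B_R}|w-(w)_{B_R}|$, using $\beta_0\le 1$. For $\varrho<R/2$ I would chain the two ingredients through the intermediate ball $B_{R/2}$:
\begin{equation*}
\mean{B_\varrho}\bigl|w-(w)_{B_\varrho}\bigr|\,dx \,\le\, \osc_{B_\varrho} w \,\le\, c\left(\frac{\varrho}{R}\right)^{\beta_0}\osc_{B_{R/2}} w \,\le\, c\left(\frac{\varrho}{R}\right)^{\beta_0}\mean{B_R}\bigl|w-(w)_{B_R}\bigr|\,dx,
\end{equation*}
where the first inequality is elementary (the average of $w$ over $B_\varrho$ lies between $\inf_{B_\varrho}w$ and $\sup_{B_\varrho}w$), the second is the oscillation decay applied on $B_{R/2}\subseteq A$ (the factor $2^{\beta_0}$ being absorbed into $c$), and in the third one uses $\osc_{B_{R/2}} w \le 2\sup_{B_{R/2}}|w-(w)_{B_R}|$ together with the local sup-bound taken with $r=R$. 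Choosing the final constant to be the larger of the two obtained in the two regimes gives \rif{sottosti0}.

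There is no genuine obstacle here: the substance sits entirely in the two quoted De Giorgi--Nash--Moser estimates, which are classical and whose constants have exactly the required dependence. The only point deserving some care is that one must invoke the \emph{interior} sup-bound with an $L^1$-average on the right-hand side (rather than the more commonly stated $L^2$-average), obtained either by running Moser's iteration down to an arbitrarily small exponent, or --- staying closer to the machinery already prepared in this paper --- by feeding the $L^2$ sup-bound into the self-improving mechanism of Lemma \ref{revq}; either way the constants remain of the asserted form $c(n,c_{**}/c_*)$.
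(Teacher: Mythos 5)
Your argument is correct and is essentially the paper's own: the proof of Lemma \ref{bbbb} given in the paper consists of a citation to classical De Giorgi theory (\cite[Corollary 1.5]{Lieb2}, or the proof of \cite[Theorem 7.7]{G}), and your two ingredients --- the oscillation decay with exponent $\beta_0(n,c_{**}/c_*)$ and the interior sup-bound with an $L^1$-average on the right --- are exactly the content of those references, chained through $B_{R/2}$ just as one would. The only cosmetic caveat is that Lemma \ref{revq} as literally stated (finite exponent $\chi_0$ on the left of its hypothesis) does not directly upgrade the $L^2$ sup-bound to the $L^1$ one, but your primary route, Moser/De Giorgi iteration run down to an arbitrarily small exponent (or the standard absorption argument over nested radii), is fully valid and has constants of the asserted form.
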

\begin{proof} This result is a rather standard consequence of DeGiorgi's theory for linear elliptic equations. The statement follows directly from \cite[Corollary 1,5]{Lieb2} or by the proof of \cite[Theorem 7.7]{G}, see also \cite[(7.45)]{G}, taking into account the specific equations we are dealing with; in particular we may take $\chi=0$  in \cite{G}.
\end{proof}
\begin{theorem}\label{sotto} Let $v \in W^{1,p}(A)$ be a weak solution to \trif{hom} under the assumptions \trif{asp}. Then there exist constants $\beta \in (0,1]$ and $c\geq 1$, both depending only on $n,p,\ratio$,
such that the estimate
\eqn{sottosti}
$$
\mean{B_\varrho} |D v-(D v)_{B_{\varrho}}| \,dx\leq c \left(\frac{\varrho}{R}\right)^{\beta}\! \mean{B_R} |D v-(D v)_{B_{R}}|\, dx
$$
holds whenever $B_{\varrho}\subseteq B_{R}\subseteq A$ are concentric balls.
\end{theorem}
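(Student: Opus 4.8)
Since $p\ge2$, estimate \rif{sottosti} is nothing but the excess-decay form of the classical $C^{1,\beta}$-regularity theory for homogeneous degenerate equations of $p$-Laplacean type (DiBenedetto, Uhlenbeck, Tolksdorf, Lieberman), here phrased with the $L^1$-norm of the excess $Dv-(Dv)_{B_\varrho}$ on both sides -- \ap below the natural growth exponent" of the equation. The plan is to establish the decay first with natural exponents and then to bring the exponent on the right-hand side down to $1$ by means of the reverse H\"older Lemma \ref{revq}: applied to $(|Dv|+s)$ and to the gradient of the shifted solution $v-(Dv)_{B_R}\cdot x$, this is precisely what converts the standard sup- and oscillation-estimates for solutions of \rif{hom} -- naturally stated with $L^2$- or $L^p$-averages -- into the $L^1$-averages occurring in \rif{sottosti}. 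The genuine analytic content sits in the frozen, homogeneous estimate, which I would organise through the classical DiBenedetto alternative at a fixed ball $B_R\Subset A$: with $\mu_R:=(|(Dv)_{B_R}|^2+s^2)^{1/2}$ and $M:=\mean{B_R}|Dv-(Dv)_{B_R}|\,dx$, and with two small parameters $\tau,\ep_0\in(0,1)$ depending only on $n,p,\ratio$ to be fixed along the way, one distinguishes whether $M\le\ep_0\mu_R$ or $M>\ep_0\mu_R$.

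In the \emph{non-degenerate alternative}, $M\le\ep_0\mu_R$, one first checks that $|Dv|^2+s^2$ is comparable to $\mu_R^2$ throughout $B_{\tau R}$, with constants depending only on the data: the upper bound follows from the De Giorgi/Moser sup-estimate together with $\mean{B_R}(|Dv|+s)\,dx\le c(M+\mu_R)\le c\mu_R$, while the lower bound is trivial when $s\gtrsim\mu_R$ and otherwise follows from $|(Dv)_{B_{\tau R}}|\ge|(Dv)_{B_R}|-\tau^{-n}M\gtrsim\mu_R$ combined with $\osc_{B_{\tau R}}|Dv|\le c\tau^{\alpha}\mu_R$ -- the standard gradient H\"older continuity for solutions of \rif{hom} -- provided $\ep_0$ is small relative to $\tau$ and $\tau$ small relative to the data. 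On $B_{\tau R}$ each partial derivative $w:=D_\xi v$, $\xi\in\{1,\dots,n\}$, is a weak solution of the linear equation $\divo(\tta(x)Dw)=0$ with $\tta(x):=a_z(x_0,Dv(x))$, and by \rif{asp} and the comparability just obtained, $\mu_R^{2-p}\tta(x)$ is uniformly elliptic and bounded with constants depending only on $n,p,\ratio$. Lemma \ref{bbbb} then gives $\mean{B_\varrho}|w-(w)_{B_\varrho}|\,dx\le c(\varrho/R)^{\beta_0}\mean{B_{\tau R}}|w-(w)_{B_{\tau R}}|\,dx$ for $B_\varrho\subseteq B_{\tau R}$; summing over $\xi$ and replacing the average over $B_{\tau R}$ by the one over $B_R$ (absorbing the fixed factor coming from $|(Dv)_{B_R}-(Dv)_{B_{\tau R}}|\le\tau^{-n}M$) yields \rif{sottosti}, first for $\varrho\le\tau R$ and then trivially for $\tau R<\varrho\le R$. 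Note that the plain gradient H\"older estimate is useless here, since its right-hand side carries the full quantity $\mu_R$, not the excess $M$; it is exactly in the non-degenerate regime, where the equation is close to linear, that Lemma \ref{bbbb} produces the sharper, excess-driven decay.

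In the \emph{degenerate alternative}, $M>\ep_0\mu_R$, both $s$ and $|(Dv)_{B_R}|$ are controlled by $M$, so $\mean{B_R}(|Dv|+s)\,dx\le|(Dv)_{B_R}|+M+s\le cM$. One then invokes the classical gradient H\"older-continuity estimate for solutions of \rif{hom} in the form $\osc_{B_\varrho}Dv\le c(\varrho/R)^{\beta}\mean{B_R}(|Dv|+s)\,dx$ for $\varrho\le R$ (the usual $C^{1,\beta}$-estimate, reduced to the $L^1$-average on the right by the sup-estimate and Lemma \ref{revq}), and combines it with the bound above and with $\mean{B_\varrho}|Dv-(Dv)_{B_\varrho}|\,dx\le\osc_{B_\varrho}Dv$ to obtain \rif{sottosti} once more, now directly for all $\varrho\le R$. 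Since the two alternatives are exhaustive and each delivers \rif{sottosti} at the fixed ball $B_R$, no iteration over the scales is needed, and one finally chooses $\beta\in(0,1]$ as the smaller of the two exponents produced.

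I expect the degenerate alternative -- and more precisely the gradient H\"older-continuity input it relies on -- to be the main obstacle. Where $Dv$ vanishes and $s$ is small the linearised equation for $D_\xi v$ loses ellipticity, so Lemma \ref{bbbb} is of no help there, and $\osc Dv$ cannot be controlled through the oscillation of $|Dv|$ alone (the gradient may rotate while keeping its length); establishing the vector oscillation decay in that situation is exactly the deep part of the DiBenedetto--Uhlenbeck--Tolksdorf--Lieberman $C^{1,\alpha}$-theory for degenerate equations, which the dichotomy above isolates in this single alternative but does not remove.
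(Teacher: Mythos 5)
Your argument is correct in outline, but it follows a genuinely different route from the paper's. You work at a \emph{single} scale, with one dichotomy comparing the excess $M=\mean{B_R}|Dv-(Dv)_{B_R}|\,dx$ to the size $\mu_R=(|(Dv)_{B_R}|^2+s^2)^{1/2}$: in the non-degenerate regime you linearize and invoke Lemma \ref{bbbb}, and in the degenerate regime you observe that $\mean{B_R}(|Dv|+s)\,dx\leq cM/\ep_0$, so that the \emph{non-homogeneous-in-the-excess} oscillation estimate $\osc_{B_\varrho}Dv\leq c(\varrho/R)^{\alpha}\mean{B_R}(|Dv|+s)\,dx$ already yields \rif{sottosti}. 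That conversion is sound, and it does eliminate the need for any iteration over scales. The paper instead runs a multi-scale Lieberman-type iteration: its only nonlinear inputs are the sup-bound \rif{ssuu} and the measure-theoretic alternative \rif{dalt}--\rif{uell2} (either $|Dv|\geq M(2r)/4$ on $B_r$, or $M(r)\leq\eta M(2r)$), and the excess decay is then \emph{built} by tracking, scale by scale, whether the run of degenerate scales persists or the non-degenerate regime takes over (Steps 4--8). The trade-off is clear: the paper's proof is longer but starts from more primitive ingredients -- it essentially re-derives the $C^{1,\alpha}$ decay with the excess carried along; your proof is shorter because it cites, as a black box, the full DiBenedetto--Tolksdorf--Lieberman vectorial oscillation estimate for $Dv$, which is precisely the deep content that the paper's Section \ref{fre} reconstructs. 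You acknowledge this honestly in your last paragraph, and since that estimate is a classical theorem (and you use it only in its homogeneous form, with $\mean{B_R}(|Dv|+s)\,dx$ on the right), the citation is legitimate; just be aware that as a self-contained proof of Theorem \ref{sotto} from the paper's own primitives, the main work has been outsourced rather than done. Two small points to tidy up: in the non-degenerate case the lower bound $|Dv(x)|\gtrsim\mu_R$ on $B_{\tau R}$ requires the \emph{vectorial} oscillation of $Dv$, not merely that of $|Dv|$ as written (the standard theory does provide it); and the reverse H\"older step for ``the gradient of the shifted solution'' announced in your plan is neither available (the shifted function does not solve the equation) nor actually needed anywhere in your argument -- only the reverse H\"older inequality for $|Dv|+s$ is used.
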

\begin{proof}
We shall use some iteration techniques \cite{DM, Lieb2, DM2}; in particular, we shall use the ideas in the important paper \cite{Lieb2}. The proof is divided in several steps. In the following $B(x_0,R)\equiv B_R$ is a fixed ball as considered in the statement of the Theorem. We recall that standard regularity gives that $Dv \in L^{\infty}_{\loc}(A)$; moreover without loss of generality we can assume that $B_R \Subset A$ so that in the following we have that $Dv \in L^{\infty}(B_R)$. From now on, for the rest of the proof, all the balls considered will have the same center $x_0$ and $r$ will denote a positive radius such that $r \leq R$. Accordingly, we set
$$
E(r):=\mean{B_r} |D v-(D v)_{B_{r}}| \,dx\,,\qquad \qquad M(r):=\max_{1\leq \xi\leq n} \sup_{B_r} |D_\xi v|\,.
$$
\indent {\em Step 1: Regularization.} In the following we shall consider the non-degenerate case $s>0$, but we shall prove estimates which do not depend on the parameter $s$. The case $s=0$ can be deduced by approximating original solutions by solutions to equations of similar type satisfying $s>0$, via a completely standard approximation procedure; see for instance \cite{df, Manth}.

{\em Step 2: The fundamental alternative.} This goes as follows - see \cite{df, Manth, Lieb2}: With $B_{2r} \subseteq B_{R}$ and $\mu_0 \in (0,1)$, clearly one of the following three alternatives must hold:
\eqn{dalt}
$$\left\{
\begin{array}{cc}
|\{D_\xi v < M(2r)/2\}\cap B_{2r}| \leq \mu_0 |B_{2r}| & \mbox{for some}\ \xi \\[5 pt]
|\{ D_\xi v > - M(2r)/2\}\cap B_{2r}| \leq \mu_0 |B_{2r}| & \mbox{for some}\ \xi \\ [5 pt]
\hspace{-1cm}|\{ D_\xi v < M(2r)/2\}\cap B_{2r}|, & \\ [5 pt]
\qquad \qquad |\{ D_\xi v > - M(2r)/2\}\cap B_{2r}| > \mu_0 |B_{2r}| & \mbox{for all}\ \xi     \,.
\end{array}
\right.
$$
The crucial and well-established regularity property of solutions - see for instance \cite{df, Lieb2, Manth} - is that there exist {\em universal numbers} $\mu_0, \eta \equiv \mu_0, \eta (n,p,\ratio) \in (0,1)$ - i.e. independent of the vector field considered $a(x_0,z)$ and of the solution $v$ -  such that if one of the first two alternatives in \rif{dalt} holds then
\eqn{uell}
$$
|Dv|\geq M(2r)/4 \qquad \qquad  \mbox{in}\ B_{r}
$$
otherwise, if the third possibility from \rif{dalt} occurs, we have that
\eqn{uell2}
$$
M(r)\leq \eta M(2r)\,.
$$
All in all we have that either \rif{uell} or \rif{uell2} happens. The proof now consists of showing that combining these two alternatives \rif{sottosti} follows. We recall that under the present assumptions we have that $v \in W^{2,2}_{\loc}(A)$ and each component $D_\xi v\equiv w$ satisfies the following differentiated equation:
\eqn{diffeqee}
$$
\divo\, (\tta(x)Dw)=0 \qquad \qquad  \qquad \tta_{i,j}(x) := (a_{i})_{z_j}(x_0,Dv(x))\,.
$$
Moreover, standard a priori estimates for $p$-Laplacean type equations as those considered in \rif{hom} provide a constant $c_{g}\equiv c_{g}(n, p, \ratio)$ such that
\eqn{ssuu}
$$
\sup_{B_r} |Dv| \leq c_{g} \mean{B_{2r}} (|Dv| + s) \, dx \qquad \mbox{holds for every}\, \, B_{2r}\subseteq B_{R}\,.
$$
\indent {\em Step 3: The non-degenerate case I.} In this case we assume that $s > 2\sqrt{n}M(r)$ for a certain radius $r\leq R$; then the matrix $\tta(x)$ in \rif{diffeqee} satisfies on $B_r$ the bounds
$$
\nu^{-1}  s^{p-2}|\lambda|^2 \leq \langle \tta(x)\lambda, \lambda\rangle\,, \qquad  |\tta(x)|\leq L2^{p-2}s^{p-2} \qquad \mbox{for every} \ \lambda \in \er^n\,.
$$
Therefore the matrix $\tta(x)$ defined in \rif{diffeqee} satisfies the assumptions of Lemma \ref{bbbb}, that  applied to every component $D_\xi v\equiv w$ of the gradient gives
\eqn{simpledecay}
$$
\mean{B_\varrho} |D v-(D v)_{B_{\varrho}}| \,dx\leq c_{d} \left(\frac{\varrho}{r}\right)^{\beta_0}\! \mean{B_r} |D v-(D v)_{B_{r}}|\, dx
$$
where $c_{d}\geq 1, \beta_0\in (0,1)$ depend only $n,p,\ratio$.

{\em Step 4: Choice of the constants.}
We first take a positive $H_1 \in \en$ such that
\eqn{ch0}
$$8\sqrt{n}c_{g}\eta^{H_1-1} \leq 1\,.$$ This fixes $H_1$ as a quantity depending only on $c_{g}, \eta$ and therefore ultimately on $n,p,\ratio$. In turn we determine another integer $K_1$ such that
\eqn{ch}
$$
 2^{nH_1+2} \eta^{K_1} \leq 1\,,\qquad \qquad c_{d}2^{-K_1\beta_0+nH_1+2}\leq 1
$$
hold. This fixes $K_1$ as a quantity depending on $\eta, H_1, c_{d}$, and therefore ultimately on $n,p,\ratio$.

{\em Step 5: The degenerate case I.} We consider the following situation: there exists a radius $t \leq R$ such that \rif{uell2} happens to hold whenever $r = t/2^{i}$ and $1\leq i \leq H_1 \in \en$, and finally we assume that
both
\eqn{bothh}
$$|(Dv)_{B_t}|\leq 2\sqrt{n}M(2^{-H_1}t)\qquad \mbox{and}\qquad s \leq 2\sqrt{n}M(2^{-H_1}t) $$ hold. We first notice that $E(2^{-H_1}t)\leq 2\sqrt{n}M(2^{-H_1}t)$, while
iterating \rif{uell2} yields $$M(2^{-H_1}t)\leq \eta^{H_1-1}M(t/2)\,.$$ On the other hand we have that
\begin{eqnarray*}
\nonumber M(2^{-H_1}t)& \leq & \eta^{H_1-1}M(t/2) \\ \nonumber & \leq &  c_{g}\eta^{H_1-1}\mean{B_{t}} (|Dv| + s) \, dx \\
\nonumber &\leq & c_{g}\eta^{H_1-1}\mean{B_{t}} |Dv- (Dv)_{B_t}|  \, dx+  c_{g}\eta^{H_1-1}[|(Dv)_{B_t}| + s]\\
&\leq & c_{g}\eta^{H_1-1}E(t)+  4\sqrt{n}c_{g}\eta^{H_1-1}M(2^{-H_1}t)\,,
\end{eqnarray*}
where we used \rif{ssuu}. Therefore we have
$
M(2^{-H_1}t)\leq 2\eta^{H_1-1}c_{g} E(t),
$
so that, again by \rif{ch0}, the final outcome is
\eqn{out1}
$$
E(2^{-H_1}t)\leq {\textstyle \frac12} E(t)\,.
$$
\indent {\em Step 6: The degenerate case II.} Continuing the reasoning of the previous step, we assume that there exists a radius $t \leq R$ such that \rif{uell2} happens to hold whenever $r = t/2^{i}$ and $1\leq i \leq H_1+K_1 \in \en$, assuming also, in alternative to \rif{bothh},
that {\em at least} one of the following inequalities holds: \eqn{nbothh}
$$|(Dv)_{B_t}|> 2\sqrt{n}M(2^{-H_1}t)
\qquad \mbox{and}\qquad s> 2\sqrt{n}M(2^{-H_1}t)
\,.$$ In the case the first inequality in \rif{nbothh} holds we have that also $$|Dv-(Dv)_{B_{t}}|>\sqrt{n}M(2^{-H_1}t)\qquad \mbox{in} \  B_{2^{-H_1}t}\,;$$ therefore, using also \rif{ch}, we conclude as in \rif{out1}:
\begin{eqnarray}
\nonumber E(2^{-(H_1+K_1)}t) & \leq & 2\sqrt{n}M(2^{-(H_1+K_1)}t)\\ \nonumber &  \leq & 2\sqrt{n}\eta^{K_1}M(2^{-H_1}t)\\
\nonumber  &\leq & 2 \eta^{K_1}\mean{B_{2^{-H_1}t}}|Dv-(Dv)_{B_{t}}|\, dx\\ & \leq & 2^{nH_1+1} \eta^{K_1}\mean{B_{t}}|Dv-(Dv)_{B_{t}}|\, dx\nonumber \\ & \leq & {\textstyle \frac12} E(t)\,.\label{concdecay}
\end{eqnarray}
On the other hand, if $s> 2\sqrt{n}M(2^{-H_1}t)
$, we are in the situation of Step 3 with the choice $r\equiv  2^{-H_1}t$ and $\varrho\equiv 2^{-(H_1+K_1)}t$; therefore applying \rif{simpledecay} yields
$$
E(2^{-(H_1+K_1)}t) \leq c_{d}2^{-K_1\beta_0}E(2^{-H_1}t)
$$
while it also holds
$$
E(2^{-H_1}t)\leq 2\mean{B_{2^{-H_1}t}}|Dv-(Dv)_{B_{t}}|\, dx \leq 2^{nH_1+1}E(t)\,,
$$
so that, merging the last two estimates and using \rif{ch} we again conclude as in \rif{concdecay} with
$$
E(2^{-(H_1+K_1)}t) \leq c_{d}2^{-K_1\beta_0+nH_1+1}E(t)\leq {\textstyle \frac12} E(t)\,.
$$
\indent {\em Step 7: Summarizing Step 5 and Step 6.} Here we first note that the result of Step 3 holds if we replace $H_1$ by any larger integer since if \rif{ch0} holds for $H_1$, then so it does for any larger number. Therefore, looking at Step 6; if \rif{uell2} holds for $r = t/2^{i}$ and $1\leq i \leq H_1+K_1 \in \en$, and assuming also
both $$|(Dv)_{B_t}|\leq 2\sqrt{n}M(2^{-H_1-K_1}t)\qquad \mbox{and}\qquad s <2\sqrt{n}M(2^{-H_1-K_1}t)\,,$$ we can apply Step 5 with $H_1$ replaced by $H_1+K_1$. Therefore, summarizing the results of Steps 5 and 6 we have that: If \rif{uell2} holds for $r = t/2^{i}$ and $1\leq i \leq H:=H_1+K_1$ then
\eqn{basic00}
$$
E(\tau t)\leq {\textstyle \frac12} E(t)\,,\qquad \qquad \qquad \tau:=1/2^{H}\,.
$$
The crucial fact is that $H$, and therefore also $\tau$, only depends on $n,p,\ratio$.

{\em Step 8: Conclusion.} Let use define $\beta_1 := 1/H$ so that $\beta_1\equiv \beta_1(n,p,\ratio)\in (0,1]$ and $\tau^{\beta_1} = 1/2$, where $\tau \in (0,1/2]$ has been introduced in \rif{basic00}; we finally determine the exponent $\beta$ appearing in \rif{sottosti} by letting $\beta:=\min \{\beta_0,\beta_1\}$ where $\beta_0$ is the exponent from Lemma \ref{bbbb}. Moreover, with $\varrho \leq R$ as in the statement of the Theorem, we fix $k \in \en$ such that $\tau^{k+1} R < \varrho \leq \tau^{k} R$. Let us now define
$$
\mathbb S:= \{i \in \en \, : \, \trif{uell2}\  \mbox{holds for} \ r=R/2^i\,, i\geq 1\}\,.
$$
We argue on an alternative; the first case is when $\mathbb S = \en\setminus \{0\}$. Then either Step 5 or Step 6, and so Step 7, apply for every choice of the radius $t\equiv \tau^i R$ and $i \geq 0$, thus obtaining that
\eqn{decadi}
$$
 E(\tau^{i}R)\leq (1/2)^i E( R)= \tau^{i\beta_1} E( R)\leq \tau^{i\beta} E( R)
$$
holds for every $ i \in \en$. Taking into account the definition of $\beta_1$ we have
$$
E(\varrho)\leq 2\tau^{-n} E(\tau^kR)\leq c\tau^{k\beta}E(R)\leq c \left(\frac{\varrho}{R}\right)^\beta E(R)\,,
$$
so that \rif{sottosti} follows; note here that $c \equiv c(\tau)\equiv c(n,p,\ratio)$ by \rif{basic00}. In the other case we have that $\mathbb S \ne (\en\setminus \{0\}) $, and therefore if we set $m:= \min\,\big( (\en\setminus \{0\}) \setminus \mathbb S\big)$ then \rif{uell} implies that $|Dv| \geq M(R/2^{m-1})$ in the ball $B_{R/2^m}$. At this point we observe that in the ball $B_{R/2^m}$ the matrix $\tta(x)$ defined in \rif{diffeqee} satisfies the following uniform ellipticity bounds:
$$
c^{-1} [M(R/2^{m-1})+s]^{p-2}|\lambda|^2 \leq \langle \tta(x)\lambda, \lambda\rangle\,, \qquad  |\tta(x)|\leq c[M(R/2^{m-1})+s]^{p-2}
$$
for $c \equiv c(n,p,\ratio)>1$ and every choice of $\lambda \in \er^n$, and therefore we may apply Lemma \ref{bbbb} as already done in Step 3, thereby getting
\eqn{user00}
$$
E(r) \leq c\left(\frac{r}{R/2^m}\right)^{\beta_0} E(R/2^m) \qquad \qquad \mbox{for every} \ r \leq R/2^m\,.$$
Let now $\gamma \in \en$ be the unique non-negative integer such that $\gamma H <  m \leq  (\gamma+1)H$. We apply Step 7 exactly $\gamma$ times thereby getting - also when $\gamma=0$ - that
\eqn{iprimi}
$$
E(\tau^i R)\leq (1/2)^{i} E(R)\qquad \qquad \mbox{for every} \ i \leq \gamma\,.$$
Moreover, using the definition of $E(\cdot)$ and $\tau$ it immediately follows that
$$
E(R/2^m) \leq 2\mean{B_{R/2^m}}|Dv-(Dv)_{B_{\tau^\gamma R}}|\, dx\leq 2^{nH+1}E(\tau^\gamma R)\,,
$$
so that combining the last two inequalities gives
$$
E(R/2^m) \leq 2^{nH+1}2^{-\gamma} E(R)=c \, \tau^{\gamma \beta_1} E(R)\le c \, \tau^{\gamma \beta} E(R)
$$
with $c \equiv c(n,p,\ratio)$. On the other hand using \rif{user00} with $r = \tau^{\gamma+l}R\equiv 2^{-(\gamma+l)H}R$ and $l \in (\en\setminus \{0\})$, we gain
\begin{eqnarray*}
    E(\tau^{\gamma+\ell}R) &\leq & c \Big(\frac{\tau^{\gamma +\ell}R}{R/2^m}\Big)^{\beta_0} E(R/2^m)
    \\ & \le &  c\, 2^{H\beta_0} \tau^{\ell\beta_0} E(R/2^m)\\ &\leq & c\, \tau^{(\gamma+\ell) \beta} E(R)\, .
\end{eqnarray*}The last estimate and \rif{iprimi} in turn imply, for a suitable $c\equiv c (n,p,\ratio)$, that the inequality
$
 E(\tau^{i}R)\leq  c\tau^{i\beta} E( R)
$
holds for every $ i \in \en$. At this point we again conclude as after \rif{decadi}.
\end{proof}
\subsection{Comparison estimates.}\label{compsec} This section is devoted to the proof of a few suitable comparison estimates in which the density of the non-linear Wolff potential $\npma$ explicitly comes into the play. We now fix, for the rest of Section \ref{compsec}, a ball $B_{2R}\equiv B(x_0,2R)\subseteq
 \Omega$ with radius $2R$; we start defining $w \in u +
W^{1,p}_0(B_{2R})$ as the unique solution to the homogeneous Dirichlet problem
\eqn{Dirc1}
$$
\left\{
    \begin{array}{cc}
    \divo \ a(x,Dw)=0 & \qquad \mbox{in} \ B_{2R}\\
        w= u&\qquad \mbox{on $\partial B_{2R}$\,.}
\end{array}\right.
$$
\begin{lemma} \label{coco1} Under the assumption \trif{asp}$_2$, let $u \in W^{1,p}(\Omega)$ be as in Theorem \ref{mainx},
and $ w \in u +W^{1,p}_0(B_{2R})$
 as in \trif{Dirc1}. Then
 the following inequalities hold for a constant $c \equiv c(n,p,\nu)$:
\eqn{comp1}
$$
\mean{B_{2R}} |Du-Dw|\, dx \leq c\left[\frac{|\mu|(B_{2R})}{R^{n-1}}\right]^{\frac{1}{p-1}}
$$
\eqn{comp111}
$$
\mean{B_{2R}} |u-w|\, dx \leq c\left[\frac{|\mu|(B_{2R})}{R^{n-p}}\right]^{\frac{1}{p-1}}\,.
$$
\end{lemma}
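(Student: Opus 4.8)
The plan is the classical Boccardo--Gallou\"et truncation argument, which here delivers both inequalities simultaneously out of a single energy bound. Since $w\in u+W^{1,p}_0(B_{2R})$ solves \rif{Dirc1} while $u$ solves $-\divo\,a(x,Du)=\mu$ with $\mu\in L^1$, the function $v:=u-w$ lies in $W^{1,p}_0(B_{2R})$ and, subtracting the two weak formulations,
\[
\int_{B_{2R}}\langle a(x,Du)-a(x,Dw),D\varphi\rangle\,dx=\int_{B_{2R}}\varphi\,d\mu
\]
for every $\varphi\in W^{1,p}_0(B_{2R})\cap L^\infty(B_{2R})$ (legitimate because $u\in C^1$ and $\mu\in L^1$). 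For $k>0$ set $T_k(t):=\max\{-k,\min\{k,t\}\}$ and take $\varphi=T_k(v)$, which is admissible. Using $DT_k(v)=Dv\,\chi_{\{|v|<k\}}$, the coercivity \rif{mon3} (whose constant depends, via \rif{asp}$_2$, only on $n,p,\nu$, and which is available since $p\ge 2$), and $|\int_{B_{2R}}T_k(v)\,d\mu|\le k|\mu|(B_{2R})$, one obtains, writing $A:=|\mu|(B_{2R})$,
\[
\int_{\{|v|<k\}}|Dv|^p\,dx=\int_{B_{2R}}|DT_k(v)|^p\,dx\le c\,k\,A\qquad\text{for every }k>0 .
\]

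Next I extract level-set information. Assume first $p<n$. The Sobolev inequality applied to $T_k(v)\in W^{1,p}_0(B_{2R})$, combined with the previous bound, gives $\|T_k(v)\|_{L^{p^*}}^{p^*}\le c(kA)^{p^*/p}$ with $p^*=np/(n-p)$, whence by Chebyshev
\[
|\{|v|\ge k\}|\le c\,A^{\frac{n}{n-p}}k^{-\frac{n(p-1)}{n-p}}\qquad\text{for every }k>0 ;
\]
combining this with $|\{|Dv|>\lambda\}\cap\{|v|<k\}|\le\lambda^{-p}\int_{\{|v|<k\}}|Dv|^p\le c\lambda^{-p}kA$ and optimising in $k$ for each fixed $\lambda>0$ yields the Marcinkiewicz-type bound
\[
|\{|Dv|>\lambda\}|\le c\,A^{\frac{n}{n-1}}\lambda^{-\frac{n(p-1)}{n-1}}\qquad\text{for every }\lambda>0 .
\]

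Since $p\ge 2$ and $n\ge 2$, both exponents $q:=\tfrac{n(p-1)}{n-1}$ and $m:=\tfrac{n(p-1)}{n-p}$ are strictly bigger than $1$, so $Dv,v\in L^1(B_{2R})$; integrating $\int_0^\infty|\{\,\cdot\,>\lambda\}|\,d\lambda$ and optimising over the level at which one passes from the trivial bound $|B_{2R}|$ to the decaying tail gives $\mean{B_{2R}}|Dv|\,dx\le c|B_{2R}|^{-1/q}A^{1/(p-1)}$ and $\mean{B_{2R}}|v|\,dx\le c|B_{2R}|^{-1/m}A^{1/(p-1)}$. As $|B_{2R}|\approx R^n$ with $n/q=(n-1)/(p-1)$ and $n/m=(n-p)/(p-1)$, these are exactly \rif{comp1} and \rif{comp111}. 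The cases $p\ge n$ are easier: for $p=n$ one replaces $p^*$ by any fixed exponent larger than $p$, while for $p>n$ the embedding $W^{1,p}_0(B_{2R})\hookrightarrow L^\infty$ applied to $T_k(v)$ bounds $\|v\|_{L^\infty}$ directly by $c(R^{p-n}kA)^{1/p}$, and absorbing the truncation gives both estimates at once.

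I expect no conceptual obstacle here: the whole point is the exponent bookkeeping in the two optimisations, which must land precisely on the scaling-dictated powers $R^{-(n-1)/(p-1)}$ and $R^{-(n-p)/(p-1)}$. The only points requiring a little care are that $T_k(v)$ is a genuinely admissible test function — guaranteed by $v\in W^{1,p}_0(B_{2R})$, $u\in C^1$, $\mu\in L^1$ — and that the coercivity invoked is the unweighted \rif{mon3} rather than the weighted \rif{mon1}, so that the final constant depends only on $n,p,\nu$ as claimed.
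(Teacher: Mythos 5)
Your proof is correct, and it is a recognisable variant of the paper's own argument rather than a verbatim reproduction. Both start from the same Boccardo--Gallou\"et energy inequality $\int_{\{|u-w|<k\}}|Du-Dw|^p\,dx\le c\,k\,|\mu|(B_{2R})$ obtained by testing the difference of the two weak formulations with $T_k(u-w)$, and both rely only on \rif{mon3}, so the dependence $c\equiv c(n,p,\nu)$ is right. Where you diverge is in the passage from this to the $L^1$ bounds: the paper tests additionally with $\Phi_k(u-w)$, works on the dyadic annuli $C_k=\{k<|u-w|\le k+1\}$, and closes the estimate by H\"older's inequality for sequences plus Sobolev embedding, with a reabsorption (Young's inequality for $p<n$, a large starting index $k_0$ for $p=n$); you instead extract Marcinkiewicz-type level-set bounds for both $u-w$ and $Du-Dw$ by a two-parameter optimisation and then integrate the distribution functions. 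Your route has the advantage of producing the sharp weak-Lebesgue decay $|\{|Du-Dw|>\lambda\}|\lesssim A^{n/(n-1)}\lambda^{-n(p-1)/(n-1)}$ as a by-product (exactly the borderline information discussed in Remark \ref{borderliner}), and of giving \rif{comp111} directly rather than deducing it from \rif{comp1} via the Sobolev--Poincar\'e inequality as the paper does. Two small points of care: first, the paper normalises to $2R=1$ and $|\mu|(B_1)=1$ by the scaling of Step 3 before running the truncation argument; since you keep $R$ explicit, your exponent bookkeeping is automatic for $p<n$ (where the Sobolev constant is scale-invariant) but for $p=n$ and $p>n$ you must track the $R$-dependence of the embedding constants (e.g.\ $\|f\|_{L^r(B_{2R})}\le c|B_{2R}|^{1/r}\|Df\|_{L^n}$ and $\|f\|_{L^\infty}\le cR^{1-n/p}\|Df\|_{L^p}$), or simply rescale first as the paper does. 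Second, in the $p=n$ case the decay exponent $q_r$ you obtain depends on the auxiliary exponent $r$, but the ratio governing the power of $A$ is always $1/(n-1)$, so the optimisation still lands on $[A/R^{n-1}]^{1/(p-1)}$; it is worth saying this explicitly, since this is precisely the point where the paper needs its separate absorption argument.
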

\begin{proof} The proof revisit and modifies various comparison and truncation methods for measure data problems scattered in the literature; a chief reference here is the work of Boccardo \& Gall\"ouet \cite{BG1}. The proof is divided in three steps. We may without loss of generality assume that $2R=1$, that the ball in question is centered at the origin $B_{2R}\equiv B_1$, and finally that $|\mu|(B_1)= 1$; the scaling technique necessary to reduce to such a situation is reported in Step 3 below. 

{\em Step 1: The case $p>n$}. We test the weak
formulations of \rif{baseq} and \rif{Dirc1}$_1$ \eqn{weak2}
$$
\int_{B_1} \langle a(x,Du) -a(x,Dw),D \varphi \rangle\, dx =
\int_{B_1} \varphi \, d\mu \;,
$$
with $\varphi\equiv u-w$. Using
Morrey-Sobolev's embedding theorem we estimate the resulting right hand side as follows:
\begin{eqnarray*}\left| \int_{B_1}
(u-w) \, d\mu \right| & \leq & \sup_{B_1}\, |u-w|[|\mu|(B_1)] \\ & \leq & \sup_{B_1}\, |u-w| \leq  \frac{cp}{p-n}\left(\int_{B_1}|Du-Dw|^p\, dx \right)^{\frac{1}{p}}\,.
\end{eqnarray*}
The last inequality used together with \rif{weak2} with $\varphi = u-w$ and
\rif{mon3} yields
$$  \int_{B_1}| Du-Dw|^p\, dx
\leq
c\left(\int_{B_1}|Du-Dw|^p\, dx \right)^{\frac{1}{p}} \,,
$$
so that we first get
$ \| Du-Dw\|_{L^p(B_1)}\leq c$
and eventually
$ \| Du-Dw\|_{L^1(B_1)}\leq c$
that is \rif{comp1} - when $B_{2R}\equiv B_1$ and $|\mu|(B_1)= 1$.

{\em Step 2: The case $2\leq  p \leq n$}. For $k \geq  0$ denoting an integer, we define the following truncation operators
\eqn{troncamenti}
$$
    T_k(s):=\max\{ -k, \min\{s,k\}\}\quad\mbox{and}\quad
    \Phi_k(s):= T_1(s-T_k(s))\, ,
$$
 defined for $s \in \er$.
We test the weak formulation
\rif{weak2} by $\varphi\equiv T_k(u-w)$.
By \rif{mon3}, setting $D_k:=\{x \in B_1  :  |u(x)-w(x)|\leq k\}$, we obtain, with $c\equiv c(n,p,\nu)$,
\eqn{boc01}
$$
\int_{D_k} | Du-Dw|^p\, dx
\leq ck\,.
$$ Similarly, testing \rif{weak2} with $\varphi \equiv
\Phi_k(u-w)$ yields \eqn{boc1}
$$
\int_{C_k}| Du-Dw|^p \, dx \leq c\,,
$$
where this time
$C_k:=\{x \in B_1  : k  < |u(x)-w(x)|
\leq k+1\}$ and $c\equiv c(n,p,\nu)$. By H\"older's inequality, \rif{boc1} and
the very definition of $C_k$, we find
\begin{eqnarray}
\nonumber \int_{C_k} | Du-Dw|\, dx
&\leq&
|C_k|^{\frac{p-1}{p}} \left(\int_{C_k} | Du-Dw|^p\,
dx\right)^{\frac{1}{p}} \notag  \\
& \leq&c
|C_k|^{\frac{p-1}{p}}\leq c
k^{-\frac{n(p-1)}{p(n-1)}}\left(\int_{C_k}
|u-w|^{\frac{n}{n-1}}\, dx\right)^{\frac{p-1}{p}}\;.\label{gal}
\end{eqnarray}
We have of course used the elementary estimate
$$
|C_k|\leq \frac{1}{k^{\frac{n}{n-1}}}\int_{C_k} |u-w|^{\frac{n}{n-1}}\, dx\;.
$$
Using \rif{gal} with \rif{boc01}, H\"older's inequality for sequences, and finally
Sobolev's embedding theorem, with $k_0$ being a fixed positive integer we have
\begin{eqnarray}
\nonumber  \int_{B_1} | Du-Dw| \,dx &=&  \int_{D_{k_0}} (\cdot\cdot \cdot)\, dx + \sum_{k=k_0}^\infty\int_{C_{k}} (\cdot\cdot \cdot)\, dx
\\ \nonumber
   & \leq & ck_0^{\frac{1}{p}} + c\sum
_{k=k_0}^{\infty}
k^{-\frac{n(p-1)}{p(n-1)}}\left(\int_{C_k}
|u-w|^{\frac{n}{n-1}}\, dx\right)^{\frac{p-1}{p}}\\ \nonumber
   & \leq & ck_0^{\frac{1}{p}} + c\left[\sum
_{k=k_0}^{\infty}\frac{1}{k^{\frac{n(p-1)}{n-1}}}\right]^{\frac{1}{p}}\left(\int_{B_1}
|u-w|^{\frac{n}{n-1}}\, dx\right)^{\frac{p-1}{p}}\nonumber \\  &\leq  &
ck_0 + cH(k_0)\left(\int_{B_1} |Du-Dw|\,
dx\right)^{\frac{(p-1)n}{(n-1)p}}\,.\label{need}
\end{eqnarray}
In the last lines we have obviously set $$H(k_0):=\left[\sum
_{k=k_0}^{\infty}\frac{1}{k^{\frac{n(p-1)}{n-1}}}\right]^{\frac{1}{p}}\,,$$ while $c$ depends on $n,p,\nu$; note that $$p\geq 2\Longrightarrow \frac{n(p-1)}{n-1}>1$$ so that $H(k_0)$ is always finite and satisfies $H(k_0)\to 0$ when $k_0 \to \infty$.
Now, if $p<n$ then $$\frac{(p-1)n}{(n-1)p} < 1\,;$$ therefore we take $k_0=1$ in \rif{need}
and applying
Young's inequality in \rif{need} we find
\eqn{ineVV}
$$\|Du-Dw\|_{L^1(B_1)}\leq c\,,$$ that is \rif{comp1} when $2R=1$ and $|\mu|(B_1)= 1$. When $p=n$ we
choose $k_0 \equiv k_0 (n,p,\nu)$ large enough in order to have $cH(k_0)=1/2$ in
\rif{need} and reabsorb the last integral on the right hand side, so that \rif{ineVV} follows again.

{\em Step 3: Scaling procedures}. We first reduce to the case $B_{2R}\equiv B_1$ by a standard scaling argument, i.e. letting
$$\tilde{u}(y) := \frac{u(x_0+2Ry)}{2R}\,, \qquad \tilde{w}(y) := \frac{w(x_0+2Ry)}{2R}\,,$$ and
$$\tilde{a}(y,z) := a(x_0+2Ry,z)\,, \qquad \tilde{\mu}(y) := 2R\mu(x_0+2Ry)$$ for $y \in B_1$ so that $-\divo\ \tilde{a}(y,D\tilde{u}) =
\tilde{\mu}$ and $\divo\ \tilde{a}(y,D\tilde{w}) =0$ hold. At this point one writes estimate \rif{comp1} for $\tilde{u}$ and $\tilde{w}$ and then scales back.  To reduce to the case $|\mu|(B_1)=1$ we adopt another scaling: this time we define $A :=
[|\mu|(B_1)]^{1/(p-1)}$, and we may assume $A>0$ otherwise $u\equiv w$ and \rif{comp1} follows trivially by the strict monotonicity of the operator $a(\cdot)$. We define the new solutions
$\bar{u}:=A^{-1}u$, $\bar{w}:=A^{-1}w,$ the new datum
$\bar{\mu}:=A^{1-p}\mu$, and the new vector field
$\bar{a}(x,z):=A^{1-p}a(x,Az).$ Therefore we have $\divo \
\bar{a}(x,D\bar{u})=\bar{\mu}$, $\divo\
\bar{a}(x,D\bar{w})=0$, in the weak sense. We make sure that we
can apply the result in Step 2. Trivially
$|\bar{\mu}|(B_1)=1$ and moreover it is easy to see that the
vector field $\bar{a}(x,z)$ satisfies \rif{asp} with $s$ replaced
by $s/A\geq 0$. Therefore \rif{comp1} holds in the
form
$$
\int_{B_1}|D\bar{u}-D\bar{w}|\, dx\leq c_2$$ with $c_2 \equiv
c_2(n,p,\nu)$. At this point we find back \rif{comp1} just using the definitions of $\bar u, \bar w$. As for \rif{comp111}, this is a consequence of \rif{comp1} via the use of Sobolev inequality:
$$ \mean{B_{2R}} |u-w|\, dx \leq cR\mean{B_{2R}} |Du-Dw|\, dx\leq    c\left[\frac{|\mu|(B_{2R})}{R^{n-p}}\right]^{\frac{1}{p-1}}\,.
$$The proof is now complete\end{proof}
\begin{remark} In the proof of the gradient estimate \rif{mainestx} we shall only need \rif{comp1}, while \rif{comp111} will be needed only later, in the different context of Remark \ref{alternative} below.
\end{remark}
After introducing $w$ in \rif{Dirc1} we similarly define $v \in w +
W^{1,p}_0(B_R)$, on the concentric smaller ball $B_R\equiv B(x_0,R)$, as the unique solution to the homogeneous Dirichlet problem with frozen coefficients
\eqn{Dirc12}
$$
\left\{
    \begin{array}{cc}
    \divo \ a(x_0,Dv)=0 & \qquad \mbox{in} \ B_R\\
        v=w  &\qquad \mbox{on $\partial B_R$\,.}
\end{array}\right.
$$
Next, we state
a standard comparison result; we report its proof for completeness.
\begin{lemma} \label{coco6} Under the assumptions of Theorem \ref{mainx},
with $w$ as in \trif{Dirc1} and $v$ as in \trif{Dirc12}, there exists a constant
$c\equiv c(n,p,\ratio)$ such that the following inequality holds:
 \eqn{baba3}
$$
\mean{B_{R}} |Dw-Dv|^{p} \, dx\leq c
[L_1\omega(R)]^{2}\mean{B_{R}}(|Dw|+s)^p\, dx \;.
$$
\end{lemma}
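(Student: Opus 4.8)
The plan is to compare $v$ and $w$ directly via the energy (testing) method, exploiting that $v$ solves the frozen equation $\divo\, a(x_0, Dv) = 0$ on $B_R$ while $w$ solves $\divo\, a(x, Dw) = 0$ on $B_{2R} \supseteq B_R$, and that both agree on $\partial B_R$. First I would test the two weak formulations with the admissible test function $\varphi := w - v \in W^{1,p}_0(B_R)$, obtaining
\[
\int_{B_R} \langle a(x_0, Dw) - a(x_0, Dv), Dw - Dv \rangle\, dx = \int_{B_R} \langle a(x_0, Dw) - a(x, Dw), Dw - Dv\rangle\, dx,
\]
where on the left I used that $v$ is $a(x_0,\cdot)$-harmonic and on the right that $w$ is $a(x,\cdot)$-harmonic, so the term $\int \langle a(x,Dw), D(w-v)\rangle = 0$ can be added. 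The left-hand side is bounded below by the monotonicity inequality \rif{mon3} (since $p \geq 2$), giving $c^{-1}\int_{B_R}|Dw - Dv|^p\, dx$ as a lower bound — this is the quantity we want to estimate.

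Next I would estimate the right-hand side using the continuity assumption \rif{asp}$_3$, namely $|a(x_0,Dw) - a(x,Dw)| \leq L_1 \omega(|x-x_0|)(|Dw|^2+s^2)^{(p-1)/2} \leq L_1\omega(R)(|Dw|+s)^{p-1}$ for $x \in B_R$, since $\omega$ is nondecreasing and $|x - x_0| \leq R$. Then Hölder's inequality (with exponents $p$ and $p'$) applied to $\int_{B_R} L_1\omega(R)(|Dw|+s)^{p-1}|Dw-Dv|\, dx$ yields
\[
\int_{B_R}|Dw-Dv|^p\, dx \leq c\, L_1\omega(R)\left(\int_{B_R}(|Dw|+s)^p\, dx\right)^{\frac{p-1}{p}}\left(\int_{B_R}|Dw-Dv|^p\, dx\right)^{\frac{1}{p}}.
\]
Dividing through by the last factor (assuming it is nonzero, else the estimate is trivial) and raising to the power $p/(p-1)$ gives
\[
\int_{B_R}|Dw-Dv|^p\, dx \leq c\,[L_1\omega(R)]^{\frac{p}{p-1}}\int_{B_R}(|Dw|+s)^p\, dx.
\]
Passing to averages introduces the factor $|B_R|^{-1}$ on both sides, which cancels, so one gets $\mean{B_R}|Dw-Dv|^p\, dx \leq c[L_1\omega(R)]^{p/(p-1)}\mean{B_R}(|Dw|+s)^p\, dx$.

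The remaining point — and the only mildly delicate one — is reconciling the exponent $p/(p-1)$ that the above naive argument produces with the exponent $2$ claimed in \rif{baba3}. Since $p \geq 2$ we have $p/(p-1) \leq 2$, and because $\omega(\cdot) \leq 1$ and $L_1 \geq 1$ one cannot directly replace $[L_1\omega(R)]^{p/(p-1)}$ by $[L_1\omega(R)]^2$ going the "easy" way. The correct route, which I expect to be the actual content the authors use, is to iterate the testing more carefully: instead of estimating $|a(x_0,Dw)-a(x,Dw)|$ crudely by $L_1\omega(R)(|Dw|+s)^{p-1}$, one splits the integral using a finer application of Young's inequality with a small parameter to absorb part of $\int|Dw-Dv|^p$ and then re-inserts the resulting bound, or alternatively one uses the intermediate monotonicity inequality \rif{mon1} together with a Gehring-type self-improvement. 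The cleanest version is: apply Young's inequality as $L_1\omega(R)(|Dw|+s)^{p-1}|Dw-Dv| \leq \varepsilon |Dw-Dv|^p + c(\varepsilon)[L_1\omega(R)]^{p'}(|Dw|+s)^p$, absorb the $\varepsilon$-term, and then observe that the genuine improvement to exponent $2$ comes from interpolating this $L^p$ bound against the trivial $L^p$ comparison bound $\int_{B_R}|Dw-Dv|^p \leq c\int_{B_R}(|Dw|+s)^p$ (which holds by minimality/monotonicity), using $[L_1\omega(R)]^{p'} \cdot (\text{something})^{1 - \theta} \cdot (\text{something})^{\theta}$ with the weight appearing to the power $2$. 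The hard part is organizing this interpolation so the exponent on $L_1\omega(R)$ lands exactly at $2$ while keeping the right-hand side in terms of $\mean{B_R}(|Dw|+s)^p$; everything else is routine testing and Hölder.
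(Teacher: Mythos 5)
Your set-up is correct -- testing with $\varphi=w-v$ and rewriting the left-hand side using that $w$ solves the $x$-dependent equation is exactly how the paper starts -- but the proof as you propose it does not reach the exponent $2$, and the ``interpolation'' you sketch at the end is not the mechanism and would not deliver it. The point you miss is that one should \emph{not} pass immediately from the monotonicity to the crude lower bound $c^{-1}\int_{B_R}|Dw-Dv|^p\,dx$ via \rif{mon3}. Instead one keeps the sharper intermediate form \rif{mon1}, whose left-hand side
$$
\int_{B_R}(s^2+|Dv|^2+|Dw|^2)^{\frac{p-2}{2}}|Dw-Dv|^2\,dx
$$
is \emph{quadratic} in the difference $|Dw-Dv|$. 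On the right-hand side one then estimates, using \rif{asp}$_3$,
$$
L_1\omega(R)\,(|Dv|^2+|Dw|^2+s^2)^{\frac{p-1}{2}}|Dw-Dv|
\le \tfrac12\,(|Dv|^2+|Dw|^2+s^2)^{\frac{p-2}{2}}|Dw-Dv|^2
+ c\,[L_1\omega(R)]^2(|Dv|+|Dw|+s)^{p},
$$
i.e.\ Young's inequality with exponent $2$ after splitting $(\cdot)^{\frac{p-1}{2}}=(\cdot)^{\frac{p-4}{4}+\frac{p}{4}}\cdot(\cdot)^{\frac{p-2}{4}}$ so that the factor multiplying $|Dw-Dv|$ matches the square root of the quadratic form on the left. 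Absorbing the first term yields the weighted quadratic quantity bounded by $c[L_1\omega(R)]^2\int(|Dv|+|Dw|+s)^p$, and since $p\ge2$ one has $|Dw-Dv|^p\le(s^2+|Dv|^2+|Dw|^2)^{\frac{p-2}{2}}|Dw-Dv|^2$, which recovers the $L^p$ norm of the difference. This is why the exponent is exactly $2$: it is the homogeneity of the quadratic form in $|Dw-Dv|$, not any Gehring or interpolation argument. Your H\"older-with-$(p,p')$ route irrevocably loses this and lands at $p/(p-1)$, which is strictly weaker for $p>2$ since $\omega\le1$.

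A second, smaller omission: after the absorption the right-hand side still contains $|Dv|$, and to arrive at \rif{baba3} one must first establish the energy bound $\int_{B_R}|Dv|^p\,dx\le c\int_{B_R}(|Dw|+s)^p\,dx$ (obtained by testing \trif{Dirc12}$_1$ with $w-v$ and using the coercivity \rif{veramon} coming from \rif{asp}$_1$ and \rif{mon3}). Your proposal never addresses how $|Dv|$ is eliminated from the right-hand side, and the ``trivial $L^p$ comparison bound'' you invoke is precisely this step, which needs its own (short) proof. With these two ingredients the argument closes in a few lines; without them the claimed exponent $2$ is not proved.
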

\begin{proof} We note that by the growth condition on $a$ in \rif{asp}$_1$ and by \rif{mon3}, we also have the following coercivity condition:
\eqn{veramon}
$$
c^{-1}(|z|^2+s^2)^{\frac{p-2}{2}}|z|^2-cs^p \leq \langle
a(x,z),z\rangle, \qquad  c \equiv c(n,p,\ratio)\geq 1\,,
$$
which easily follows by Young's inequality. Using \rif{veramon}, that is testing \rif{Dirc12}$_1$ with $w-v$, and then
 \rif{asp}$_2$ and Young's inequality, gives
the energy bound
\eqn{coecoe}
$$
\int_{B_R} |Dv|^p\, dx \leq
c\int_{B_R} (|Dw|+s)^p\, dx,
$$
which holds for a constant $c$ depending on $n,p,\ratio$. In turn, using \rif{mon1}, the fact that both $v$ and $w$ are solutions, \rif{asp}$_3$ and again Young's inequality, we have
\begin{align*}
\int_{B_{R}}
(s^2&+|Dv|^2+|Dw|^2)^{\frac{p-2}{2}}|Dw-Dv|^2\,
dx\\
&\leq c\int_{B_{R}} \langle a(x_0,Dw)-a(x_0,Dv),
Dw-Dv\rangle \, dx\\& =c\int_{B_{R}} \langle
a(x_0,Dw)-a(x,Dw), Dw-Dv\rangle \, dx\\&
\leq cL_1\omega(R)
\int_{B_{R}}(|Dv|^2+|Dw|^2+s^2)^\frac{p-1}{2}|Dw-Dv|\,
dx \\ &\leq
\frac{1}{2}\int_{B_{R}}
(|Dv|^2+|Dw|^2+s^2)^\frac{p-2}{2}|Dw-Dv|^2\, dx\\
&\qquad\qquad\qquad
+ c[L_1\omega(R)]^2\int_{B_{R}}
(|Dv|+|Dw|+s)^{p}\, dx\;.
\end{align*}
Therefore we gain
$$
\int_{B_{\bar{R}}} (s^2+|Dv|^2+|Dw|^2)^{\frac{p-2}{2}}|Dw-Dv|^2 \, dx \leq c[L_1\omega(R)]^2\int_{B_{R}}(|Dv|+|Dw|+s)^{p}\, dx,
$$
and \rif{baba3} follows by last inequality recalling that $p\geq 2$.
\end{proof}
\begin{lemma}\label{compx} Let $u$ be as in Theorem \ref{mainx}, and let $w \in u +  W^{1,p}_0(B_{2R})$ and $v\in w +  W^{1,p}_0(B_R)$ be defined in \trif{Dirc1} and \trif{Dirc12}, respectively. Then for a constant $c$ depending only on $n,p,\ratio$, it holds that
\begin{eqnarray}
\mean{B_{R}} |Du-Dv|\, dx\nonumber &\leq & c\left\{1+[L_1\omega(R)]^{\frac{2}{p}}\right\}\left[\frac{|\mu|(B_{2R})}{R^{n-1}}\right]^{\frac{1}{p-1}}\\ && \qquad + c[L_1\omega(R)]^{\frac{2}{p}}\mean{B_{2R}}(|Du|+s)\, dx\,.\label{comp12}
\end{eqnarray}
\end{lemma}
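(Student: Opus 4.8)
\emph{Plan.} I would prove \trif{comp12} by combining the triangle inequality with the two comparison Lemmas \ref{coco1} and \ref{coco6}, the only step requiring real care being a reverse H\"older inequality for $Dw$ that converts the $L^p$-average of $|Dw|+s$ appearing on the right-hand side of \trif{baba3} into an $L^1$-average.

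First I would use $|Du-Dv|\le|Du-Dw|+|Dw-Dv|$ together with $B_R\subseteq B_{2R}$ to write
\[
\mean{B_R}|Du-Dv|\, dx \le 2^n\mean{B_{2R}}|Du-Dw|\, dx + \mean{B_R}|Dw-Dv|\, dx\,,
\]
and bound the first term directly by \trif{comp1}, getting the contribution $c[\,|\mu|(B_{2R})/R^{n-1}\,]^{1/(p-1)}$. For the second term, H\"older's inequality and \trif{baba3} give
\[
\mean{B_R}|Dw-Dv|\, dx \le \Big(\mean{B_R}|Dw-Dv|^p\, dx\Big)^{1/p} \le c[L_1\omega(R)]^{2/p}\Big(\mean{B_R}(|Dw|+s)^p\, dx\Big)^{1/p}\,.
\]

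The core of the argument is then to show $(\mean{B_R}(|Dw|+s)^p\, dx)^{1/p}\le c\mean{B_{2R}}(|Dw|+s)\, dx$. Since $w$ solves $\divo\, a(x,Dw)=0$ in $B_{2R}$, the standard Caccioppoli inequality — obtained testing the equation with $\eta^p(w-(w)_{B_{2r}})$ and using the growth \trif{asp}$_1$ and the coercivity \trif{veramon}, both valid for $a(x,\cdot)$ — combined with Sobolev--Poincar\'e yields the reverse H\"older inequality
\[
\Big(\mean{B_r}(|Dw|+s)^p\, dx\Big)^{1/p} \le c\Big(\mean{B_{2r}}(|Dw|+s)^{p_*}\, dx\Big)^{1/p_*}\,,\qquad p_*:=\frac{np}{n+p}\in[1,p)\,,
\]
valid for all concentric $B_{2r}\subseteq B_{2R}$; note $p_*\ge 1$ here precisely because $p\ge 2$ and $n\ge 2$. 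Rewriting this for $g:=(|Dw|+s)^{p_*}$ as $(\mean{B_r}g^{p/p_*})^{p_*/p}\le c\mean{B_{2r}}g$, I would apply Lemma \ref{revq} with $\chi_0=\chi=p/p_*>1$ and $t=1/p_*\in(0,1]$ (equivalently: invoke Gehring's lemma and then lower the exponent on the right-hand side) to conclude $(\mean{B_R}(|Dw|+s)^p\, dx)^{1/p}\le c\mean{B_{2R}}(|Dw|+s)\, dx$, with $c\equiv c(n,p,\ratio)$.

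It then only remains to insert $\mean{B_{2R}}(|Dw|+s)\, dx \le \mean{B_{2R}}|Dw-Du|\, dx + \mean{B_{2R}}(|Du|+s)\, dx \le c[\,|\mu|(B_{2R})/R^{n-1}\,]^{1/(p-1)} + \mean{B_{2R}}(|Du|+s)\, dx$ — using \trif{comp1} once more — into the previous bound and collect all contributions; the factor $[L_1\omega(R)]^{2/p}$ then multiplies both the potential-density term and the average of $|Du|+s$, producing exactly \trif{comp12}. The main obstacle is the reverse H\"older step: a plain energy estimate would only control an $L^p$-average of $Du$ on the right, so the self-improving (Gehring-type) higher integrability of $Dw$ is genuinely needed to reach the $L^1$-average of \trif{comp12}; apart from that, only a routine juggling of the intermediate radii ($B_R$, $B_{3R/2}$, $B_{2R}$) is needed so that the reverse H\"older inequality can be fed into Lemma \ref{revq} on a family of balls exhausting $B_{2R}$.
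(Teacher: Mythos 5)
Your argument is correct and follows essentially the same route as the paper: triangle inequality plus \trif{comp1}, H\"older plus \trif{baba3}, a reverse H\"older inequality for $|Dw|+s$ lowered to an $L^1$-average via Lemma \ref{revq}, and a final application of \trif{comp1} to pass from $Dw$ back to $Du$. The only (immaterial) difference is that you reach the hypothesis of Lemma \ref{revq} through a Caccioppoli--Sobolev--Poincar\'e inequality with the sub-natural exponent $p_*=np/(n+p)$, whereas the paper invokes Gehring's lemma and applies Lemma \ref{revq} to $g=(|Dw|+s)^p$; both yield $\bigl(\mean{B_R}(|Dw|+s)^p\,dx\bigr)^{1/p}\le c\mean{B_{2R}}(|Dw|+s)\,dx$ with $c\equiv c(n,p,\ratio)$.
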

\begin{proof}
We start proving the following inequality:
\eqn{inter1}
$$\mean{B_{R}} |Dw-Dv|\, dx  \leq  c [L_1\omega(R)]^{\frac{2}{p}}\mean{B_{2R}} (|Dw|+ s)\, dx\,,
$$
with $c \equiv c(n,p,\ratio)$. Keeping \rif{veramon} in mind we may apply Gehring's lemma in the version presented in \cite[Chapter 6]{G}, finding there exists a constant $\chi_0\equiv \chi_0(n,p,\ratio)>1$ such that the reverse H\"older type inequality
$$
\left(\mean{B_{\varrho/2}} (|Dw|+s)^{\chi_0p}\, dx \right)^\frac{1}{\chi_0}\leq c \mean{B_{\varrho}} (|Dw|+s)^{p}\, dx
$$
holds whenever $B_\varrho \subseteq B_{{2R}}$, for a constant $c$ depending only on $n,p,\ratio$. In turn, applying Lemma \ref{revq} with $g\equiv (|Dw|+s)^p$, leads to establish that also
\eqn{rev}
$$
\left(\mean{B_{R}} (|Dw|+s)^{p}\, dx \right)^\frac{1}{p}\leq c \mean{B_{2R}} (|Dw|+s)\, dx
$$
holds. Using now \rif{baba3} and previous inequality we estimate as follows:
\begin{eqnarray*} &&\mean{B_{R}} |Dw-Dv|\, dx  \leq  c \left(\mean{B_{R}}  |Dw-Dv|^{p}\, dx\right)^{\frac{1}{p}}\\
&&\quad   \leq  c[L_1\omega(R)]^{\frac{2}{p}} \left( \mean{B_{R}}(|Dw|+s)^p\, dx\right)^{\frac{1}{p}}\leq  c[L_1\omega(R)]^{\frac{2}{p}} \mean{B_{2R}} (|Dw|+s)\, dx
\end{eqnarray*}
and therefore \rif{inter1} follows. Using \rif{inter1} and \rif{comp1} now yields
\eqn{comp123}
$$ \mean{B_{R}} |Du-Dv|\, dx  \leq  c\left[\frac{|\mu|(B_{2R})}{R^{n-1}}\right]^{\frac{1}{p-1}}+ c[L_1\omega(R)]^{\frac{2}{p}}\mean{B_{2R}}(|Dw|+s)\, dx\,.
$$
In order to estimate the last integral in \rif{comp123} we simply use \rif{comp1}
as follows:
\begin{eqnarray*}
\mean{B_{2R}}|Dw|\, dx& \leq &  \mean{B_{2R}}|Du|\, dx +  \mean{B_{2R}}|Du-Dw|\, dx\\
 & \leq  &  \mean{B_{2R}}|Du|\, dx + c(n,p,\nu) \left[\frac{|\mu|(B_{2R})}{R^{n-1}}\right]^{\frac{1}{p-1}}\,.
\end{eqnarray*}
Using the last inequality in combination with \rif{comp123} we conclude with \rif{comp12}.
\end{proof}
\subsection{Proof of the main estimates}\label{ellell}
We start with a technical lemma.
\begin{lemma}\label{compxxx} Let $u$ be as in Theorem \ref{mainx}, then there exists constants $\beta \in (0,1]$ and $c,c_1\geq 1$, all depending only on $n,p,\ratio$, and a positive radius $R_1 \equiv R_1 (L_1,\omega(\cdot))$, such that the following estimate holds whenever $B_{\varrho} \subseteq B_R \subseteq B_{2R} \subseteq
 \Omega$ are concentric balls with $R\leq R_1$:
\begin{eqnarray}
\nonumber && \mean{B_\varrho} |D u-(D u)_{B_{\varrho}}|\, dx \leq  c_1 \left(\frac{\varrho}{R}\right)^{\beta} \mean{B_{2R}} |D u-(D u)_{B_{2R}}|\, dx \\ &&\qquad + c\left(\frac{R}{\varrho}\right)^{n}\left[\frac{|\mu|(B_{2R})}{R^{n-1}}\right]^{\frac{1}{p-1}}+ c\left(\frac{R}{\varrho}\right)^{n}[L_1\omega(R)]^{\frac{2}{p}}\mean{B_{2R}}(|Du|+s)\, dx\,.\label{CComp}
\end{eqnarray}
Moreover, in the case the vector field $a(\cdot)$ is independent of the variable $x$, the previous inequality holds without any restriction on $R$.
\end{lemma}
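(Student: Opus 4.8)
The plan is to deduce the excess-decay inequality \rif{CComp} by feeding the two comparison estimates already at our disposal --- Lemma \ref{coco1}, which controls $\mean{B_{2R}}|Du-Dw|\, dx$ by the Wolff density $[|\mu|(B_{2R})/R^{n-1}]^{1/(p-1)}$, together with Lemma \ref{compx} --- into the intrinsic decay estimate of Theorem \ref{sotto}, applied to the frozen comparison map $v$ from \rif{Dirc12}. I shall use repeatedly two elementary facts: for every constant vector $\lambda$ one has $\mean{B}|g-(g)_{B}|\, dx\leq 2\mean{B}|g-\lambda|\, dx$, and whenever $B_{\varrho}\subseteq B_{R}$ one has $\mean{B_{\varrho}}|g|\, dx\leq (R/\varrho)^{n}\mean{B_{R}}|g|\, dx$. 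The first preliminary step is to fix $R_1$: since $\omega$ is a modulus of continuity with $\omega(\varrho)\downarrow 0$, there is $R_1\equiv R_1(L_1,\omega(\cdot))>0$ with $[L_1\omega(R)]^{2/p}\leq 1$ for all $R\leq R_1$; then the factor $1+[L_1\omega(R)]^{2/p}$ appearing in \rif{comp12} is at most $2$, and Lemma \ref{compx} yields, for $R\leq R_1$, the estimate
$$
\mean{B_{R}}|Du-Dv|\, dx\;\leq\; c\,\mathcal{E}(R)\,,\qquad \mathcal{E}(R):=\left[\frac{|\mu|(B_{2R})}{R^{n-1}}\right]^{\frac{1}{p-1}}+[L_1\omega(R)]^{\frac{2}{p}}\mean{B_{2R}}(|Du|+s)\, dx\,,
$$
in which $c$ depends only on $n,p,\ratio$.

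Next I would split by the triangle inequality. Comparing $Du$ on $B_{\varrho}$ with the constant $(Dv)_{B_{\varrho}}$ and inserting $Dv$,
$$
\mean{B_{\varrho}}|Du-(Du)_{B_{\varrho}}|\, dx\;\leq\; 2\mean{B_{\varrho}}|Du-Dv|\, dx+2\mean{B_{\varrho}}|Dv-(Dv)_{B_{\varrho}}|\, dx\,.
$$
The first summand is at most $2(R/\varrho)^{n}\mean{B_{R}}|Du-Dv|\, dx\leq 2c(R/\varrho)^{n}\mathcal{E}(R)$ by the preliminary step. For the second summand I would apply Theorem \ref{sotto} to $v$ and then pass back to $Du$: using $\varrho\leq R$ and the two elementary facts once more,
$$
\mean{B_{\varrho}}|Dv-(Dv)_{B_{\varrho}}|\, dx\;\leq\; c\Big(\frac{\varrho}{R}\Big)^{\beta}\mean{B_{R}}|Dv-(Dv)_{B_{R}}|\, dx\;\leq\; c\Big(\frac{\varrho}{R}\Big)^{\beta}\Big(\mean{B_{R}}|Du-Dv|\, dx+\mean{B_{R}}|Du-(Du)_{B_{R}}|\, dx\Big)\,,
$$
and then $\mean{B_{R}}|Du-(Du)_{B_{R}}|\, dx\leq 2^{n+1}\mean{B_{2R}}|Du-(Du)_{B_{2R}}|\, dx$.

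Assembling these inequalities, and using $(\varrho/R)^{\beta}\leq 1$ to absorb the stray occurrence of $\mean{B_{R}}|Du-Dv|\, dx$ produced by the decay step into its bound $c\,\mathcal{E}(R)$ (while $(R/\varrho)^{n}\geq 1$ makes the attached prefactor harmless), I would arrive at
$$
\mean{B_{\varrho}}|Du-(Du)_{B_{\varrho}}|\, dx\;\leq\; c_1\Big(\frac{\varrho}{R}\Big)^{\beta}\mean{B_{2R}}|Du-(Du)_{B_{2R}}|\, dx+c\Big(\frac{R}{\varrho}\Big)^{n}\mathcal{E}(R)\,,
$$
which, after unfolding $\mathcal{E}(R)$, is precisely \rif{CComp}; here $\beta\in(0,1]$ is the exponent of Theorem \ref{sotto} and $c,c_1\geq1$ depend only on $n,p,\ratio$, while $R_1$ depends only on $L_1,\omega(\cdot)$. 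When the vector field $a(\cdot)$ does not depend on $x$, uniqueness for the Dirichlet problem (a consequence of the strict monotonicity \rif{mon3}) forces $Dv=Dw$ a.e.\ on $B_{R}$, so the $\omega$-contributions disappear from Lemma \ref{compx}; since Lemma \ref{coco1} and Theorem \ref{sotto} carry no smallness restriction on $R$, the same chain of inequalities delivers \rif{CComp}, with vanishing last term, for all admissible radii.

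The heart of the matter has in fact been handled upstream --- in Theorem \ref{sotto}, the decay estimate below the natural growth exponent, and in Lemma \ref{coco1}, the Boccardo--Gallou\"et type comparison --- so the present lemma is really a careful recombination. The one delicate point is the bookkeeping of the two competing factors coming out of \rif{comp12}: the possibly large factor $[L_1\omega(R)]^{2/p}$ multiplying $\mean{B_{2R}}(|Du|+s)\, dx$ must be \emph{retained} untouched, whereas the benign factor $1+[L_1\omega(R)]^{2/p}$ multiplying the $\mu$-density is precisely what dictates the smallness threshold $R_1$; and one must be careful that the blow-up factor $(R/\varrho)^{n}$ is attached only to the comparison (error) terms and never to the decaying excess term $\mean{B_{2R}}|Du-(Du)_{B_{2R}}|\, dx$.
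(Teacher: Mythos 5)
Your proposal is correct and follows essentially the same route as the paper: the same splitting of the excess of $Du$ via the frozen comparison map $v$, the decay estimate of Theorem \ref{sotto}, the doubling step \rif{stimalike}, the comparison bound \rif{comp12} for the error term, and the same choice of $R_1$ via $L_1\omega(R_1)\leq 1$. The observation that $v\equiv w$ in the $x$-independent case is a harmless reformulation of the paper's remark that one may then take $\omega\equiv 0$.
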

\begin{proof} Starting by $B_{2R}$ we define the comparison functions $v$ and $w$ as in \rif{Dirc12} and \rif{Dirc1}, respectively, and then we compare $Du$ and $Dv$ by mean of \rif{comp12}, using \rif{sottosti} as basic reference estimate for $v$, eventually transferred to $u$:
\begin{eqnarray}
&& \nonumber \mean{B_\varrho} |D u-(D u)_{B_{\varrho}}|\, dx \leq 2
 \mean{B_\varrho} |D u- (D v)_{B_{\varrho}}|\, dx\\
&& \nonumber \leq 2\mean{B_\varrho} |D v-(D v)_{B_{\varrho}}|\, dx + 2\mean{B_\varrho} |D u- D v|\, dx
\\
&& \nonumber \leq  c\left(\frac{\varrho}{R}\right)^\beta\mean{B_R} |D v-(D v)_{B_{R}}|\, dx + c\left(\frac{R}{\varrho}\right)^n
\mean{B_R} |Du-Dv|\, dx\\
&& \nonumber \leq  c\left(\frac{\varrho}{R}\right)^\beta\mean{B_R} |D u-(D u)_{B_{R}}|\, dx + c\left(\frac{R}{\varrho}\right)^n
\mean{B_R} |Du-Dv|\, dx\\
&& \leq  2^{n+1}c\left(\frac{\varrho}{R}\right)^\beta\mean{B_{2R}}
|D u-(D u)_{B_{2R}}|\, dx + c\left(\frac{R}{\varrho}\right)^n
\mean{B_R} |Du-Dv|\, dx\,.\label{bchain}
\end{eqnarray}
Notice that from the second-last to the last line we estimated as follows:
\begin{eqnarray}
 &&\nonumber  \mean{B_{R}} |D u-(Du)_{B_{R}}|\, dx \leq \mean{B_{R}} |D u-(D u)_{B_{2R}}|\, dx \\
&&  \qquad  + |(D u)_{B_{R}}- (D u)_{B_{2R}}|\leq  2^{n+1}\mean{B_{2R}} |D u-(D u)_{B_{2R}}|\, dx\,.\label{stimalike}
\end{eqnarray}
In order to get \rif{CComp} it is now sufficient to estimate the last integral in \rif{bchain} by mean of \rif{comp12}
and then to take the radius $R_1$ such that $L_1\omega(R_1)\leq 1$. Needless to say, in the case $a(\cdot)$ does not depend on the variable $x$ we can take $\omega(\cdot)\equiv 0$ and therefore no restriction on $R$ is needed.
\end{proof}
\begin{proof}[Proof of Theorem \ref{mainx}] The proof is divided in three steps.
In what follows all the radii considered will be smaller than a certain radius $\tilde{R}$:
\eqn{raggio}
 $$
 R \leq \tilde{R}
 $$
whose final values will be determined towards the end of the proof, i.e. we shall decrease the values of $\tilde{R}$ several times according to our needs, but always in such a way that the resulting determination of $\tilde{R}$ will be still depending only on $ n, p,\ratio, L_1,\omega(\cdot)$. This will finally give the radius $\tilde{R}$ in the statement of Theorem \ref{mainx}. From the proof it will be clear that such a restriction is necessary only in the case the vector field $a(\cdot)$ depends on the variable $x$, while all the inequalities holds with no restriction on $R$ when $a\equiv a(z)$; the final outcome is that estimate \rif{mainestx} holds for every ball $B(x_0,2R)\subseteq \Omega$. This observation also clarifies the last assertion in the statement of Theorem \ref{mainx}. Initially, we shall take $\tilde{R} \leq R_1$ where the radius $R_1\equiv R_1(L_1, \omega(\cdot))$ has been determined in Lemma \ref{compxxx} above. In the following we shall consider several constants, in general depending {\em at least} on the parameters $n,p,\ratio$; relevant dependence on additional parameters will be emphasized.

{\em Step 1: Basic dyadic sequence.} Referring to estimate \rif{CComp}, we select an integer $H \equiv H(n,p,\ratio)\geq 1$ such that
\eqn{sceltasmall}
$$
c_1\left(\frac{1}{H}\right)^{\beta} \leq \frac{1}{4}\,.
$$
We notice that the dependence of $H$ upon $n,p,\ratio$ comes from the similar dependence of $\beta$ and $c_1$ presented in Lemma \ref{compxxx}.
Applying \rif{CComp} on arbitrary balls $B_\varrho \equiv B_{R/2H} \subseteq B_{R/2} \subset
 B_R$ and using the fact that $\omega(\cdot)$ is non-decreasing we gain
\begin{eqnarray}
&&\nonumber \mean{B_{R/2H}} |D u- (D u)_{B_{R/2H}}|\, dx \leq  \frac{1}{4} \mean{B_{R}} |D u- (D u)_{B_{R}}|\, dx\\
&& \qquad \qquad \qquad  + c\left[\frac{|\mu|(B_{R})}{R^{n-1}}\right]^{\frac{1}{p-1}}+ c_2[L_1\omega(R)]^{\frac{2}{p}}\mean{B_{R}}(|D u|+s)\, dx\,,\label{CCompdopo0}
\end{eqnarray}
where $c,c_2$ depends only on $n,p,\ratio, H$ and therefore ultimately on $n,p,\ratio$.
By the elementary estimation
$$
\mean{B_{R}}(|D u|+s)\, dx\leq \mean{B_{R}}|D u-(D u)_{B_{R}}|\, dx + |(D u)_{B_{R}}|+s
$$
estimate \rif{CCompdopo0} turns to
\begin{eqnarray}
\nonumber&& \mean{B_{R/2H}} |D u- (D u)_{B_{R/2H}}|\, dx   \leq  \left(\frac{1}{4} +
c_2[L_1\omega(R)]^{\frac{2}{p}}\right)\mean{B_{R}} |Du- (D u)_{B_{R}}|\, dx\\
&&\hspace{4cm}  + c\left[\frac{|\mu|(B_{R})}{R^{n-1}}\right]^{\frac{1}{p-1}}+ c[L_1\omega(R)]^{\frac{2}{p}}\left(|(Du)_{B_R}|+s\right).\label{CCompdopo1}
\end{eqnarray}
We start reducing the value of $\tilde{R}$: we take $\tilde{R}$, depending only on $n, p,\ratio, L_1$ and $\omega(\cdot)$, small enough in order to get
\eqn{sceltasmall2}
$$
c_2[L_1\omega(\tilde{R})]^{\frac{2}{p}}\leq \frac{1}{4}\,.
$$
Notice that in order to establish the claimed dependence of $\tilde{R}$ upon the various parameters we have used that $c_2$ depends in turn on $n,p,\ratio$.
By \rif{raggio}, as $\omega(\cdot)$ is non-decreasing, merging \rif{CCompdopo1} and \rif{sceltasmall2} yields
\begin{eqnarray}
&&\nonumber \mean{B_{R/2H}} |Du- (Du)_{B_{R/2H}}|\, dx \leq  \frac{1}{2}\mean{B_{R}} |Du- (Du)_{B_{R}}|\, dx\\
&& \hspace{4cm} + c\left[\frac{|\mu|(B_{R})}{R^{n-1}}\right]^{\frac{1}{p-1}}+ c[L_1\omega(R)]^{\frac{2}{p}}\left(|(Du)_{B_R}|+s\right)\,.\label{CCompdopo}
\end{eqnarray}
We now fix a ball $B(x_0,2R)\subseteq
 \Omega$ as in the statement of Theorem \ref{mainx}; for $i \in \{0,1,2,\ldots\}$, let us define
\eqn{defiBK}
$$
B_i:= B(x_0,R/(2H)^i):=B(x_0,R_i)\qquad \mbox{and}\qquad k_i:=|(Du)_{B_i}|\,.
$$
For every integer $m \in \en$ we write
\begin{eqnarray}
\nonumber k_{m+1}= \sum_{i=0}^m (k_{i+1}-k_i) + k_0 & \leq & \sum_{i=0}^m \mean{B_{i+1}} |Du- (Du)_{B_{i}}|\, dx + k_0\nonumber \\
&\leq &\sum_{i=0}^m (2H)^{n}\mean{B_i} |Du- (Du)_{B_{i}}|\, dx+ k_0\,.\label{CC1}
\end{eqnarray}
Therefore, defining
\eqn{deffia}
$$
A_i:=\mean{B_i} |Du- (Du)_{B_{i}}|\, dx
$$
inequality \rif{CC1} becomes
\eqn{CC3}
$$
k_{m+1}  \leq (2H)^{n}\sum_{i=0}^m A_i + k_0\,.
$$
To estimate the right hand side of the previous inequality we observe that \rif{CCompdopo} used with $R \equiv R_{i-1}$
yields, whenever $i \geq 1$
\eqn{consid}
$$ A_{i}  \leq \frac{1}{2} A_{i-1} + c\left[\frac{|\mu|(B_{i-1})}{R_{i-1}^{n-1}}\right]^{\frac{1}{p-1}}  + c\,[L_1\omega(R_{i-1})]^{\frac{2}{p}}(k_{i-1}+s)\,,
$$
with $c \equiv c (n,p,\ratio)$, where we have taken into account \rif{defiBK} and that $H$ depends on $n,p,\ratio$. We now consider \rif{consid} for $i \in\{1,\ldots,m\}$ and sum up over $i$, thereby gaining
$$ \sum_{i=1}^m A_{i}  \leq \frac{1}{2} \sum_{i=0}^{m-1}A_{i} + c\sum_{i=0}^{m-1}\left[\frac{|\mu|(B_{i})}{R_{i}^{n-1}}\right]^{\frac{1}{p-1}}  + c\sum_{i=0}^{m-1}\,[L_1\omega(R_{i})]^{\frac{2}{p}}(k_{i}+s)\,,
$$
and therefore
$$ \sum_{i=1}^m A_{i}  \leq A_{0} + 2c\sum_{i=0}^{m-1}\left[\frac{|\mu|(B_{i})}{R_{i}^{n-1}}\right]^{\frac{1}{p-1}}  + 2c\sum_{i=0}^{m-1}\,[L_1\omega(R_{i})]^{\frac{2}{p}}(k_{i}+s)\,.
$$
Using the last inequality in \rif{CC3} yields, for every integer $m \geq 1$
\eqn{qquu2}
$$
k_{m+1}\leq c\left(A_0 + k_0 +  \sum_{i=0}^{m-1} \left[\frac{|\mu|(B_i)}{R_{i}^{n-1}}\right]^{\frac{1}{p-1}}\right)  + c\sum_{i=0}^{m-1}  [L_1\omega(R_{i})]^{\frac{2}{p}}(k_i+s)\,,
$$
as usual for a constant $c \equiv c(n,p,\ratio)$.

{\em Step 2: Wolff Potentials and Dini-continuity.} We notice that
\begin{eqnarray*}
 \sum_{i=0}^{m-1} \left[\frac{|\mu|(B_i)}{R_{i}^{n-1}}\right]^{\frac{1}{p-1}}&\leq &
\sum_{i=0}^\infty\left[\frac{|\mu|(B_i)}{R_{i}^{n-1}}\right]^{\frac{1}{p-1}} \\
& \leq &\frac{2^{\frac{n-1}{p-1}}}{\log 2}\int_R^{2R}  \left[\frac{|\mu|(B(x_0,\varrho))}{\varrho^{n-1}}\right]^{\frac{1}{p-1}}\, \frac{d\varrho}{\varrho} + \sum_{i=0}^\infty\left[\frac{|\mu|(B_{i+1})}{R_{i+1}^{n-1}}\right]^{\frac{1}{p-1}}\\ & \leq &
\frac{2^{\frac{n-1}{p-1}}}{\log 2} \int_R^{2R}  \left[\frac{|\mu|(B(x_0,\varrho))}{\varrho^{n-1}}\right]^{\frac{1}{p-1}}\, \frac{d\varrho}{\varrho} \\ &&  \qquad + \frac{(2H)^{\frac{n-1}{p-1}}}{\log 2H}\sum_{i=0}^\infty\int_{R_{i+1}}^{R_i}  \left[\frac{|\mu|(B(x_0,\varrho))}{\varrho^{n-1}}\right]^{\frac{1}{p-1}}\, \frac{d\varrho}{\varrho}\\&\leq &
\left(\frac{2^{\frac{n-1}{p-1}}}{\log 2}+\frac{(2H)^{\frac{n-1}{p-1}}}{\log 2H}\right)\npma(x_0,2R)\,.
\end{eqnarray*}
Recalling the dependence of $H$ determined in \rif{sceltasmall} we conclude that there exists a constant $c$ depending only on $n,p,\ratio$ such that
\eqn{primapot}
$$
\sum_{i=0}^{m-1} \left[\frac{|\mu|(B_i)}{R_{i}^{n-1}}\right]^{\frac{1}{p-1}}\leq c \npma(x_0,2R)
$$
holds whenever $m \in \en$. As for the last term in \rif{qquu2}, using the fact that $\omega(\cdot)$ is non-decreasing, we estimate
\begin{eqnarray*}
 \sum_{i=0}^{m-1}  [\omega(R_{i})]^{\frac{2}{p}}& \leq  &
\sum_{i=0}^{\infty
}  [\omega(R_{i})]^{\frac{2}{p}} \\ &\leq &\frac{1
}{\log 2}\int_R^{2R}  [\omega(\varrho)]^{\frac{2}{p}}\, \frac{d\varrho}{\varrho} + \sum_{i=0}^\infty[\omega(R_{i+1})]^{\frac{2}{p}}\\ & \leq &
\frac{1 }{\log 2}\int_R^{2R}  [\omega(\varrho)]^{\frac{2}{p}}\, \frac{d\varrho}{\varrho}+ \frac{1
}{\log 2H}\sum_{i=0}^\infty\int_{R_{i+1}}^{R_i} [\omega(\varrho)]^{\frac{2}{p}}\, \frac{d\varrho}{\varrho}\\&\leq &
 \left(\frac{1 }{\log 2}+\frac{1}{\log 2H}\right)\int_{0}^{2R} [\omega(\varrho)]^{\frac{2}{p}}\, \frac{d\varrho}{\varrho}\,.
\end{eqnarray*}
Recalling the definition of $d(\cdot)$ in \rif{intdini}, and the fact that $H\geq 1$ we have
\eqn{secondapot}
$$
 \sum_{i=0}^{m-1} [ \omega(R_{i})]^{\frac{2}{p}}\leq \frac{2d(2R) }{\log 2}\,.
$$
For later use, we now further restrict the value of $\tilde{R}$ in order to have
\eqn{sceltadi}
$$
L_1^{\frac{2}{p}}d(2\tilde{R})\leq 1 \,.
$$
In light of inequalities \rif{primapot} and \rif{secondapot}, 
\rif{qquu2} yields, for every $m\geq 1$
\eqn{mainestx2}
$$
k_{m+1}  \leq c_3M   + c_4\sum_{i=0}^{m-1}  [L_1\omega(R_{i})]^{\frac{2}{p}}k_i\,,
$$
where $c_3, c_4$ depend only on $n,p,\ratio$ and we have set
\eqn{Msetting}
$$
M := \mean{B(x_0,R)}(|Du|+s)\, dx + \npma(x_0,2R)\,.
$$
Notice that, using \rif{sceltadi} and H\"older's inequality, to get \rif{mainestx2} from \rif{qquu2} we have also estimated
\eqn{triest}
$$
A_0 + k_0+ d(2
R)s\leq c\mean{B(x_0,R)}(|Du|+s)\, dx \,.
$$
Finally, as in \rif{CC1}, by estimating
$
k_1 = (k_1-k_0) + k_0 \leq c A_0 + k_0,
$
and using \rif{triest}, we complement \rif{mainestx2} with
\eqn{starti}
$$
k_0 + k_1 \leq c_3M\,
$$
that we obtain by enlarging $c_3$ a bit.

{\em Step 3: Induction and conclusion.} We restrict the value of $\tilde{R}$ for the last time in order to have
\eqn{sceltadi2}
$$
d(2\tilde{R})\leq \frac{1}{8c_4L_1^{2/p}}
$$
where $c_4\equiv c_4(n,p,\ratio)$ is the constant appearing in \rif{mainestx2}; this, together with the choices \rif{sceltasmall2} and \rif{sceltadi2},  finally determines $\tilde{R}$ as a positive quantity depending only on $ n, p,\ratio, L_1, \omega(\cdot)$, as required in the statement of the Theorem. We now prove that the following inequality holds whenever $m \geq 1$:
\eqn{indu}
$$
k_{m+1}\leq 2c_3M\,,
$$
where $c_3$ and $M$ have been introduced in \rif{mainestx2} and \rif{Msetting}, respectively. We prove \rif{indu} by induction; the cases $m=-1, 0$ are settled in \rif{starti}; now we assume the validity of \rif{indu} whenever $m\leq \tilde{m}$, and prove it for $\tilde{m}+1$. We have, using \rif{mainestx2}, \rif{indu}, \rif{secondapot}, and \rif{sceltadi2}
\begin{eqnarray}
\nonumber k_{\tilde{m}+2}  & \leq  & c_3M   + c_4\sum_{i=0}^{\tilde{m}}  [L_1\omega(R_{i})]^{\frac{2}{p}}k_i\\ \nonumber & \leq & c_3M   + 2c_4c_3M\sum_{i=0}^{\tilde{m}}  [L_1\omega(R_{i})]^{\frac{2}{p}} \\ & \leq  & c_3M + \frac{4c_4c_3L_1^{2/p}}{\log 2}\,d(2R)M \nonumber \\ &\leq & c_3M + 8c_4c_3L_1^{2/p}d(2R)M\nonumber\\
&\le & c_3M + c_3M = 2c_3M\,,\label{mainestx3}
\end{eqnarray}
so that \rif{indu} holds whenever $m \in \en$. We now recall that, being $Du$ assumed to be a continuous vector field, for every $x_0 \in \Omega$ it holds that
\eqn{lebe}
$$
|Du(x_0)|= \lim_{m \to \infty} k_{m+1}\leq 2c_3M\,.
$$ Merging the last inequality with \rif{Msetting} finally yields \rif{mainestx}. For the case with no $x$-dependence in the vector field $a(\cdot)$ we just notice that in this case the last term in \rif{CCompdopo0} does not appear, and the same happens in \rif{CCompdopo}, therefore no smallness assumption of the type \rif{sceltasmall2} or \rif{sceltadi2} are required, and all the estimates in the iteration procedure work with no restriction of the type \rif{raggio}. The final outcome is that estimate \rif{mainestx} holds for every ball $B(x_0,2R)\subseteq \Omega$. We finally observe that the only point in this proof where we used that $u \in C^1(\Omega)$ was at the very end, in order to assert \rif{lebe} at every point $x_0$; otherwise we could have just assumed $u \in W^{1,p}_{\loc}(\Omega)$ to get \rif{lebe} almost everywhere.
\end{proof}
\section{Parabolic estimates}\label{paraproof}
This section is devoted to the proof of Theorems \ref{mainp} and \ref{mainp2}, and therefore we shall in general argue under assumptions \rif{par1} and \rif{par2}, respectively. 
We observe that assumptions \rif{par1}$_{1}$-\rif{par1}$_{2}$ obviously imply \rif{par2}, and this fact will be implicitly used several times in the following.
\subsection{Basic estimates from Nash-Moser's theory}\label{pp1} Here we shall emphasize a few properties of solutions $\tu \in C^{0}(t_1, t_2;L^{2}(B(\bar{x},\gamma)))\cap L^{2}(t_1,t_2;W^{1,2}(B(\bar{x},\gamma)))$ for $\gamma>0$, to homogeneous , non-linear, parabolic equations of the type
\eqn{basicparbb}
$$
\tu_t - \divo \ b(x,t,D\tu)=0\,,
$$
therefore considered in the basic cylinder $\tilde{Q}\equiv B(\bar{x},\gamma)\times (t_1, t_2)$. The vector field $b \colon \tilde{Q} \times \er^n \to \er^n$ is supposed to be only Carath\`eodory regular - in particular the dependence on the coefficients $(x,t)$ is merely measurable. The next result essentially encodes Nash-Moser's regularity for parabolic equations.
\begin{prop}\label{decbase} Let $\tu \in C^{0}(t_1, t_2;L^{2}(B(\bar{x},\gamma)))\cap L^{2}(t_1,t_2;W^{1,2}(B(\bar{x},\gamma)))$ be a weak solution to \trif{basicparbb}, under the assumptions \trif{par2}. Then $u \in C^{0,\beta}_{\loc}(\tilde{Q})$ for some $\beta \in (0,1]$ depending only on $n, \nu, L$, and moreover, there exists a constant $c$, again depending only on $n, \nu, L$, such that the following inequality:
\eqn{estpbase}
$$
\mean{Q_\varrho} |\tu - (\tu)_{Q_\varrho}|\, dx \, dt \leq c \left(\frac{\varrho}{R}\right)^{\beta}\mean{Q_R} |\tu - (\tu)_{Q_R}|\, dx \, dt +c sR
$$
holds whenever $Q_\varrho \subseteq Q_R \subseteq \tilde{Q}$ are parabolic cylinders with the same vertex\,.
\end{prop}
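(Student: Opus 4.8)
The plan is to read off \trif{estpbase} from the classical parabolic De Giorgi--Nash--Moser theory, the only genuinely new point being to keep track of the structure constant $s$, which under \trif{par2} enters merely as a lower order perturbation of the homogeneous scheme. Two preliminary reductions are convenient: since adding a constant to $\tu$ leaves \trif{basicparbb} unchanged, for the reference cylinder $Q_R$ I may work with $\tu-(\tu)_{Q_R}$ in place of $\tu$; and the range $R/2\le\varrho\le R$ is disposed of at once by the elementary volume comparison $|Q_\varrho|\ge 2^{-(n+2)}|Q_R|$ together with $\mean{Q_\varrho}|\tu-(\tu)_{Q_\varrho}|\le 2\mean{Q_\varrho}|\tu-(\tu)_{Q_R}|$. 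Hence from now on $\varrho\le R/2$.

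The first ingredient is a Caccioppoli inequality for the truncations $(\tu-k)_{\pm}$, obtained by testing \trif{basicparbb} --- after the usual Steklov time--regularisation, as in \trif{steky}--\trif{steky2} --- with $(\tu-k)_{\pm}\zeta^{2}$, where $\zeta$ is a standard parabolic cut--off. The monotonicity \trif{par2}$_2$, used with $z_1=0$, yields the coercivity $\langle b(x,t,z),z\rangle\ge\frac{\nu}{2}|z|^{2}-cs^{2}$, while the bound $|b(x,t,0)|\le Ls$ coming from \trif{par2}$_1$ controls the remaining terms; all the contributions carrying $s$ are absorbed via Young's inequality into an additive error of size $cs^{2}(R^{-2}+1)\,|Q_{R}\cap\{(\tu-k)_{\pm}>0\}|$, which is exactly the right scaling. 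Thus $\tu$ lies in a parabolic De Giorgi class with an $s$--forcing of the correct size.

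From here the two classical outputs of the parabolic iteration apply, with all constants depending only on $n,\nu,L$ (see \cite{D} and the standard parabolic De Giorgi--Nash--Moser theory; the passage to $L^1$--means on the right--hand sides is the self--improvement given, in its parabolic form, by Lemma \ref{revq}). First, the local boundedness estimate, applied to $v:=\tu-(\tu)_{Q_R}$, which solves the same equation:
$$
\osc_{Q_{R/2}}\tu\ \le\ 2\sup_{Q_{R/2}}|v|\ \le\ c\,\mean{Q_R}|\tu-(\tu)_{Q_R}|\,dx\,dt+csR\,.
$$
Second, the reduction of oscillation: iterating the single decay step $\osc_{Q_{\tau/2}}\tu\le\gamma\,\osc_{Q_{\tau}}\tu+cs\tau$, valid for every $Q_{\tau}\subseteq Q_{R/2}$ with a fixed $\gamma\equiv\gamma(n,\nu,L)\in(0,1)$, through the standard geometric iteration lemma gives
$$
\osc_{Q_\varrho}\tu\ \le\ c\left(\frac{\varrho}{R}\right)^{\beta}\osc_{Q_{R/2}}\tu+csR\qquad\mbox{for }\ \varrho\le R/2\,,
$$
with $\beta:=\min\{-\log_{2}\gamma,\,1\}$, where one uses $(\varrho/R)^{\beta}\le 1$ to keep the $s$--error in the stated form. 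Combining the two displays with the trivial inequality $\mean{Q_\varrho}|\tu-(\tu)_{Q_\varrho}|\le\osc_{Q_\varrho}\tu$ yields \trif{estpbase}; the H\"older continuity $\tu\in C^{0,\beta}_{\loc}(\tilde{Q})$ is then the parabolic Campanato characterisation applied to \trif{estpbase}.

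The main obstacle is the single decay step $\osc_{Q_{\tau/2}}\tu\le\gamma\,\osc_{Q_{\tau}}\tu+cs\tau$. In the parabolic setting this is substantially more delicate than its elliptic analogue: the natural Poincar\'e and isoperimetric inequalities do not respect the anisotropic scaling of the cylinders $Q_\varrho$, so one must run the De Giorgi level--set argument together with the logarithmic estimates --- or, alternatively, invoke Moser's parabolic weak Harnack inequality --- in order to propagate the smallness of a super/sub--level set forward in time. Carrying the $s^{2}$--error terms through this iteration while keeping every constant of the form $c(n,\nu,L)$ is routine but must be done carefully; this is the only step where genuinely parabolic phenomena intervene, the rest being a transcription of the elliptic scheme of Section \ref{ellipticse}.
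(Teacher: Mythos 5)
Your argument is correct and follows essentially the same route as the paper: the paper likewise derives the coercivity $\langle b(x,t,z),z\rangle\ge(\nu/2)|z|^2-cs^2$ from \trif{par2}, invokes the classical parabolic De Giorgi--Nash--Moser theory (in the form of Lieberman's Theorems 6.17 and 6.28) for the sup bound with $L^1$ mean and the oscillation decay with $csR$ error, applies the sup bound to $\tu-m$ with $m=(\tu)_{Q_R}$, and concludes exactly as you do via $\mean{Q_\varrho}|\tu-(\tu)_{Q_\varrho}|\le\osc_{Q_\varrho}\tu$. The only difference is that you sketch the Caccioppoli/De Giorgi class set-up behind the cited classical outputs, whereas the paper cites them directly.
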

\begin{proof} We shall rely on the precise estimates given by Lieberman in \cite[Chapter 6]{lbook}; this is in turn an extension of the classical Nash-Moser's theory for linear parabolic equations \cite{moser, nash}. We first observe that by a simple application of Young's inequality from \rif{par2} it also follows
\eqn{crescitap}
$$
(\nu/2)|z|^2 \leq \langle b(x,t,z), z\rangle + cs^2,
$$
where $c\equiv 2L^2/\nu$; compare with \rif{veramon}.
Therefore applying \cite{lbook} we have
\eqn{basicestp}
$$
\oscs_{Q_\varrho}\, \tu \leq c \left(\frac{\varrho}{R/2}\right)^{\beta}\oscs_{Q_{R/2}}\, \tu + csR\qquad \mbox{for every} \  \varrho \leq R/2\,,
$$
for constants $c\geq 1$ and $\beta \in (0,1]$, depending only on $n,\nu, L$. Here we are adopting the standard notation
$$
\oscs_{Q_\varrho} \, \tu= \sup_{Q_\varrho} \, |\tu(x,t)-\tu(\tilde x,\tilde t )|\qquad \qquad (x,t),(\tilde x,\tilde t )\in Q_\varrho\,.
$$
Estimate \rif{basicestp} maybe necessitates a few clarifications; it can be inferred from \cite[Theorem 6.28]{lbook} once the following things are taken into account. First, one has to adapt the estimates derived in \cite[Chapter 6]{lbook}, considering the growth assumption \rif{crescitap} and to compare this with \cite[(6.20)]{lbook} and \cite[(6.28)]{lbook}. Second, the estimates in \cite[Chapter 6]{lbook} are often stated for linear operators, but the linearity is actually irrelevant, as growth conditions are the only relevant thing, therefore all the estimates can be derived for general quasilinear operators; see the comments at the beginning of \cite[Section 6.5]{lbook}. Again from \cite[Theorem 6.17]{lbook} we gain
\eqn{estsp}
$$
\sup_{Q_{R/2}} \, |\tilde u|\leq c \mean{Q_R} |\tilde u|\, dx \, dt + csR\,,
$$
again for a constant $c$ depending only on $n,\nu, L$, and whenever $Q_{R}\subseteq \tilde{Q}$. With $m \in \er$ being a fixed number, we observe that $\tu - m$ still solves \rif{basicparbb}, therefore estimate \rif{estsp} gives
\eqn{estsp2}
$$
\sup_{Q_{R/2}} \, |\tu-m|\leq c \mean{Q_R} |\tu-m|\, dx \, dt + csR\,.
$$
On the other hand we have
\eqn{estsp3}
$$
\oscs_{Q_{R/2}}\, (\tu - m) \leq 2 \sup_{Q_{R/2}} \, |\tu-m|\leq c \mean{Q_R} |\tu-m|\, dx \, dt + csR\,,
$$
and, in turn applying estimate \rif{basicestp} to $\tu - m$ we find, by mean of \rif{estsp3}
$$
\oscs_{Q_\varrho}\, \tu = \oscs_{Q_{\varrho}}\, (\tu - m)  \leq c \left(\frac{\varrho}{R}\right)^{\beta}\mean{Q_R} |\tu-m|\, dx \, dt  + csR\,.
$$
Noticing that
$$
\mean{Q_\varrho} |\tu - (\tu)_{Q_\varrho}|\, dx \, dt\leq \oscs_{Q_\varrho}\, \tu
$$
we finally obtain
$$
\mean{Q_\varrho} |\tu - (\tu)_{Q_\varrho}|\, dx \, dt\leq c \left(\frac{\varrho}{R}\right)^{\beta}\mean{Q_R} |\tu-m|\, dx \, dt  + csR\,.
$$
At this stage \rif{estpbase} follows choosing $m=(\tu)_{Q_R}$\,.
\end{proof}
\subsection{Comparison estimates}\label{pp2} In the rest of Section \ref{pp2} we keep fixed a symmetric parabolic cylinder $Q(x_0, t_0,2R)
\equiv Q_{2R} \subset\Omega_T$. We start considering the unique solution
\eqn{defiw}
$$
   w\in C^0(t_0-4R^2,t_0; L^2(B(x_0,2R)))\cap L^2(t_0-4R^2,t_0
    ;W^{1,2}(B(x_0,2R)))
$$ to the following Cauchy-Dirichlet problem:
\begin{equation}\label{CD-local}
    \left\{
    \begin{array}{cc}
    w_t-\divo \, a(x,t,Dw)=0&\mbox{in $ Q_{2R}$}\\[3pt]
    w=u&\mbox{on $\partial_{\rm par} Q_{2R}$\,.}
    \end{array}
    \right.
\end{equation}
\begin{lemma}\label{comp-lem0}
Let $u\in C^0(-T,0; L^2(\Omega))\cap L^2(-T,0;W^{1,2}(\Omega ))$ be a solution to \trif{basicpar} with $\mu \in L^2(\Omega_T)$,
under the assumptions \trif{par2}, and let $w$ be defined in \trif{defiw}-\trif{CD-local}. Then there exists a constant $c=c(n, \nu)$ such that the following estimates hold:
\begin{equation}\label{comparison}
    \mean{Q_{2R}} |Du-Dw|\, dx \, dt \leq \frac{c|\mu|(Q_{2R})}{R^{N-1}}\, ,
\end{equation}
\begin{equation}\label{comparison2}
   \mean{Q_{2R}} |u-w|\, dx \, dt \leq \frac{c|\mu|(Q_{2R})}{R^{N-2}}\, .
\end{equation}
\end{lemma}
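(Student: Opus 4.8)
The plan is to transcribe to the parabolic setting the truncation-and-summation argument used for the elliptic Lemma \ref{coco1} — itself a revisitation of the methods of Boccardo \& Gall\"ouet \cite{BG1, BG2} — the role of the plain Sobolev inequality being now taken over by the parabolic Gagliardo--Nirenberg embedding. First I would normalize, exactly as in Step 3 of the proof of Lemma \ref{coco1}. The parabolic dilation $\tu(y,\sigma):= u(x_0+2Ry,t_0+4R^2\sigma)/(2R)$ (and $\tilde w$ analogously), together with $\tta(y,\sigma,z):=a(x_0+2Ry,t_0+4R^2\sigma,z)$ and $\tmu(y,\sigma):=2R\,\mu(x_0+2Ry,t_0+4R^2\sigma)$, carries the problem onto $Q_{2R}\equiv Q_1= B_1\times(-1,0)$, preserving \rif{par2}$_1$--\rif{par2}$_2$ with the same $\nu,L$ and the Cauchy--Dirichlet structure $\tilde w=\tu$ on $\partial_{\rm par}Q_1$; a further rescaling by $A:=|\mu|(Q_1)$ — replacing $a$ by $A^{-1}a(\cdot,\cdot,A\cdot)$, $u$ by $A^{-1}u$ and $\mu$ by $A^{-1}\mu$, the case $A=0$ being trivial since then $u\equiv w$ — allows me to assume in addition $|\mu|(Q_1)=1$. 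Since \rif{par2}$_2$ carries no $s$ and the estimates \rif{comparison}--\rif{comparison2} are $s$-free, none of this interferes with the parameter $s$.

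I would then set $z:=u-w$. As $\mu\in L^2$, both $u$ and $w$ belong to the energy space $C^0(-1,0;L^2(B_1))\cap L^2(-1,0;W^{1,2}(B_1))$, hence so does $z$, which moreover vanishes on $\partial_{\rm par}Q_1$, i.e.\ $z(\cdot,t)\in W^{1,2}_0(B_1)$ and $z(\cdot,-1)=0$; also $a(\cdot,\cdot,Du),a(\cdot,\cdot,Dw)\in L^2(Q_1)$ by \rif{par2}$_1$, so that $z$ solves, distributionally,
$$
\partial_t z - \divo\big(a(x,t,Du)-a(x,t,Dw)\big)=\mu\,.
$$
I would test this equation — through the Steklov-averaged formulation \rif{steky}--\rif{steky2} and the limit $h\downarrow 0$ — with the Lipschitz truncation $T_k(z)$ of \rif{troncamenti}. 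With $\hat T_k(\sigma):=\int_0^\sigma T_k(\zeta)\,d\zeta$, one has $\hat T_k(\sigma)\geq\frac{1}{2}[T_k(\sigma)]^2\geq 0$, so the vanishing initial datum turns the parabolic contribution into $\int_{B_1}\hat T_k(z(\cdot,\tau))\,dx\geq 0$, while \rif{par2}$_2$ applied pointwise with $z_2=Du$, $z_1=Dw$ bounds the remaining term below by $\nu\int|DT_k(z)|^2$; as the right-hand side equals $\int_{Q_1}T_k(z)\,d\mu\leq k\,|\mu|(Q_1)=k$, this gives, for every $k>0$,
$$
\sup_{-1<\tau<0}\int_{B_1}|T_k(z)(\cdot,\tau)|^2\,dx\ \leq\ 2k\,,\qquad\qquad \int_{Q_1}|DT_k(z)|^2\,dx\,dt\ \leq\ k/\nu\,.
$$

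Feeding these into the parabolic Gagliardo--Nirenberg inequality (a classical ingredient)
$$
\int_{Q_1}|T_k(z)|^{\frac{2(n+2)}{n}}\,dx\,dt\ \leq\ c(n)\Big(\sup_{-1<\tau<0}\int_{B_1}|T_k(z)(\cdot,\tau)|^2\,dx\Big)^{\frac{2}{n}}\int_{Q_1}|DT_k(z)|^2\,dx\,dt\,,
$$
legitimate because $T_k(z)\in L^2(-1,0;W^{1,2}_0(B_1))$, I obtain $\int_{Q_1}|T_k(z)|^{2(n+2)/n}\leq c(n,\nu)\,k^{(n+2)/n}$, whence — estimating the left-hand side from below over $\{|z|>k\}$, where $|T_{2k}(z)|>k$ — the level-set decay
$$
\big|\{(x,t)\in Q_1 : |z(x,t)|>k\}\big|\ \leq\ \frac{c(n,\nu)}{k^{(n+2)/n}}\qquad\mbox{for all}\ k\geq 1\,.
$$
Now \rif{comparison2} is immediate, since $(n+2)/n>1$: $\int_{Q_1}|u-w|\,dx\,dt=\int_0^\infty|\{|z|>t\}|\,dt\leq|Q_1|+\int_1^\infty c(n,\nu)\,t^{-(n+2)/n}\,dt\leq c(n,\nu)$. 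For \rif{comparison} I would split $\int_{Q_1}|Du-Dw|$ over $\{|z|\leq1\}$ and the dyadic rings $\{2^j<|z|\leq 2^{j+1}\}$, $j\geq0$; on the $j$-th ring, H\"older's inequality, the Caccioppoli bound $\int_{\{|z|\leq 2^{j+1}\}}|Du-Dw|^2\leq 2^{j+1}/\nu$ and the decay $|\{|z|>2^j\}|\leq c(n,\nu)\,2^{-j(n+2)/n}$ produce a contribution $\leq c(n,\nu)\,2^{-j/n}$, so that the geometric series sums to $\int_{Q_1}|Du-Dw|\,dx\,dt\leq c(n,\nu)$. Undoing the two rescalings of the first paragraph turns these into \rif{comparison}--\rif{comparison2}.

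The only genuinely delicate step — everything else being a routine parabolic transcription of the Boccardo--Gall\"ouet scheme — is the rigorous treatment of the time derivative: one must justify, via the Steklov identity \rif{steky2} and the passage $h\downarrow0$, that testing the equation for $z$ with the truncations $T_k(z)$ is admissible and produces exactly the nonnegative boundary term $\int_{B_1}\hat T_k(z(\cdot,\tau))\,dx$. This is precisely where the hypotheses $\mu\in L^2$, $u,w$ in the energy space, and the vanishing initial datum enter, and it is the point demanding the most care.
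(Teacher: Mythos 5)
Your argument is correct, and it reaches \rif{comparison}--\rif{comparison2} by a genuinely different mechanism than the paper, although both live inside the Boccardo--Gallou\"et circle of ideas. The paper tests the difference equation with the \emph{unit band} truncations $\Phi_k=T_1(s-T_k(s))$ of \rif{troncamenti}, which yields the $k$-uniform bounds $\sup_\tau\|u-w\|_{L^1(B)}\le c(n)$ and $\int_{A_k}|Du-Dw|^2\le 1/\nu$; it then forms the weighted integral $\int_Q|Dz|^2(1+|z|)^{-\lambda}$, applies Cauchy--Schwarz slicewise together with the Gagliardo--Nirenberg interpolation $\|v\|_{L^\lambda(B)}\lesssim\|Dv\|_{L^1}^\theta\|v\|_{L^1}^{1-\theta}$ with $\lambda=(n+1)/n$, and closes by absorbing $\int_Q|Dz|$ via Young's inequality; \rif{comparison2} is then obtained a posteriori from \rif{comparison} by slicewise Poincar\'e. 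You instead test with the plain truncations $T_k(z)$, obtain the $k$-growing energy bounds $\sup_\tau\|T_k(z)\|_{L^2}^2\le 2k$ and $\nu\int|DT_k(z)|^2\le k$, feed them into the parabolic embedding $L^\infty(L^2)\cap L^2(W^{1,2}_0)\hookrightarrow L^{2N/n}$, deduce the Marcinkiewicz decay $|\{|z|>k\}|\le c\,k^{-N/n}$, and sum dyadically -- essentially transplanting to the parabolic setting the decomposition over the sets $C_k$ used in the paper's own \emph{elliptic} Lemma \ref{coco1}, rather than following the weighted-gradient route. Your version buys a direct proof of \rif{comparison2} (no Poincar\'e needed) and an explicit intermediate Marcinkiewicz bound on $u-w$; the paper's version avoids level-set bookkeeping and closes in one absorption step. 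All your exponents check out ($N/n>1$ for the $L^1$ bound on $z$; the ring contribution $2^{-j/n}$ is summable), the constants depend only on $n,\nu$ as required, and you correctly isolate the Steklov treatment of $\partial_t z$ -- which the paper carries out in \rif{CD-Steklov-test}--\rif{first} -- as the one step requiring genuine care.
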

\begin{proof}

{\em Step 1: Universal estimate}. Here we assume $(x_0,t_0)=(0,0)$ and $2R =1$ - that is $Q_{2R} \equiv Q_1$ - and that $|\mu|(Q_1)= 1$, and prove that the following universal inequality holds:
\eqn{univestimate}
$$
    \int_{Q}|Du-Dw|\, dx \, dt\leq  \frac{c(n)}{\nu}\,.$$
Our starting point here will be the parabolic estimates developed in \cite{Bmany}. As described in Section 2,
we use the Steklov-averages formulation of both \rif{basicpar} and (\ref{CD-local})$_1$, i.e., for every  $t\in(-1,0)$ and $t+h < 0$ there holds
\begin{equation}\label{CD-Steklov}
    \int_{B}\Big( \partial_tu_h(\cdot ,t)\varphi +\langle [a(\cdot ,t,Du)]_h,D\varphi\rangle\Big)\, dx
    =\int_{B}\varphi\mu_h(x, t)\, dx
\end{equation}
and
\begin{equation}\label{CD-loc-Steklov}
    \int_{B}\Big( \partial_tw_h(\cdot ,t)\varphi +\langle [a(\cdot ,t,Dw)]_h,D\varphi\rangle\Big)\, dx
    =0
\end{equation}
for every $\varphi\in C_0^\infty (B)$ with compact support, and by density whenever $\varphi\in W^{1,2}_0 (B)$. The initial datum
of $w_h$ is here taken in the sense of $L^2(B)$, which means that $ w_h(\cdot ,-1)\to u (\cdot,-1)$
in $L^2(B)$ when $h\downarrow 0$, and since $u\in C^0([-1,0); L^2(B))$, this implies in particular
that
\begin{equation}\label{initial-datum}
    \lim_{h\downarrow 0}\| (u_h-w_h)(\cdot ,-1)\|_{L^2(B)}=0\,.
\end{equation}
With the notation fixed in \rif{troncamenti},
by $\Psi_k\colon\er\to\er$ we denote the following primitive
$
    \Psi_k(s):=\int_0^\tau\Phi_k(\kappa )\, d \kappa$ for $\tau\in\er$,
that for later use we compute explicitly:
$$
    \Psi_k(\tau)=
    \left\{
    \begin{array}{cc}
    \frac12 +(\tau-k-1)    &\tau\ge k+1\\[2pt]
    \frac12 (\tau-k)^2      &k<\tau<k+1\\[2pt]
    0                  &-k\leq \tau\le k\\[2pt]
    \frac12 (\tau+k)^2     &-k-1<\tau<-k\\[2pt]
    \frac12 -(\tau+k+1)   &\tau\le -k-1\,,
    \end{array}
    \right.
$$
and note that $\Psi_k\ge 0$. By testing the difference Steklov-formulations (\ref{CD-Steklov}),
(\ref{CD-loc-Steklov})
\begin{eqnarray}
    && \nonumber \int_{B}\Big( \partial_t(u_h-w_h)(x ,t)\varphi +\langle [a(\cdot ,t,Du)]_h- [a(\cdot ,t,Dw)]_h,D\varphi\rangle\Big)\, dx
    \\ &&\qquad =\int_{B}\varphi\mu_h(x, t)\, dx\, ,\label{CD-Steklov-diff}
\end{eqnarray}
with the choice
$
    \varphi (x, t):= \zeta (t)\Phi_k(u_h-w_h)(x,t)
$,
$x\in B$, where $\zeta(\cdot)$ is a smooth function, and then integrating the resulting equality on $(-1,0)$ with respect to $t$ we obtain
\begin{eqnarray}
    &&\int_{Q}\partial_t(u_h{-}w_h)\Phi_k(u_h{-}w_h)\zeta\, dx \, dt \nonumber\\
    && \qquad +\int_{Q}
    \langle [a(\cdot ,Du)]_h- [a(\cdot ,Dw)]_h,D \Phi_k(u_h{-}w_h)\rangle\zeta\, dx \, dt \nonumber\\
    && \qquad \qquad =\int_{Q}\Phi_k(u_h{-}w_h)\zeta\mu_h
    \, dx \, dt .\label{CD-Steklov-test}
\end{eqnarray}
With $\tau \in (-1,0)$ now choose $\zeta\in W^{1,\infty}(\er)$ as follows:
$$
    \zeta_{\ep}(\kappa)\equiv \zeta(\kappa)=
    \left\{
    \begin{array}{cc}
    1    &\kappa\le \tau\\[2pt]
    1-\frac{1}{\varepsilon} (t-\kappa) &\tau<\kappa<\tau+\varepsilon\\[2pt]
    0                   &\kappa>\tau+\varepsilon\,.
    \end{array}
    \right.
$$
Then, the first integral in (\ref{CD-Steklov-test}) can be rewritten in the form
\begin{align}\nonumber
    \int_{Q}\partial_t(u_h{-}w_h)&\Phi_k(u_h{-}w_h)\zeta\, dx \, dt\\
    &=
    \int_{Q}\partial_t\big[\Psi_k(u_h{-}w_h)\zeta\big]\, dx \, dt
    -
    \int_{Q}\Psi_k(u_h{-}w_h)\zeta_t\, dx \, dt \nonumber\\
    &=-\int_{B}\Psi_k(u_h{-}w_h)(x ,-1)\, dx
    -
    \int_{Q}\Psi_k(u_h{-}w_h)\zeta_t\, dx \, dt\,.\label{first}
\end{align}
Taking into account the special choice of $\zeta$ we see that the second integral appearing
on the right-hand side of (\ref{first}) converges to $\int_{B}\Psi_k(u_h{-}w_h)(x ,\tau)\, dx$
when $\varepsilon\downarrow 0$ for almost every $\tau \in (-1,0)$, while the first integral
vanishes as $h\downarrow 0$ in light of (\ref{initial-datum}). As we have seen above the initial datum
is taken in the sense of $L^2(B)$. Combining (\ref{first}) with (\ref{CD-Steklov-test})
and letting first $\varepsilon \downarrow 0$
and then $h\downarrow 0$ in \rif{first} we obtain
\begin{align*}\nonumber
    \int_{B}&\Psi_k(u{-}w)(x ,\tau)\, dx\\
    &+\int_{-1}^{\tau}\int_{B}
    \langle a(x,t ,Du){-} a(x,t ,Dw),D \Phi_k(u{-}w)\rangle\, dx \, dt
    =\int_{-1}^{\tau}\int_{B}\Phi_k(u{-}w)\mu\, dx \, dt
\end{align*}
for almost every every choice of $\tau\in (-1,0)$. This leads to
\begin{align}\nonumber
    \sup_{-1<\tau<0}\int_{B}&\Psi_k(u{-}w)(x ,\tau)\, dx\\ \nonumber
    &+\int_{Q}
    \langle a(x,t ,Du){-} a(x,t ,Dw),D \Phi_k(u{-}w)\rangle\, dx \, dt\\ & \qquad \qquad
    \le\int_{Q}|\Phi_k(u{-}w)||\mu|\, dx \, dt\, .\label{start}
\end{align}
We first exploit the properties of $\Psi_k$ in order to estimate the first term of the
left-hand side from below. We start with the obvious estimate
\eqn{e1}
$$
    \sup_{-1<\tau<0}\int_{B}\Psi_0(u{-}w)(\cdot ,\tau)\, dx\leq |\mu|(Q)\le 1\, ,
$$
which follows by applying (\ref{start}) for $k=0$; note also that $|\mu|(Q)= 1$ by assumption.
With $\tau$ being fixed, decomposing $$B=\{ x\in B: |(u{-}w)(x ,\tau)|\le 1\}\cup\{ x\in B: |(u{-}w)(x,\tau)|> 1\}=: B'_{\tau} \cup B''_{\tau}$$ we have for almost every $\tau\in (-1,0)$ that
\begin{align}
    \nonumber \int_{B}&\Psi_0(u{-}w)(x ,\tau)\, dx\\
    &=
    \int_{B'_{\tau}} {\textstyle \frac12} |(u{-}w)(x ,\tau)|^2\, dx
    +
    \int_{B''_{\tau}}   |(u{-}w)(x ,\tau)|\, dx- {\textstyle \frac12}|B''_{\tau}|
    \label{e2}
\end{align}
holds. On the other hand we trivially have
$
    \int_{B'_{\tau}}|(u{-}w)(x ,\tau)|\, dx\le |B|\,,
$
so that merging the last estimate with \rif{e1}-\rif{e2} we have
$$
    \int_{B}|(u{-}w)(x ,\tau)|\, dx\le {\textstyle \frac12}|B|+|\mu|(Q)\le c(n)
$$
which holds again for almost every $\tau \in (-1,0)$, and therefore we conclude with the $L^\infty{-}L^1$-bound
\begin{equation}\label{L-inf-L1}
    \|u-w\|_{L^\infty (-1,0;L^1(B))}:=\sup_{-1<\tau<0}\int_B|(u-w)(x,\tau)|\, dx
    \le c(n)\, .
\end{equation}
We now start exploiting the second integral appearing on the left-hand side
of (\ref{start}). For this we define
$$
    A_k:=\{ (x,t)\in Q:k\le |u(x,t)-w(x,t)|<k+1\}\,.
$$
Then, using the fact that $D\Phi_k(u-w)=Du-Dw$ on $A_k$ and $D\Phi_k(u-w)=0$ otherwise together
with \rif{par2}$_2$, $\Phi_k(\cdot)\le 1$ and $|\mu|(Q)= 1$ in (\ref{start}) we obtain
$$
     \nu \int_{A_k}|Du-Dw|^2\, dx \, dt\le
    \int_{A_k}
    \langle a(x,t ,Du){-} a(x,t ,Dw),D(u{-}w)\rangle\, dx \, dt
    \le 1
$$
so that the estimate
\begin{equation}\label{est-on-Ak}
    \int_{A_k}|Du-Dw|^2\, dx \, dt\le\frac{1}{\nu}
\end{equation}
holds for every $k\in\{0,1,2,\ldots\}$. Now, for $\lambda >1$ we estimate
\begin{align}\nonumber
    \int_{Q}\frac{|Du-Dw|^2}{(1+|u-w|)^\lambda}\, dx \, dt
    &=\sum_{k=0}^\infty \int_{A_k}\frac{|Du-Dw|^2}{(1+|u-w|)^\lambda}\, dx \, dt\\
    &\le \sum_{k=0}^\infty\frac{1}{(1+k)^\lambda}\int_{A_k}|Du-Dw|^2\, dx \, dt\le \frac{\lambda }{\nu (\lambda -1)}\, .\label{prev}
\end{align}
Here we have used in the last line the uniform estimate (\ref{est-on-Ak}). This allows us to argue
as follows: For almost every $\tau\in (-1,0)$ we have by the Cauchy-Schwartz inequality that
\begin{align*}
    \int_B|D(u&-w)(x ,\tau)|\, dx
    =\int_B\frac{|D(u-w)(x ,\tau)|}{(1+|(u-w)(x ,\tau)|)^{\lambda /2}}(1+|(u-w)(x,\tau)|)^{\lambda /2}\, dx\\
    &\le \bigg(\int_B\frac{|D(u-w)(x ,\tau)|^2}{(1+|(u-w)(x ,\tau)|)^{\lambda}}\, dx\bigg)^\frac{1}{2}
    \bigg(\int_B(c(n)+|(u-w)(x ,\tau)|)^{\lambda }\, dx\bigg)^\frac{1}{2}\,.
\end{align*}
Integrating the preceding inequality with respect to $\tau$  on $(-1,0)$ and using Cauchy-Schwartz with respect to
$\tau$ together with (\ref{prev}) then yields
\begin{align}\nonumber
    \int_{Q}|Du&{-}Dw|\, dx \, dt\\
    &\le \int_{-1}^0\bigg(\int_B\frac{|D(u{-}w)(x ,t)|^2}{(1+|(u{-}w)(x ,t)|)^{\lambda}}\, dx\bigg)^\frac{1}{2}
    \bigg(\int_B(1+|(u{-}w)(x ,t)|)^{\lambda }\, dx\bigg)^\frac{1}{2}\, dt\nonumber\\
    &\le \bigg(\int_Q\frac{|D(u{-}w)|^2}{(1+|u{-}w|)^{\lambda}}\, dx \, dt\bigg)^\frac{1}{2}
       \bigg(\int_{-1}^0\int_B(1+|(u{-}w)(x ,t)|)^{\lambda }\, dx\, dt\bigg)^\frac{1}{2} \nonumber\\
    &\le \sqrt{\frac{2^{\lambda {-}1}\lambda}{\nu (\lambda {-}1)}}~\bigg[1+\bigg(
    \int_{-1}^0\|(u{-}w)(x ,t)\|_{L^\lambda (B)}^\lambda\, dt\bigg)^\frac{1}{2}
    \bigg]\, .\label{prev1}
\end{align}
At this stage we use
a well-known version of the Gagliardo-Nirenberg embedding
theorem which reads in our setting as follows:
$$
    \|(u{-}w)(x ,t)\|_{L^\lambda (B)}\le c(n)\|D(u{-}w)(x ,t)\|_{L^1 (B)}^\theta
    \|(u{-}w)(x ,t)\|_{L^1 (B)}^{1-\theta}\, ,
$$
which holds for every choice of $\theta$ and $\lambda$ satisfying
$$
    0\le\theta\le 1,\qquad \qquad 1<\lambda <\infty ,\qquad \qquad \frac{1}{\lambda}=1-\frac{\theta}{n}\,.
$$
We note that we have $\lambda>1 $ at our disposal to ensure $\theta \in (0,1)$. Recalling the
$L^\infty{-}L^1$-estimate (\ref{L-inf-L1}) the second integral on the right-hand side of
the preceding inequality is bounded by $c(n)^{1-\theta}$, and therefore
$$
    \int_{-1}^0\|(u{-}w)(x ,t)\|_{L^\lambda (B)}^\lambda\, dt
    \le c(n)^{1+\lambda (1-\theta )}\int_{-1}^0\|D(u{-}w)(x ,t)\|_{L^1 (B)}^{\lambda\theta}\, dt\, .
$$
We now perform the choice of $\theta$ and $\lambda$. In order to have $\theta\lambda =1$, i.e.\
$\theta =\frac{1}{\lambda}$, the identity relating $\theta$ and $\lambda$ yields $\lambda =\frac{n+1}{n}$,
so that $\theta =\frac{n}{n+1}$. This implies that
$$
    \int_{-1}^0\|(u{-}w)(x ,t)\|_{L^{1+\frac{1}{n}} (B)}^{1+\frac{1}{n}}\, dt
    \le c(n)\int_{Q}|D(u{-}w)|\, dx \, dt\, .
$$
Inserting this last estimate in (\ref{prev1}), and using Young's inequality we arrive at the following inequality, which is turn implies  \rif{univestimate}
\begin{align*}
    \int_{Q}|Du-Dw|\, dx \, dt
    &\le \frac{c(n)}{\sqrt{\nu}} \bigg[ 1+\bigg(\int_{Q}|Du-Dw|\, dx \, dt\bigg)^\frac{1}{2}\bigg]\\
    &\le  \frac{c(n)}{\nu} +\frac12  \int_{Q}|Du-Dw|\, dx \, dt\, .
\end{align*}

{\em Step 2: General case and scaling}. Similarly to the elliptic case we first reduce to the case $Q_{2R}= Q(x_0,t_0;2R)\equiv Q$ by changing variables and passing to new solutions and vector fields. More precisely, for $(\tilde x, \tilde t)\in Q$
\eqn{change}
$$
    \left\{
    \begin{array}{c}
    \displaystyle \tilde u(\tilde x, \tilde t):=\frac{u(x_0+2R\tilde x,t_0+4R^2\tilde t)}{2R} \\[8 pt]  \displaystyle \tilde w(\tilde x, \tilde t):=\frac{w(x_0+2R\tilde x,t_0+4R^2\tilde t)}{2R} \\[8 pt]
    \tilde a(\tilde x, \tilde t,z):=a(x_0+2R\tilde x,t_0+4R^2\tilde t,z)\\ [3pt] \tilde \mu(\tilde x, \tilde t):=2R\mu(x_0+2R\tilde x,t_0+4R^2\tilde t).
    \end{array}
    \right.
$$
Then it follows that $\tilde u_{\tilde t} -\divo\,  \tilde a(\tilde x, \tilde t,D\tilde u)=\tilde \mu$
and $\tilde w_{\tilde t} -\divo\,  \tilde a(\tilde x, \tilde t,D\tilde w)= 0$ in $Q$, and $\tilde u=\tilde w$
on $\partial_{\rm par}Q$. Furthermore the new vector field
$\tilde a(\cdot)$ satisfies \rif{par2}.
To remove the additional
assumption $|\mu|(Q)=1$ we assume, as we can,
that $A:= |\mu|(Q)>0$ - otherwise there is nothing to prove since the strict monotonicity of the vector field would imply $u\equiv w$ - and then we re-scale $u$, $w$, $a(\cdot)$ and $\mu$ as follows:
$
    \bar u:=A^{-1}u$, $ \bar w:=A^{-1}w$, $\bar a(x,t,z):= A^{-1}a(x,t,Az)$, $\bar \mu:= A^{-1}\mu$.
Then, it is easily verified that $\bar u$ is a solution of $\bar u_t- \divo\, \bar a(x,t,\bar u)=\bar \mu$
on $Q$, that $\bar w$ solves $\bar w_t- \divo\, \bar a(x,t,\bar w)=0$
on $Q$, that $\bar u=\bar w$ on $\partial_{\rm par}Q$ and $ |\mu|(Q)=1$.
Moreover, $\bar a(\cdot)$ fulfills the
strict monotonicity condition \rif{par2} which is the only one used in a quantitative way
in the derivation of the universal comparison estimate \rif{univestimate}. Therefore estimate \rif{univestimate} applied to $\bar u- \bar{w}$ yields
$$\nu\int_{Q} |D\bar u-D\bar w|\, dx \, dt\le c(n)\,,$$ and re-scaling back this inequality from $\bar u-\bar w$
to $u-w$ leads us to the comparison estimate (\ref{comparison}) with $R=1$, and by the previous scaling \rif{comparison} is proved in the general case. In order to get \rif{comparison2} it is sufficient to recall that $(u-w)(\cdot, t) \in W^{1,2}_0(B(x_0,2R))$ for almost every $t \in (t_0-4R^2, t_0)$ and therefore applying Poincar\'e's inequality slicewise and integrating we gain
$$
\int_{Q(x_0,t_0;2R)} |u-w|\, dx\, dt  \leq c R \int_{Q(x_0,t_0;2R)} |Du-Dw|\, dx \, dt
$$
which together with \rif{comparison} implies \rif{comparison2}. This completes the proof.
\end{proof}
We now proceed with a further comparison estimate; after having defined the comparison solution $w$ in \rif{defiw} we define the unique solution
\eqn{defiv}
$$
   v\in C^0(t_0-R^2,t_0; L^2(B(x_0,R)))\cap L^2(t_0-R^2,t_0
    ;W^{1,2}(B(x_0,R)))
$$ of the following Cauchy-Dirichlet problem:
\begin{equation}\label{CD-localv}
    \left\{
    \begin{array}{cc}
    v_t-\divo \, a(x_0,t,Dv)=0&\mbox{in $ Q_{R}$}\\[3pt]
    v=w&\mbox{on $\partial_{\rm par} Q_{R}$\,.}
    \end{array}
    \right.
\end{equation}
We recall the reader's attention on the fact that in \rif{CD-localv} we have frozen the coefficients only with respect to the space variable.
\begin{lemma}\label{comp-lem}
Let $u\in C^0(-T,0; L^2(\Omega))\cap L^2(-T,0;W^{1,2}(\Omega))$ be a solution to \trif{basicpar} with $\mu \in L^1(\Omega_T)$,
under the assumptions \trif{par2}, let $w$ be defined in \trif{defiw}-\trif{CD-local}, and finally let $v$ be defined in \trif{defiv}-\trif{CD-localv}. Then there exists a constant $c\equiv c(n, \nu, L)$ such that the following estimate holds:
\begin{eqnarray}\label{comparisonpar}
    \nonumber\mean{Q_{R}} |Du-Dv|\, dx \, dt & \leq & c\left[1+L_1\omega(R)\right]\frac{|\mu|(Q_{2R})}{R^{N-1}}\\ &&\qquad + cL_1\omega(R)\mean{Q_{2R}} (|Du|+s)\, dx \, dt \, .
\end{eqnarray}
\end{lemma}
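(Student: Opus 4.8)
The plan is to reproduce the scheme of the elliptic comparison Lemma~\ref{compx}, with the simplification that no interior decay estimate is needed at this stage: I would only compare the two \emph{homogeneous} solutions $w$ and $v$ in energy form, and then feed in the $u$--$w$ comparison \rif{comparison} of Lemma~\ref{comp-lem0}. Concretely I would split the argument into three steps: (i) an energy bound for $Dw-Dv$ that exploits the fact that in \rif{CD-localv} the coefficients are frozen only in the spatial variable; (ii) an upgrade of the $L^2$-average of $Dw$ on $Q_R$ to an $L^1$-average on $Q_{2R}$ via a parabolic reverse H\"older inequality; (iii) the triangle inequality together with \rif{comparison} and the volume ratio $|Q_{2R}|/|Q_R|=2^N$.

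\emph{Step 1.} Both $w$ and $v$ lie in the natural energy class and solve, respectively, $w_t-\divo\, a(x,t,Dw)=0$ in $Q_{2R}$ and $v_t-\divo\, a(x_0,t,Dv)=0$ in $Q_R$, with $w=v$ on $\partial_{\rm par}Q_R$; in particular $(w-v)(\cdot,t)\in W^{1,2}_0(B_R)$ for a.e.\ $t$ and $(w-v)(\cdot,t_0-R^2)=0$. I would subtract the two Steklov-averaged weak formulations, test the difference with $\zeta(t)(w-v)$ for a time cut-off $\zeta$, and let the Steklov parameter and $\zeta$ run exactly as in the proof of Lemma~\ref{comp-lem0} (no truncation is needed here, since there is no right-hand side). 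The parabolic term then contributes a nonnegative boundary quantity which is simply discarded, so that, after letting the time variable run to $t_0$, one is left with
$$
\int_{Q_R}\langle a(x,t,Dw)-a(x_0,t,Dv),\, Dw-Dv\rangle\, dx\, dt\leq 0\,.
$$
Writing $a(x,t,Dw)-a(x_0,t,Dv)=[a(x_0,t,Dw)-a(x_0,t,Dv)]+[a(x,t,Dw)-a(x_0,t,Dw)]$, bounding the first bracket from below by $\nu|Dw-Dv|^2$ through \rif{par2}$_2$, bounding the second in absolute value by $L_1\omega(R)(|Dw|+s)|Dw-Dv|$ through \rif{par1}$_3$ and the monotonicity of $\omega$, and absorbing by Young's inequality, I obtain
$$
\mean{Q_{R}}|Dw-Dv|^2\, dx\, dt\leq c\,[L_1\omega(R)]^2\mean{Q_{R}}(|Dw|+s)^2\, dx\, dt\,,\qquad c=c(n,\nu)\,.
$$

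\emph{Steps 2 and 3.} Since $w$ is a weak solution of a uniformly parabolic equation with linear growth, the Caccioppoli inequality for $w$, the parabolic Sobolev embedding and the parabolic version of Gehring's lemma yield an exponent $\chi_0=\chi_0(n,\nu,L)>1$ for which the reverse H\"older inequality $(\mean{Q_{\varrho/2}}(|Dw|+s)^{2\chi_0})^{1/\chi_0}\leq c\,\mean{Q_{\varrho}}(|Dw|+s)^2$ holds on parabolic sub-cylinders of $Q_{2R}$; applying Lemma~\ref{revq} in its parabolic form with $g\equiv(|Dw|+s)^2$ then gives $(\mean{Q_R}(|Dw|+s)^2\, dx\, dt)^{1/2}\leq c\,\mean{Q_{2R}}(|Dw|+s)\, dx\, dt$. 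Combining this with Step~1 and H\"older's inequality,
$$
\mean{Q_{R}}|Dw-Dv|\, dx\, dt\leq c\,L_1\omega(R)\mean{Q_{2R}}(|Dw|+s)\, dx\, dt\,.
$$
Finally the triangle inequality gives $\mean{Q_R}|Du-Dv|\, dx\, dt\leq\mean{Q_R}|Du-Dw|\, dx\, dt+\mean{Q_R}|Dw-Dv|\, dx\, dt$; since $|Q_{2R}|/|Q_R|=2^N$, estimate \rif{comparison} yields both $\mean{Q_R}|Du-Dw|\, dx\, dt\leq 2^N\mean{Q_{2R}}|Du-Dw|\, dx\, dt\leq c\,|\mu|(Q_{2R})/R^{N-1}$ and $\mean{Q_{2R}}(|Dw|+s)\, dx\, dt\leq\mean{Q_{2R}}(|Du|+s)\, dx\, dt+c\,|\mu|(Q_{2R})/R^{N-1}$, and inserting these into the previous display gives exactly \rif{comparisonpar}.

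\emph{Main obstacle.} Steps~2 and 3 are routine; the delicate point — as anticipated in the Introduction — is the time-term analysis in Step~1, namely justifying, at the level of Steklov averages, the pairing of $\partial_t(w-v)$ with $w-v$ and passing to the limit, the initial datum of $w$ being attained only in $L^2(B(x_0,2R))$; this has to be handled precisely as in the proof of Lemma~\ref{comp-lem0}. A secondary technicality is securing the parabolic reverse H\"older inequality of Step~2 with a constant that is invariant under the anisotropic parabolic scaling.
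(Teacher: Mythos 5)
Your proposal is correct and follows essentially the same route as the paper: the energy comparison between $w$ and $v$ via Steklov averages (treating the frozen-coefficient discrepancy $a(x,t,Dw)-a(x_0,t,Dw)$ as the forcing term and absorbing by Young), the upgrade of the $L^2$-average of $|Dw|+s$ on $Q_R$ to an $L^1$-average on $Q_{2R}$ through Gehring's lemma combined with Lemma~\ref{revq}, and then the triangle inequality with \rif{comparison} used twice. The two technical points you flag are exactly the ones the paper handles (the time-term is made rigorous precisely as in the proof of Lemma~\ref{comp-lem0}).
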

\begin{proof} Let us first get the estimate
\begin{equation}\label{comparisonpar0}
     \mean{Q_{R}} |Dv-Dw|^2\, dx \, dt \leq cL_1^2[\omega(R)]^2\mean{Q_{R}} (|Dw|+s)^2\, dx \, dt \, .
\end{equation}
The proof consists of a rather standard comparison argument which we report here for the sake of completeness. The following computations are formal, as they would need the existence of the time derivatives for both $v$ and $w$; on the other hand they can be made rigorous by using the Steklov-averages formulation exactly as done for \rif{CD-Steklov-test}-\rif{first}. Using the fact that both $v$ and $w$ are solutions yields
\begin{eqnarray}
&& \nonumber(v-w)_t -\divo \, \left(a(x_0,t,Dv)-a(x_0,t,Dw)\right)\\
&& \qquad \qquad = \divo \, \left(a(x_0,t,Dw)-a(x,t,Dw)\right)\,.\label{diffeq}
\end{eqnarray}
By testing the weak form of the previous equality with $v-w$ - here we need to pass to Steklov averages - we get, after standard parabolic manipulations and the fact that $v$ and $w$ agree on $\partial_{\rm par} Q_{R}$, that
\begin{eqnarray*}
&& \sup_{t_0-R^2< t < t_0} \int_{B(x_0,R)} |v-w|^2(x,t)\, dx \\ && \qquad \qquad  + \int_{Q_{R}} \langle a(x_0,t,Dv)-a(x_0,t,Dw), Dv- Dw \rangle \, dx\, dt\\
&& \qquad \qquad \qquad \leq
\left|
\int_{Q_{R}} \langle a(x_0,t,Dw)-a(x,t,Dw), Dv- Dw \rangle \, dx\, dt\right|\,.
\end{eqnarray*}
Discarding the first term in the previous inequality, using \rif{par2} and \rif{par1}$_3$, yields
$$
\mean{Q_R} |Dv-Dw|^2\, dx\, dt  \leq cL_1\omega(R)\mean{Q_R} (|Dw|+s)|Dv-Dw|\, dx\, dt\,.
$$
At this stage \rif{comparisonpar0} follows by the last estimate and a standard use of Young's inequality. The next step is to recall a higher integrability estimate which follows form the application of Gehring's lemma in the parabolic setting. The statement can be found in several paperssee for instance \cite{NW}, once taking into account the growth conditions \rif{par1} considered here. There exist constants $c\geq 1$ and $\chi>2$, depending only on $n,\nu, L$, such that the following inequality:
$$
\left(\mean{Q_\varrho} |Dw|^{\chi} \,
dx\, dt \right)^{\frac{1}{\chi}} \leq c  \left(\mean{Q_{2\varrho}}
(|Dw|+s)^{2} \, dx\, dt\right)^{\frac{1}{2}} $$
holds whenever $Q_{2\varrho} \subset Q_{2R}$. We are therefore in position to apply Lemma \ref{revq},
 which allows to establish
\eqn{revpar}
$$
\left(\mean{Q_R} |Dw|^{2} \,
dx\, dt \right)^{\frac{1}{2}} \leq c \mean{Q_{2R}}
(|Dw|+s) \, dx\, dt\,.$$
Combining \rif{comparisonpar0} and \rif{revpar} via H\"older's inequality we gain
$$
\mean{Q_{R}} |Dv-Dw|\, dx \, dt \leq L_1\omega(R) \mean{Q_{2R}}
(|Dw|+s) \, dx\, dt\,.
$$
Again combining the last estimate with \rif{comparison}, and actually using it twice, finally gives \rif{comparisonpar}.
\end{proof}
We conclude this section with yet another a priori estimate, in which, {\em for the first and only time} in this section we use the complete assumptions \rif{par1} rather than the weaker ones in \rif{par2}.
\begin{prop} Let $v$ be defined in \trif{defiv}-\trif{CD-localv}; there exist constants $c \geq 1$ and $\beta \in (0,1]$, depending only on $n,\nu,L$, such that the following inequality:
\eqn{estpbasevv}
$$
\mean{Q_\varrho} |D v - (D v)_{Q_\varrho}|\, dx \, dt \leq c \left(\frac{\varrho}{R}\right)^{\beta}\mean{Q_R} |D v - (D v)_{Q_R}|\, dx \, dt
$$
holds whenever $Q_\varrho \subseteq Q_R $
 is a parabolic cylinder with the same vertex of $Q_R$. More in general, the inequality
\eqn{compo}
$$
\mean{Q_\varrho} |D_{\xi} v - (D_{\xi} v)_{Q_\varrho}|\, dx \, dt
\leq  c \left(\frac{\varrho}{R}\right)^{\beta}\mean{Q_{R}} |D_{\xi} v - (D_{\xi} v)_{Q_{R}}|\, dx \, dt
$$
holds whenever $\xi \in \{1,\ldots, n\}$ and $\varrho \leq R$.
\end{prop}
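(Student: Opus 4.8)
The plan is to deduce both \rif{estpbasevv} and \rif{compo} from Proposition \ref{decbase} --- that is, from parabolic De Giorgi--Nash--Moser theory --- applied not to $v$ itself but to the spatial derivatives of $v$, after differentiating the frozen equation \rif{CD-localv}$_1$ in the $x$-variables. We are here in the non-degenerate, linear growth, uniformly parabolic regime dictated by the \emph{full} assumptions \rif{par1} with $p=2$: recall that \rif{par1}$_1$ forces $|a_{z}(x_0,t,z)|\le L$, while \rif{par1}$_2$ gives $\nu|\lambda|^2\le\langle a_{z}(x_0,t,z)\lambda,\lambda\rangle$. Under these conditions the solution $v$ of \rif{defiv}--\rif{CD-localv} enjoys the interior regularity $Dv\in L^2_{\loc}\cap C^0_{\loc}$ together with $D^2v,v_t\in L^2_{\loc}$; this is the classical difference-quotient (in $x$) regularity for such equations, the merely measurable dependence of $a$ on $t$ being harmless for this purpose. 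So the first step would be to record this.

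Granted the regularity, differentiating \rif{CD-localv}$_1$ with respect to $x_\xi$ shows that, for each fixed $\xi\in\{1,\ldots,n\}$, the scalar function $w:=D_\xi v$ is a weak solution of the linear homogeneous parabolic equation
$$
w_t-\divo\,(\tta(x,t)Dw)=0\,,\qquad \tta_{ij}(x,t):=(a_{i})_{z_j}(x_0,t,Dv(x,t))\,,
$$
where the matrix $\tta$ has measurable entries and satisfies $\nu|\lambda|^2\le\langle\tta(x,t)\lambda,\lambda\rangle$ and $|\tta(x,t)|\le L$ for every $\lambda\in\er^n$. Equivalently, $b(x,t,z):=\tta(x,t)z$ obeys \rif{par2} with $s=0$, and $w$ lies in the class $C^0(\cdot;L^2)\cap L^2(\cdot;W^{1,2})$ on interior subcylinders. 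Hence Proposition \ref{decbase} applies to $w$ and --- crucially, since $s=0$ there is no additional $sR$-term --- it yields exactly
$$
\mean{Q_\varrho}|D_\xi v-(D_\xi v)_{Q_\varrho}|\,dx\,dt\le c\Big(\frac{\varrho}{R}\Big)^{\beta}\mean{Q_R}|D_\xi v-(D_\xi v)_{Q_R}|\,dx\,dt
$$
with $c\geq 1$ and $\beta\in(0,1]$ depending only on $n,\nu,L$, which is \rif{compo}. As usual, this is first obtained for $Q_\varrho\Subset Q_R$ and then extended to $Q_\varrho\subseteq Q_R$ by exhausting $Q_R$.

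Estimate \rif{estpbasevv} is then an elementary consequence of \rif{compo}. Using the double-average bounds $\mean{Q}|g-(g)_Q|\le\mean{Q}\mean{Q}|g(x)-g(y)|\le 2\mean{Q}|g-(g)_Q|$, valid for any integrable $g$, together with $|Dv(x)-Dv(y)|\le\sum_{\xi}|D_\xi v(x)-D_\xi v(y)|\le\sqrt{n}\,|Dv(x)-Dv(y)|$, one estimates
$$
\mean{Q_\varrho}|Dv-(Dv)_{Q_\varrho}|\le\sum_{\xi=1}^n\mean{Q_\varrho}\mean{Q_\varrho}|D_\xi v(x)-D_\xi v(y)|\le 2\sum_{\xi=1}^n\mean{Q_\varrho}|D_\xi v-(D_\xi v)_{Q_\varrho}|\,,
$$
applies \rif{compo} to each summand, and finally re-absorbs $\sum_\xi\mean{Q_R}|D_\xi v-(D_\xi v)_{Q_R}|\le 2\sqrt{n}\,\mean{Q_R}|Dv-(Dv)_{Q_R}|$ by the same bounds, reaching \rif{estpbasevv} with $c\equiv c(n,\nu,L)$.

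The only genuinely technical ingredient --- hence the expected main obstacle --- is the interior differentiability of $v$ that legitimises the equation for $w=D_\xi v$ and, in particular, places each $D_\xi v$ in the function class demanded by Proposition \ref{decbase}. This is precisely the point at which the full strength of \rif{par1} (the $C^1$-dependence on $z$ with bounded, elliptic $a_z$), rather than the weaker monotonicity \rif{par2} used everywhere else in this section, is actually needed; it is classical for uniformly parabolic equations of this type, but must be invoked with some care (commuting $\partial_t$ and $D_\xi$ on $v$, Caccioppoli estimates for difference quotients in $x$). Everything beyond it is bookkeeping and a direct appeal to Proposition \ref{decbase}.
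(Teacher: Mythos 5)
Your proposal is correct and follows essentially the same route as the paper: establish $W^{2,2}$-regularity of $v$ by difference quotients in $x$, observe that each $D_\xi v$ solves the linearized equation with measurable coefficients $\tta_{ij}(x,t)=(a_i)_{z_j}(x_0,t,Dv)$ satisfying \rif{par2} with $s=0$, apply Proposition \ref{decbase} (so no $sR$-term appears), and then pass from \rif{compo} to \rif{estpbasevv} by summing over $\xi$. The only cosmetic difference is that the paper recovers the full range $\varrho\le R$ by working on $Q_{R/2}$ and enlarging via a \rif{stimalike}-type doubling, while you invoke an exhaustion; and your double-average argument for deducing \rif{estpbasevv} can be shortened since $(D_\xi v)_Q$ is just the $\xi$-th component of $(Dv)_Q$.
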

\begin{proof} We recall the difference quotients argument developed for instance in \cite[Chapter 8]{D}. We indeed have that $v$ is higher differentiable in a smaller cylinder: $v \in L^2 (t_0-R^2/4, t_0; W^{2,2}(B(x_0,R/2)))$
 and moreover, whenever $\xi \in \{1,\ldots, n\}$ we have $D_\xi v \in  C^0 (t_0-R^2/4, t_0; L^{2}(B(x_0,R/2)))$. Finally, the function $D_\xi v$ solves the following differentiated equation:
$$
(D_{\xi} v)_t - \divo\, (\tilde{a}(x,t)DD_\xi v)=0\,,
$$
weakly in $Q(x_0,t_0;R/2)\equiv Q_{R/2}$, where the matrix $\tilde{a}_{ij}(x,t):= D_{z_j}a_i(x_0,t,Dv(x,t))$ has measurable entries. By assumptions \rif{par1} we have that the following monotonicity and growth conditions are satisfied:
$$
\nu |z|^2\leq \langle \tilde{a}(x,t)z, z \rangle\,, \qquad \qquad  |\tilde{a}(x,t)z|\leq L|z|
$$
whenever $z \in \er^n$, and $(x,t)\in Q_{R/2}$. Therefore, if we let $b(x,t,z):= \tilde{a}(x,t)z$, we may apply Proposition \ref{decbase} to $\tu \equiv D_\xi v$ in the cylinder $Q_{R/2}$, thereby obtaining
\begin{eqnarray}
\mean{Q_\varrho} |D_{\xi} v - (D_{\xi} v)_{Q_\varrho}|\, dx \, dt & \leq & c \left(\frac{\varrho}{R/2}\right)^{\beta}\mean{Q_{R/2}} |D_{\xi} v - (D_{\xi} v)_{Q_{R/2}}|\, dx \, dt\nonumber \\
&\leq & c \left(\frac{\varrho}{R}\right)^{\beta}\mean{Q_{R}} |D_{\xi} v - (D_{\xi} v)_{Q_{R}}|\, dx \, dt\label{compo0}
\end{eqnarray}
that holds whenever $\varrho \leq R/2$, and
where again $c \equiv c (n, \nu, L)$; we notice that the last estimate in \rif{compo0} has been obtained arguing as for \rif{stimalike}. Finally, the same inequality follows for $\varrho \in (R/2,R]$, trivially, and therefore \rif{compo} is completely established. In turn, since $\xi \in \{1,\ldots, n\}$ is arbitrary \rif{compo} implies \rif{estpbasevv} and the proof is complete.
\end{proof}
\subsection{Proof of Theorems \ref{mainp}-\ref{mainpa}} 
We start by Theorem \ref{mainp}.
In Sections \ref{pp1}-\ref{pp2} we have built the necessary set up in order to adapt the elliptic proof of Section \ref{ellell} to the parabolic case, and we shall therefore outline the relevant modifications.
Combining estimates \rif{comparisonpar} and \rif{estpbasevv} as done in Lemma \ref{compxxx} with estimates \rif{sottosti} and \rif{comp12}, we have the analogue
 of estimate \rif{CComp}
\begin{eqnarray}
&&\nonumber \mean{Q_\varrho} |D u- (D u)_{Q_{\varrho}}|\, dx\, dt \leq  c_1 \left(\frac{\varrho}{R}\right)^\beta \mean{Q_{2R}} |D u- (D u)_{Q_{2R}}|\, dx\, dt\\
&& \qquad  + c\left(\frac{R}{\varrho}\right)^{N}\frac{|\mu|(Q_{2R})}{R^{N-1}}+ c_2\left(\frac{R}{\varrho}\right)^{N}[L_1\omega(R)]\mean{Q_{2R}}(|D u|+s)\, dx\, dt\,,\label{CCompar}
\end{eqnarray}
where the constants $c, c_1, c_2\geq 1$ and $\beta \in (0,1]$ depend only on $n, \nu, L$; the last inequality holds whenever $Q_\varrho \subseteq Q_R$ are backward parabolic cylinders sharing the same vertex, and with $R< R_1$ where $R_1 \equiv R_1 (L_1,\omega(\cdot))>0$ being a suitable small radius; the restriction on $R$ is not necessary when $a(\cdot)$ is independent of $x$, although $a(\cdot)$ can be still depending on the time variable $t$. By \rif{CCompar}, choosing $H \in \en$ - similarly to \rif{sceltasmall} - large enough to have
$
c_1/H^\beta \leq 1/4,
$
and taking $R\leq \tilde{R}\leq R_1$ such that $c_2(2H)^{N}[L_1\omega(R)]\leq 1/4$ similarly to \rif{sceltasmall2}, and proceeding as for \rif{CCompdopo0}-\rif{CCompdopo} we obtain
\begin{eqnarray}
&&\nonumber \mean{Q_{R/H}} |D u- (D u)_{Q_{R/H}}|\, dx\, dt \leq  \frac{1}{2} \mean{Q_{2R}} |D u- (D u)_{Q_{2R}}|\, dx\, dt\\
&& \hspace{5cm} + \frac{c|\mu|(Q_{2R})}{R^{N-1}}+ cL_1\omega(R)\left[|(Du)_{Q_{2R}}|+s\right]\,.\label{CCompar2}
\end{eqnarray}
The last inequality is again valid for cylinders with same vertex and for a constant $c$ depending on $n,\nu,L$, with the restriction $R\leq \tilde{R}$.
We now just have to follow the scheme after \rif{CCompdopo}. Specifically, we let
$$
Q_i:= Q(x_0,t_0;R/(2H)^i)\,,\qquad k_i:=|(D u)_{Q_i}|\,;$$ letting - as in \rif{deffia} -
$$
A_i:= \mean{Q_i} |D u- (D u)_{Q_{i}}|\, dx\, dt
$$
we apply \rif{CCompar2} with $Q_R\equiv Q_i$, and iterating as after \rif{CC3}, we finally achieve estimate \rif{parest1}.

As for Theorem \ref{mainpa}, to obtain \rif{parest1cc} we argue as follows. We start by \rif{compo}; this holds directly for $w$ defined in \trif{defiw}-\trif{CD-local}, recall that in this case $v\equiv w$ as we do not have $x$-dependence and we do not have to freeze at $x_0$. Next we combine \rif{compo} directly with \rif{comparison} and obtain the following analogue of \rif{CCompar}:
\begin{eqnarray}
&&\nonumber \mean{Q_\varrho} |D_\xi u- (D_\xi u)_{Q_{\varrho}}|\, dx\, dt \leq  c_1 \left(\frac{\varrho}{R}\right)^\beta \mean{Q_{2R}} |D_\xi u- (D_\xi u)_{Q_{2R}}|\, dx\, dt\\
&& \hspace{6cm}  + c\left(\frac{R}{\varrho}\right)^{N}\frac{|\mu|(Q_{2R})}{R^{N-1}}\,.\label{ana1}
\end{eqnarray}
Now \rif{parest1cc} follows by iterating as in the proof of
 Theorem \ref{mainp}, by setting this time $$
Q_i:= Q(x_0,t_0;R/(2H)^i)\,,\qquad k_i:=|(D_\xi u)_{Q_i}|$$ and
$$
A_i:= \mean{Q_i} |D_\xi u- (D_\xi u)_{Q_{i}}|\, dx\, dt\,.
$$
Worth remarking differences are that in \rif{Msetting} the value of it is now
$$
M := \mean{Q(x_0,t_0;R)}(|D_\xi u|+s)\, dx\, dt + {\bf I}_{1}^\mu(x_0,t_0;2R)
$$ and this comes from the fact that, being now $\omega(\cdot)$ we also have $d(2R)=0$ and therefore \rif{triest} can be replaced by
$$
A_0 + k_0\leq c\mean{Q(x_0,t_0;R)}(|D_\xi u|+s)\, dx\, dt .
$$
\subsection{Proof of Theorem \ref{mainp2}} The main difference with respect to the proof of Theorem \ref{mainp} is that we just need to use estimate \rif{comparison2}, and no other intermediate comparison inequality of the type \rif{comparisonpar}. This is essentially the reason why Theorem \ref{mainp2} applies - as expected for zero-order estimates - to operators
 with measurable coefficients. As for the proof, we first obtain the estimate
\begin{eqnarray}\nonumber \mean{Q_\varrho} | u- (u)_{Q_{\varrho}}|\, dx\, dt & \leq & c_1 \left(\frac{\varrho}{R}\right)^\beta \mean{Q_{2R}} |u- ( u)_{Q_{2R}}|\, dx\, dt\\ && \qquad  +c\left(\frac{R}{\varrho}\right)^{N}\frac{|\mu|(Q_{2R})}{R^{N-2}}+ cRs\label{CCompar3}
\end{eqnarray}
which is the analogue of \rif{ana1} and of \rif{CCompar2} when no space coefficients come into the play. As usual, \rif{CCompar3} works for backward cylinders with the same vertex, and the constants $c, c_1\geq 1$ and $\beta \in (0,1]$ depending
 only on $n, \nu, L$. Estimate \rif{CCompar3} can be derived by first applying Proposition \ref{decbase} with $\tu \equiv w$ and $b(\cdot)\equiv a(\cdot)$, thereby getting the reference estimate
\eqn{CCompar33}
$$
\int_{Q_\varrho} |w - (w)_{Q_\varrho}|\, dx \, dt \leq c \left(\frac{\varrho}{R}\right)^{N+\beta}\int_{Q_{2R}} |w - (w)_{Q_{2R}}|\, dx \, dt + cRs\,,
$$
whenever $Q_{\varrho}\subseteq Q_{2R}$ is a cylinder with the vertex of $Q_{2R}$, and where $c\geq 1$ and $\beta \in (0,1]$ depend only on $n, \nu, L$. Combining this last estimate with \rif{comparison2} in the same way of Lemma \ref{compxxx} - and exactly for \rif{CCompar} - finally yields \rif{CCompar3}. By \rif{CCompar3}, choosing $H$ to have
$
c_1/H^\beta \leq 1/2,
$
we obtain, again for $c \equiv c(n, \nu, L)$
\eqn{cccccc}
$$ \mean{Q_{R/H}} | u- ( u)_{Q_{R/H}}|\, dx\, dt \leq  \frac{1}{2} \mean{Q_{2R}} |u- (u)_{Q_{2R}}|\, dx\, dt + \frac{c|\mu|(Q_{2R})}{R^{N-2}}+ cRs\,.
$$
The last estimate can be iterated as done for \rif{CCompar2}, as after \rif{CCompdopo}; we let
$$
Q_i:= Q(x_0,t_0;R/(2H)^i)\,, \qquad k_i:=|(u)_{Q_i}|$$ and finally
$$
A_i:= \mean{Q_i} | u- ( u)_{Q_{i}}|\, dx\, dt
$$
and apply \rif{cccccc} with $Q_R \equiv Q_i$, iterating as after \rif{CC3}, thereby concluding the proof.
\begin{remark}[Zero order elliptic estimate]\label{alternative} The method of proof used for Theorem \ref{mainp2} easily leads to an alternative proof of the zero order estimate \rif{KM} - even with $\gamma=1$ - from \cite{KM, TW} for the elliptic case \rif{baseq} under the assumptions
\begin{equation}\label{parell2}
    \left\{
    \begin{array}{c}
 |a(x,z)|\leq L(|z|+s)^{p-1} \\[4pt]
    \nu|z_2-z_1|^p \leq \langle a(x,z_2)-a(x,z_1), z_2-z_1\rangle
    \end{array}
    \right.
\end{equation}
whenever $z_1,z_2 \in \er^n$ and $x \in \Omega$; compare with \rif{par2}. We observe that this proof works also in the case of general signed measures, and the final outcome is
\eqn{KM222}
$$
|u(x_0)| \leq c  \mean{B(x_0,R)}(|u|+Rs)\, dx  + c\ww(x_0,2R)\,.
$$
The proof in question seems to be slightly easier that those proposed up to now, although the assumptions \rif{parell2} are slightly stronger than those in \cite{TW}. For the proof, following the scheme of the proof of Theorem \ref{mainp2}, we essentially need to have two ingredients: a reference decay estimate for comparison function $w$ introduced in \rif{Dirc1}, and a suitable comparison estimate between $u$ and $w$. This last one has already been proved in \rif{comp111}, while following for instance \cite[Theorem 5.1]{mis2}, and using as a starting point the proof of the decay estimate in \cite[Theorem 7.7]{G}, we have that the following estimate holds whenever $B_\varrho \subset B_{2R}$ is a concentric to $B_{2R}$ for the function $w$ in \rif{Dirc1}:
$$
\mean{B_\varrho} |w - (w)_{B_\varrho}|\, dx  \leq c \left(\frac{\varrho}{R}\right)^{\beta}\mean{B_{2R}} |w - (w)_{B_{2R}}|\, dx +cRs\,.
$$
At this stage combining the previous estimate with \rif{comp111} in the same way as we combined \rif{CCompar33} with \rif{comparison2} in the proof of Theorem \ref{mainp2}, and iterating as in the proof of Theorem \ref{mainx}, we finally come up with \rif{KM222}. We just mention that, with the notation introduced in the proof of Theorem \ref{mainx}, here we have to define $$k_i:=|(u)_{B_i}|\,,\qquad \mbox{and}\qquad
A_i:= \mean{B_i} |u- (u)_{B_{i}}|\, dx
\,,$$ while no assumption other than measurability is to be assumed on the partial map $x \mapsto a(x, \cdot)$.
\end{remark}
\section{General weak, and very weak, solutions}\label{app}
\subsection{General elliptic problems} Here we consider Dirichlet problems of the type \rif{Dir1} assuming that $\mu \in \MM(\Omega)$ is a general Radon measure with finite total mass, and prove that estimate \rif{mainestx} holds for general weak and very weak solutions. We are hereby considering the case of zero boundary datum for simplicity, but more general cases can be considered; see also Theorem \ref{mainx22} below for general weak (energy) solutions. We need some preliminary terminology. As mentioned in Section 2 uniqueness of general very weak solutions fails, therefore one is led to consider very weak solutions enjoying additional properties, which in some situations are unique; one of such classes, of interest here, is the one of {\bf Solutions Obtained by Limit of Approximations}  (\textnormal{SOLA}). For the sake of completeness we here recall the approximation procedure in question; the main reference here are the works of Boccardo \& Gall\"ouet \cite{BG1, BG2} and Dall'Aglio \cite{Dal}. We consider a standard, symmetric and non-negative mollifier $\phi
\in C^{\infty}_0(B_1)$ such that $\|\phi\|_{L^1(\er^n)}=1$, and then
define, for every positive integer $h$, the mollifier $\phi_h(x):=h^n\phi(hx)$. Finally the
functions $\mu_h:\er^n \to \er$ are defined via convolution, $
\mu_h(x):=(\mu*\phi_h)(x).$ Next, by standard monotonicity methods, we find a unique solution $u_h \in W^{1,p}_0(\Omega)$ to \eqn{Dirapp}
$$
\left\{
    \begin{array}{cc}
    -\divo \ a(x,Du_h)=\mu_h & \qquad \mbox{in $\Omega$}\\
        u_h= 0&\qquad \mbox{on $\partial\Omega$.}
\end{array}\right.
$$
Up to passing to a not relabeled subsequence we may assume that $ \mu_h \rightharpoonup \mu
$ weakly in the sense of measures, while the results in \cite{BG1, BG2} imply \eqn{convergenza}
$$ Du_h \to Du \quad \mbox{ strongly in}\
L^{q}(\Omega)\  \mbox{for every}\ \ q < \frac{n(p-1)}{n-1}\,,\ \ \mbox{and a.e.}$$ so that \rif{Dir1} is solved by $u$ in the usual distributional sense, and therefore $u$ is a SOLA to \rif{Dir1}; this of being a limit solution of more regular solutions indeed defines SOLA. Moreover, by \cite{boccardo, Dal}, in the case $\mu \in L^1(\Omega)$ we also have that $u$ is the only SOLA of \rif{Dir1}, in the sense that if $v\in W^{1,p-1}_0(\Omega)$ is a distributional solution to \rif{Dir1}$_1$ obtainable as a pointwise limit of solutions $v_h\in W^{1,p}_0(\Omega)$ to problems of the type \rif{Dirapp} with $\mu_h$ replaced by $\tilde{\mu}_h$ and $\tilde{\mu}_h \rightharpoonup \mu$ weakly in $L^1(\Omega)$, then we have $u\equiv v$; for more information on uniqueness see Remark \ref{uniche} below.
Now we turn back to the proof of Theorem \ref{mainx}, and in particular to estimate \rif{qquu2}, that we write when applied to $u_h$, that is
\eqn{qquu3}
$$
k_{m+1}^{(h)}\leq c\left(A_0^{(h)} + k_0^{(h)} +  \sum_{i=0}^{m-1} \left[\frac{|\mu_h|(B_i)}{R_{i}^{n-1}}\right]^{\frac{1}{p-1}}\right)  + c\sum_{i=0}^{m-1}  [L_1\omega(R_{i})]^{\frac{2}{p}}(k_i^{(h)}+s)\,,
$$
where here we have obviously set $$k_{m+1}^{(h)}:=|(Du_h)_{B_{m+1}}|\qquad\mbox{and}\qquad
A_0^{(h)}:= \mean{B_{R}} |Du_h- (Du_h)_{B_R}|\, dx $$
 compare with \rif{defiBK} and \rif{deffia}, respectively.
We note that the application of estimate \rif{qquu2} to $u_h$ is legal since the only thing we used in the proof of \rif{qquu2} was that $u \in W^{1,p}_{\loc}(\Omega)$; anyway in the present setting we have $Du \in C^0(\Omega)$ by standard regularity theory. Letting $h \to \infty$ in \rif{qquu3}, and using \rif{convergenza} and the weak convergence of measures, yields
\eqn{nowon}
$$
k_{m+1}\leq c\left(A_0 + k_0 +  \sum_{i=0}^{m-1} \left[\frac{|\mu|(\overline{B_i})}{R_{i}^{n-1}}\right]^{\frac{1}{p-1}}\right)  + c\sum_{i=0}^{m-1}  [L_1\omega(R_{i})]^{\frac{2}{p}}(k_i+s)\,,
$$
where we are adopting the notation established in the proof of Theorem \ref{mainx}. From \rif{nowon} on the rest of the proof follows as for Theorem \ref{mainx} after \rif{qquu2}, but taking into account essentially two facts: first, the convergence in \rif{lebe} just takes place at Lebesgue points, and therefore almost everywhere; second a slight - but obvious - adjustment (enlargement of the balls) has to be made in order to overcome the presence of $|\mu|(\overline{B_i})$ rather than $|\mu|(B_i)$, and to recover the definition of Wolff potential. We have therefore proved the following:
\begin{theorem}\label{mainx2} Let $u \in W^{1,p-1}_0(\Omega)$ be a \textnormal{SOLA} to the problem \trif{Dir1} - which is unique in the case $\mu \in L^1(\Omega)$ -
under the assumptions \trif{asp} and \trif{intdini}. Then there exists a non-negative constant $c \equiv c (n,p,\ratio)$, and a positive radius $\tilde{R} \equiv \tilde{R}(n,p,\ratio,L_1, \omega(\cdot))$ such that
estimate
\trif{mainestx}
holds whenever $B(x_0,2R)\subseteq \Omega$ and $R\leq \tilde{R}$, for almost every $x_0 \in \Omega$. Moreover, when the vector field $a(\cdot)$ is independent of $x$, estimate \trif{mainestx} holds without any restriction on $R$.
\end{theorem}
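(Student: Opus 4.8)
The plan is to derive \trif{mainestx} for a \textnormal{SOLA} $u$ by transferring, through the approximation scheme recalled above, the a priori estimate of Theorem \ref{mainx} to the limit. First I would fix the approximating problems \trif{Dirapp}: with $\mu_h:=\mu*\phi_h$ a mollification of $\mu$ (so that $\|\mu_h\|_{L^1(\er^n)}\leq|\mu|(\er^n)$ uniformly in $h$), let $u_h\in W^{1,p}_0(\Omega)$ be the unique energy solution, which — $\mu_h$ being smooth — satisfies $u_h\in C^1(\Omega)$ by standard regularity. Hence Theorem \ref{mainx} applies to each $u_h$, and since the constants and the threshold radius $\tilde{R}$ produced there depend only on $n,p,\ratio,L_1,\omega(\cdot)$ and never on the datum, they are the same for all $h$. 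The key point is that one should \emph{not} attempt to pass to the limit directly in the pointwise bound \trif{mainestx} for $u_h$ — the Boccardo--Gallou\"et convergence \trif{convergenza} only gives $Du_h\to Du$ in $L^1_{\loc}$ and a.e.\ (recall $n(p-1)/(n-1)>1$ since $p\geq 2$), so $|Du_h(x_0)|$ need not converge at a fixed point — but rather in the discrete iterated inequality \trif{qquu2}, which involves only integral averages of $Du_h$ and the numbers $|\mu_h|(B_i)$, and is therefore stable under this convergence.

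The core step is then the limit $h\to\infty$ in \trif{qquu2} written for $u_h$. The averages $A_0^{(h)}$ and the means $k_i^{(h)}:=|(Du_h)_{B_i}|$ converge to $A_0$ and $k_i$ by the $L^1$-convergence of $Du_h$ on $B(x_0,2R)$; for the potential densities one uses that $\mu_h=\mu*\phi_h$ with $\spt\phi_h\subseteq B_{1/h}$ gives $|\mu_h|(B(x_0,\varrho))\leq|\mu|(B(x_0,\varrho+1/h))$, whence $\limsup_h|\mu_h|(B_i)\leq|\mu|(\overline{B_i})$ by continuity of the finite measure $|\mu|$ from above. This yields the analogue of \trif{qquu2} for the \textnormal{SOLA} $u$, namely \trif{nowon}, with $|\mu|(\overline{B_i})$ in place of $|\mu|(B_i)$.

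From \trif{nowon} onward I would simply rerun Steps 2 and 3 of the proof of Theorem \ref{mainx} verbatim. The only point requiring (harmless) care is the replacement of $|\mu|(B_i)$ by $|\mu|(\overline{B_i})$ in the Wolff-potential bound \trif{primapot}: since $\overline{B(x_0,R_i)}\subseteq B(x_0,2R_i)$ and the radii $R_i=R/(2H)^i$ decrease at the fixed geometric rate $2H$, enlarging each ball by the factor $2$ only affects the constant in the dyadic-sum-to-integral comparison, so that $\sum_{i}[|\mu|(\overline{B_i})/R_i^{n-1}]^{1/(p-1)}\leq c\,\npma(x_0,2R)$ still holds, the largest enlarged ball being $B(x_0,2R)\subseteq\Omega$. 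The Dini-continuity bound \trif{secondapot} and the induction leading to \trif{mainestx3} are untouched, so one obtains $k_{m+1}\leq 2c_3M$ for every $m$, with $M$ as in \trif{Msetting}. Finally $k_{m+1}=|(Du)_{B_{m+1}}|\to|Du(x_0)|$ now only at Lebesgue points of $Du$, i.e.\ for a.e.\ $x_0$, which accounts for the ``a.e.'' in the statement — the $C^1$-hypothesis of Theorem \ref{mainx} being used nowhere else. In the $x$-independent case $\omega\equiv0$, the terms forcing $R\leq\tilde{R}$ disappear and the estimate holds for every admissible ball, exactly as in Theorem \ref{mainx}.

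I expect no genuine difficulty here: the whole argument is bookkeeping. The one feature that is new relative to the a priori proof is the appearance of the closed balls $\overline{B_i}$ coming from the weak-$*$ convergence $\mu_h\rightharpoonup\mu$, together with the small enlargement of balls used to reabsorb them into the Wolff potential while keeping everything inside $B(x_0,2R)$; beyond that, one only needs to keep in mind that all the constants and the radius $\tilde{R}$ furnished by Theorem \ref{mainx} are independent of $h$, which is automatic since they never involve the datum.
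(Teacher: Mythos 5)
Your proposal is correct and follows essentially the same route as the paper: pass to the limit not in the pointwise bound but in the discrete iterated inequality \trif{qquu2} written for the approximations $u_h$, obtain \trif{nowon} with $|\mu|(\overline{B_i})$ via the convergence \trif{convergenza} and the weak convergence of the mollified measures, reabsorb the closed balls into the Wolff potential by a harmless enlargement, and conclude at Lebesgue points of $Du$. The only cosmetic difference is that you justify $\limsup_h|\mu_h|(B_i)\leq|\mu|(\overline{B_i})$ explicitly through the support of the mollifier rather than invoking weak-$*$ convergence of measures directly, which is equally valid.
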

\begin{remark}\label{uniche} As proved in \cite{boccardo, TW2}, in the case $p=2$ the Solutions Obtained by Limit of Approximations (SOLA) of previous theorem are unique in the sense that if $v\in W^{1,p-1}_0(\Omega)$ is a distributional solution to \rif{Dir1}$_1$ obtainable as a pointwise limit of solutions $v_h\in W^{1,p-1}_0(\Omega)$ to problems of the type \rif{Dirapp} with $\mu_h$ replaced by $\tilde{\mu}_h$ and $\tilde{\mu}_h \rightharpoonup \mu$, then we have $u\equiv v$. Moreover, as remarked a few lines above, the same uniqueness result holds when $\mu \in L^1(\Omega)$.
\end{remark}
In the case $\mu \in W^{-1,p'}(\Omega)$, we can deal with standard weak solutions, i.e. $u \in W^{1,p}_{\loc}(\Omega)$, by mean of local approximations of the type
\eqn{innapp}
$$
\left\{
    \begin{array}{cc}
    -\divo \ a(x,Du_h)=\mu_h & \qquad \mbox{in $\Omega'$}\\
        u_h= u&\qquad \mbox{on $\partial\Omega'$}
\end{array}\right.
$$
whenever $\Omega' \Subset \Omega$ are smooth sub-domains, just along the lines of the proof above. In this case the uniqueness of solutions obtainable in the limit is obviously guaranteed as all solutions are of class $W^{1,p}(\Omega')$. We recall that a characterization of those measures $\mu$ such that $\mu \in W^{-1,p'}(\Omega)$ is that
\eqn{crime}
$$
\int_\Omega {\bf W}^{\mu}_{1, p}(x,1)\, d|\mu|(x)=  \int_\Omega \int_0^1 \left(\frac{|\mu|(B(x,\varrho))}{\varrho^{n- p}}\right)^{\frac{1}{p-1}}\, \frac{d\varrho}{\varrho}\, d|\mu|(x)< \infty
$$
as proved in \cite{HW}; see also \cite[Theorem 4.7.5]{Z}. In particular a measure $\mu \in \MM(\Omega)$ satisfying the density condition $$|\mu|(B_R)\lesssim R^{n-p+\ep}$$ for some $\ep > 0$ satisfies \rif{crime}; for this case see also \cite{K1, K2, liebe}. We again recall that here we have trivially extended $\mu$ to the whole $\er^n$. Moreover the approximation scheme in \rif{innapp} is not necessary when $\mu$ is an integrable function. We recall that the condition $\mu \in L^{\frac{np}{np-n+p}}(\Omega)$ for $p\leq n$ implies $\mu \in W^{-1,p'}(\Omega)$ simply by Sobolev embedding theorem. In this case the proof of Theorem \ref{mainx} in Section \ref{ellipticse} works directly and leads to \rif{mainestx} which thereby holds almost everywhere. Summarizing
\begin{theorem}\label{mainx22} Let $u \in W^{1,p}_{\loc}(\Omega)$ be a local weak solution to \trif{baseq},
under the assumptions \trif{asp} and \trif{intdini}, and with $\mu \in W^{-1, p'}(\Omega)$. Then there exists a constant $c \equiv c (n,p,\ratio)>0$, and a positive radius $\tilde{R} \equiv \tilde{R}(n,p,\ratio,L_1, \omega(\cdot))$ such that
estimate
\trif{mainestx}
holds whenever $B(x_0,2R)\subseteq \Omega$ and $R\leq \tilde{R}$, for almost every $x_0 \in \Omega$. Moreover, when the vector field $a(\cdot)$ is independent of $x$, estimate \trif{mainestx} holds without any restriction on $R$.
\end{theorem}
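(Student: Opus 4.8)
The plan is to reduce matters to the a priori inequality already established inside the proof of Theorem \ref{mainx}, exploiting that, since $\mu \in W^{-1,p'}(\Omega)$, we remain throughout in the natural energy space and may therefore dispense with the delicate truncation arguments needed for genuine measure data. First I would fix a smooth subdomain $\Omega' \Subset \Omega$ with $B(x_0,2R) \Subset \Omega'$, mollify $\mu$ as in Section \ref{app} by setting $\mu_h := \mu * \phi_h$, and solve the local Dirichlet problems \rif{innapp} on $\Omega'$ with boundary datum $u$; standard monotonicity theory produces a unique $u_h \in W^{1,p}(\Omega')$. The crucial point — and the one place where $W^{-1,p'}$-regularity of $\mu$ is genuinely used — is that testing the equations for $u_h$ and for $u$ with the admissible function $u_h - u \in W^{1,p}_0(\Omega')$, and invoking the strict monotonicity \rif{mon3} together with $\mu_h \to \mu$ in $W^{-1,p'}(\Omega')$, yields $u_h \to u$ strongly in $W^{1,p}(\Omega')$, hence also in $W^{1,1}$ and a.e.

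Next I would apply the a priori inequality \rif{qquu2} to each $u_h$ — this is legitimate since, as remarked at the end of the proof of Theorem \ref{mainx}, that inequality only requires membership in $W^{1,p}_{\loc}$ — obtaining \rif{qquu3} with $k_i^{(h)} := |(Du_h)_{B_i}|$ and $A_0^{(h)} := \mean{B_R}|Du_h - (Du_h)_{B_R}|\, dx$. Letting $h \to \infty$, the strong convergence of the gradients passes $A_0^{(h)} \to A_0$ and $k_i^{(h)} \to k_i$ to the limit, while weak convergence of the measures gives $\limsup_h |\mu_h|(B_i) \le |\mu|(\overline{B_i})$; the outcome is \rif{nowon}, namely \rif{qquu2} with $|\mu|(B_i)$ replaced by $|\mu|(\overline{B_i})$. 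From here the argument is word for word Steps 2--3 of the proof of Theorem \ref{mainx}: one bounds the Wolff-type and the Dini sums exactly as in \rif{primapot}--\rif{secondapot}, the passage from $|\mu|(B_i)$ to $|\mu|(\overline{B_i})$ being absorbed by an innocuous enlargement of the dyadic radii that only affects constants, and the induction \rif{indu} then yields $k_{m+1} \le 2c_3 M$ with $M$ as in \rif{Msetting}. Since $u \in W^{1,p}_{\loc}(\Omega)$ we have $k_{m+1} \to |Du(x_0)|$ at every Lebesgue point of $Du$, hence for a.e.\ $x_0$, and \rif{mainestx} follows.

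Finally, when $\mu$ is itself an integrable function belonging to $W^{-1,p'}(\Omega)$ — for instance $\mu \in L^{np/(np-n+p)}(\Omega)$ when $p \le n$, by Sobolev embedding — no approximation at all is needed: the comparison lemmas of Section \ref{compsec} apply verbatim, the proof of Theorem \ref{mainx} runs directly on $u$, and \rif{mainestx} holds a.e.\ merely because $u \in W^{1,p}_{\loc}$ suffices for \rif{lebe} at Lebesgue points. The statement for $x$-independent $a(\cdot)$, without any restriction on $R$, is recovered exactly as in Theorem \ref{mainx} by discarding the smallness conditions \rif{sceltasmall2} and \rif{sceltadi2}. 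I expect the only genuinely delicate point to be the strong $W^{1,p}$-convergence $u_h \to u$ on $\Omega'$; everything downstream of \rif{nowon} is the bookkeeping already carried out in Sections \ref{ellell} and \ref{app}.
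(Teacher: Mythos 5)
Your proposal is correct and follows essentially the same route as the paper, which handles Theorem \ref{mainx22} precisely via the local approximation \trif{innapp} on smooth subdomains, passage to the limit in \trif{qquu2}/\trif{qquu3} as in Theorem \ref{mainx2}, and the observation that no approximation is needed for integrable data. The only point you develop more explicitly than the text is the strong $W^{1,p}(\Omega')$ convergence $u_h\to u$ obtained by testing with $u_h-u$ and using \trif{mon3}; the paper merely asserts that uniqueness of the limit is ``obviously guaranteed'' in the energy class, and your monotonicity argument is exactly the standard justification of that claim.
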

\begin{remark} By recent higher differentiability result for solutions to measure data problems - see \cite[Theorem 1.5]{mis2} - in the last two theorems estimate \trif{mainestx} does not only hold for a.e.~$x_0 \in\Omega$, but indeed outside a set with Hausdorff measure less than $n-1$. In the case $\mu \in L^\gamma_{\loc}(\Omega)$, with $1<\gamma \leq np/(np-n+p)$, the Hausdorff dimension decreases up to $n-\gamma$.
\end{remark}
\subsection{General parabolic problems}\label{appar} Here, along the lines of the previous section, we give the extension of estimates \rif{parest1} to parabolic SOLA of \rif{parpro} for general measures $\mu \in \MM(\Omega_T)$, following this time the approximation schemes in \cite{Bmany, Dal}. To this aim we shall consider Cauchy-Dirichlet problems of the type
\begin{equation}\label{apprpar}
    \left\{
    \begin{array}{cc}
    \partial_t u_h-\divo \, a(x,t,Du_h)=\mu_h&\mbox{in $ \Omega_T$}\\[3pt]
    u_h=0&\mbox{on $\partial_{\rm par}\Omega_T$\,,}
    \end{array}
    \right.
\end{equation}
solved by a functions $u_h \in C^0(-T, 0;L^2(\Omega))\cap L^2(-T, 0;W^{1,2}_0(\Omega))$, $h \in \en$. As in the elliptic case $\mu_h$ is a sequence of smooth functions obtained from $\mu$ via mollification, and therefore such that $\mu_h \rightharpoonup \mu$ weakly in the sense of measures. The main convergence result in \cite{Bmany} claims that, up to extracting a not-relabeled subsequence, we have that there exists a function $u \in L^r(-T, 0; W^{1,q}(\Omega))$ such that $u_h \rightharpoonup u$ weakly in $L^r(-T, 0; W^{1,q}(\Omega))$, for every choice of the parameters
\eqn{choicepar}
$$q < \frac{n}{n-1}\,, \qquad \qquad  r\in [1,2]\,,\qquad  \qquad \frac{2}{r}+\frac{n}{q}>n+1\,,$$
see the assumptions of \cite[Theorem 1.2]{Bmany}, while the result is proved in \cite[Section 3]{Bmany}. In particular we can choose $q=r>1$ still matching \rif{choicepar} and therefore the main convergence result in \cite[Theorem 3.3]{Bmany} states that, always up to a not-relabeled subsequence, $Du_h \to Du$ a.e. in $\Omega_T$. Then, since the sequence $\{Du_h\}$ is still bounded in $L^q(\Omega_T)$ we infer that $Du_h \to Du$ strongly in $L^1(\Omega_T)$, and finally $u$ weakly solves \rif{parpro} in the sense of \trif{weakp}; in other words $u$ is a SOLA to \rif{parpro}. We recall that uniqueness of SOLA still holds in the parabolic case provided $\mu \in L^1(\Omega_T)$; for this we refer to \cite{Dal}. At this stage, we can repeat the strategy of the elliptic proof of Theorem \ref{mainx2}: i.e. we go to the proof of Theorem \ref{mainp}, we apply it to every solutions $u_h$ in order to have inequalities as in \rif{qquu3}, we pass to the limit with respect to $h \in \en$ the resulting estimates on the numbers $k_m^{(h)}\equiv k_m$ - compare with \rif{qquu3} - and then we conclude with the desired pointwise inequality. Summarizing
\begin{theorem}\label{mainxpar2} Let $u \in L^1(-T, 0; W^{1,1}(\Omega))$ be a \textnormal{SOLA} to the problem \trif{parpro} - which is unique in the case $\mu \in L^1(\Omega_T)$ -
under the assumptions \trif{par1} and \trif{intdini}. Then there exists a constant $c\equiv c(n,\nu, L) $ and a radius $\tilde{R}\equiv \tilde{R}(n,\nu, L, L_1, \omega(\cdot)) $ such that estimate
\trif{parest1}
holds whenever $Q(x_0,t_0; 2R)\subseteq \Omega$, for almost every $(x_0,t_0) \in \Omega_T$, provided $R\leq \tilde{R}$. When the vector field $a(\cdot)$ is independent of the space variable $x$, estimate \trif{parest1} holds without any restriction on $R$.
\end{theorem}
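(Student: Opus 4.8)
The plan is to mimic, almost verbatim, the elliptic argument used for Theorem \ref{mainx2}, so I will only spell out the modifications. First I would recall the approximation scheme producing the SOLA: extend $\mu \in \MM(\Omega_T)$ to $\er^{n+1}$, mollify it by a standard parabolic mollifier to obtain smooth functions $\mu_h$ with $\mu_h \rightharpoonup \mu$ weakly in the sense of measures, and solve the regularized Cauchy-Dirichlet problems \trif{apprpar} by monotonicity methods, getting solutions $u_h \in C^0(-T,0;L^2(\Omega))\cap L^2(-T,0;W^{1,2}_0(\Omega))$. Since $\mu_h$ is smooth, parabolic regularity theory gives in addition $Du_h \in C^0(\Omega_T)$, so that Theorem \ref{mainp} applies to each $u_h$ with no extra hypotheses. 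The convergence results of \cite{Bmany} (compare also \cite{Dal}) then yield, along a not-relabeled subsequence, a function $u \in L^1(-T,0;W^{1,1}(\Omega))$ with $Du_h \to Du$ a.e.\ in $\Omega_T$; together with the uniform bound for $\{Du_h\}$ in $L^q(\Omega_T)$ for some $q>1$ matching \trif{choicepar}, an equi-integrability/Vitali argument upgrades this to $Du_h \to Du$ strongly in $L^1(\Omega_T)$, and $u$ is the desired SOLA of \trif{parpro}, unique when $\mu \in L^1(\Omega_T)$ by \cite{Dal}.

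Next I would run the proof of Theorem \ref{mainp} for each fixed $u_h$, stopping just before the concluding pointwise passage: with the dyadic backward cylinders $Q_i:=Q(x_0,t_0;R/(2H)^i)$, and setting $k_i^{(h)}:=|(Du_h)_{Q_i}|$ together with $A_0^{(h)}:=\mean{Q(x_0,t_0;R)}|Du_h-(Du_h)_{Q(x_0,t_0;R)}|\, dx\, dt$, one obtains the parabolic analogue of \trif{qquu3}, that is a dyadic recursion in which the datum enters only through the densities $|\mu_h|(Q_i)/R_i^{N-1}$ and through the quantities $\omega(R_i)$ and $d(2R)$. This step is legitimate because the only regularity of the solution used in deriving that recursion — namely in the comparison estimates of Section \ref{pp2} and in Proposition \ref{decbase} — is membership in the natural energy space, which $u_h$ enjoys; continuity of the gradient is invoked only at the very last, pointwise, step. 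I would then let $h\to\infty$ in this recursion: the terms $A_0^{(h)}$ and $k_i^{(h)}$ pass to the limit by the strong $L^1$ convergence of $Du_h$, while the weak-$*$ convergence of the total variations gives $\limsup_h |\mu_h|(Q_i)\le |\mu|(\overline{Q_i})$. This produces the parabolic analogue of \trif{nowon}.

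From that inequality I would recover \trif{parest1} exactly as in the a priori case — the same choice of $H$, the same iteration and induction on $m$, and the passage $m\to\infty$ — with two harmless adjustments already encountered in the elliptic setting: the limits $k_{m+1}^{(h)}\to k_{m+1}$ and then $k_{m+1}\to |Du(x_0,t_0)|$ hold only at Lebesgue points of $Du$, hence for almost every $(x_0,t_0)\in\Omega_T$; and the appearance of the closed cylinders $\overline{Q_i}$ in place of the open ones forces a negligible enlargement of radii, which does not affect the definition of ${\bf I}_1^\mu(x_0,t_0;2R)$ appearing in \trif{parest1}. The smallness threshold $\tilde R\equiv\tilde R(n,\nu,L,L_1,\omega(\cdot))$, its independence of $u$, and its removal when $a(\cdot)$ does not depend on $x$ are all inherited directly from Theorem \ref{mainp} (respectively from the componentwise version in Theorem \ref{mainpa}).

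I expect the only genuinely delicate point to be the passage to the limit in the datum term: one must keep track of the fact that it is the weak-$*$ convergence of the total variations $|\mu_h|$ on \emph{closed} cylinders that is available, not of the signed measures $\mu_h$, so that the enlargement trick is really needed and not merely cosmetic. Everything else is a mechanical transcription of the proof of Theorem \ref{mainp}, rendered rigorous by the a.e.\ and strong $L^1$ convergence of the gradients furnished by the existence theory in \cite{Bmany}.
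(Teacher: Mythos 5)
Your proposal is correct and follows essentially the same route as the paper: approximate by the regularized Cauchy--Dirichlet problems \trif{apprpar}, apply the dyadic recursion from the proof of Theorem \ref{mainp} (which only needs energy-space regularity of $u_h$) to get the parabolic analogue of \trif{qquu3}, pass to the limit using the a.e.\ and strong $L^1$ convergence of $Du_h$ from \cite{Bmany} together with weak-$*$ convergence of the measures, and conclude at Lebesgue points with the same enlargement-of-cylinders adjustment used for Theorem \ref{mainx2}. Nothing essential is missing.
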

We similarly have
\begin{theorem}\label{mainxpar22} Let $u \in L^1(-T, 0; W^{1,1}(\Omega))$ be a \textnormal{SOLA} to the problem \trif{parpro} - which is unique in the case $\mu \in L^1(\Omega_T)$ - under the assumptions \trif{par2}. Then there exists a constant $c\equiv c(n,\nu, L) $ and a radius $\tilde{R}\equiv \tilde{R}(n,\nu, L, L_1, \omega(\cdot)) $ such that estimate
\trif{parest1}
holds whenever $Q(x_0,t_0; 2R)\subseteq \Omega$, for almost every $(x_0,t_0) \in \Omega_T$, provided $R\leq \tilde{R}$. When the vector field $a(\cdot)$ is independent of the space variable $x$, estimate \trif{parest1} holds without any restriction on $R$.
\end{theorem}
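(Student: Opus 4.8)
The plan is to obtain the estimate for an arbitrary SOLA from the a priori pointwise bound established for regular solutions in Section~\ref{paraproof}, via exactly the approximation-and-passage-to-the-limit argument already used for Theorem~\ref{mainxpar2} (and, in the elliptic setting, for Theorem~\ref{mainx2}); since $u$ is only of class $L^1(-T,0;W^{1,1}(\Omega))$ and need not lie in the natural energy space, a direct application of the a priori theorem is impossible and the detour through approximations is unavoidable. First I recall the construction underlying the notion of SOLA: one mollifies $\mu$ into smooth data $\mu_h$ with $\mu_h \rightharpoonup \mu$ weakly in the sense of measures, and solves by standard monotonicity methods the regularized Cauchy-Dirichlet problems \trif{apprpar}, producing $u_h \in C^0(-T,0;L^2(\Omega)) \cap L^2(-T,0;W^{1,2}_0(\Omega))$. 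By the measure-data convergence theory of \cite{Bmany}, with the parameters chosen as $q=r>1$ compatibly with \trif{choicepar}, a not-relabelled subsequence satisfies $u_h \rightharpoonup u$ weakly in $L^r(-T,0;W^{1,q}(\Omega))$ and $Du_h \to Du$ a.e.\ in $\Omega_T$; the uniform bound on $\{Du_h\}$ in $L^q(\Omega_T)$ with $q>1$ then upgrades this, via Vitali, to $Du_h \to Du$ strongly in $L^1_{\loc}(\Omega_T)$, so that $u$ solves \trif{parpro} in the sense of \trif{weakp}, i.e.\ $u$ is a SOLA; uniqueness for $\mu \in L^1(\Omega_T)$ is that of \cite{Dal}.

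Each $u_h$ is a regular solution with $L^2$ right-hand side, so the a priori pointwise estimate of Section~\ref{paraproof} applies to it, and---more to the point---so does the iteration inequality on which that estimate rests: the parabolic analogue of \trif{qquu3}, written for the numbers $k_i^{(h)} := |(Du_h)_{Q_i}|$ (or $|(u_h)_{Q_i}|$ in the zero-order case) and $A_0^{(h)}$, holds for every $h$ along any fixed chain of backward cylinders $\{Q_i\}_{i\ge 0}$ shrinking dyadically to $(x_0,t_0)$ inside $Q(x_0,t_0;2R)$, provided $R\le\tilde R$.

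Next I let $h\to\infty$ in those inequalities. The strong $L^1_{\loc}$ convergence of the gradients (resp.\ the a.e.\ plus dominated convergence of $u_h$) gives $A_0^{(h)}\to A_0$, $k_i^{(h)}\to k_i$, and controls the averaged term on the right; the weak-$*$ convergence $\mu_h\rightharpoonup\mu$ gives $\limsup_h |\mu_h|(Q_i)\le|\mu|(\overline{Q_i})$ for each fixed $i$, and a harmless enlargement of the cylinders---precisely the adjustment made right after \trif{nowon} in the elliptic argument---lets one replace $|\mu|(\overline{Q_i})$ by $|\mu|$ of slightly larger cylinders and reassemble the caloric potential ${\bf I}_{1}^\mu(x_0,t_0;2R)$. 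This yields the limiting iteration inequality, namely the parabolic analogue of \trif{nowon}, for $k_i$, $A_0$. From there the concluding induction (Step~3 of the proof of Theorem~\ref{mainx}, transplanted verbatim to cylinders) gives $\lim_{i\to\infty} k_{i}\le cM$; since $Du$ is now merely $L^1_{\loc}$, the convergence $k_{i}\to|Du(x_0,t_0)|$ as $i\to\infty$ survives only at Lebesgue points of $Du$, hence for a.e.\ $(x_0,t_0)\in\Omega_T$, which is \trif{parest1}. When $a=a(t,\cdot)$ has no $x$-dependence, the absence of any restriction on $R$ is inherited from Section~\ref{paraproof}, where no smallness of $R$ was needed in that case.

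The main obstacle is the passage to the limit in the right-hand side, and it splits into two issues, both already met in the elliptic case. First, the theory of \cite{Bmany} only provides a.e.\ convergence of the gradients, so one has to promote it to strong $L^1_{\loc}$ convergence; this is exactly where the room $q>1$ permitted by \trif{choicepar} (hence an equi-integrable bound, whence Vitali's theorem) is indispensable. Second, weak-$*$ convergence of $\mu_h$ is only upper semicontinuous on closed sets, so the masses that appear in the limit are those of $|\mu|$ on the closures of the cylinders, and recovering exactly the potential in the statement forces the slight enlargement of cylinders, just as after \trif{nowon}. All remaining steps are a literal replay of the parabolic iteration already performed in Section~\ref{paraproof}.
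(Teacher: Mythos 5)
Your proposal is correct and follows essentially the same route as the paper: Section \ref{appar} likewise constructs the SOLA via the approximating problems \trif{apprpar}, applies the a priori iteration inequality (the parabolic analogue of \trif{qquu3}) to each $u_h$, passes to the limit using the strong $L^1$ convergence of $Du_h$ obtained from \cite{Bmany} together with the weak convergence of the $\mu_h$ (with the same slight enlargement of cylinders to absorb the masses $|\mu|(\overline{Q_i})$), and concludes at Lebesgue points. The only caveat concerns the statement itself rather than your argument: under the bare assumptions \trif{par2} the available a priori result is the zero-order Theorem \ref{mainp2}, so the conclusion should be read as \trif{lastpest} rather than \trif{parest1}, and your parenthetical variant of the iteration with $k_i^{(h)}=|(u_h)_{Q_i}|$ is precisely what is needed in that case.
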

Moreover, when $\mu$ is regular enough - for instance when it belongs to the dual space $L^2(-T, 0; W^{-1,2}(\Omega))$ - estimate \rif{parest1} works for general weak solutions $u \in L^2(-T, 0; W^{1,2}(\Omega))$ to \rif{basicpar}.
\section{Integrability estimates}\label{localest}
In this section we rapidly show how to get the integrability estimates directly from pointwise estimates as \rif{mainestx} and \rif{parest1}; the most interesting results are probably those in the parabolic case, since they are completely new, as described below, while in the elliptic case we shall mostly find a unified approach to the proof of several results appearing in the literature. In both the elliptic and the parabolic case we shall confine ourselves to outline the strategy of proof, since then the details can be easily added by the interested reader.

A main ingredient in the elliptic case here is of course the possibility to control the non-linear Wolff type potentials via the non-linear  Havin-Maz'ja potential, and ultimately via Riesz potentials, which is described in \rif{rieszbound}.
Therefore the main property to use is the boundedness of the Riesz potential in Lebesgue spaces
\eqn{riri1}
$$
\|I_{\beta}(g)\|_{L^{\frac{n q}{n-\beta q}}(\er^{n})}\leq c \|g\|_{L^q(\er^{n})} \qquad \qquad  q \in (1,n/\beta)\,,
$$
with similar estimates to hold also in Marcinkiewicz and Orlicz spaces via interpolation; 
see also \cite{mis2} and next section. Using \rif{riri1} and \rif{rieszbound} we then come to various mapping properties as for instance
\eqn{wolff}
$$
 \mu \in L^q  \Longrightarrow \npma(\cdot, R) \in L^{\frac{nq(p-1)}{n-q}} \qquad  \qquad  q \in (1,n)\,.
$$
The last implication comes of course with related quantitative estimates. In turn, using \rif{wolff} in combination with \rif{mainestx} leads to deduce the following inclusion for solutions to \rif{Dir1} considered in Theorems \ref{mainx2}-\ref{mainx22}:
\eqn{besti}
$$
 \mu \in L^q  \Longrightarrow Du \in L^{\frac{nq(p-1)}{n-q}}_{\loc}(\Omega) \qquad   \qquad q \in (1,n)\,,
$$
The last implication allows to recover various estimates available in the literature \cite{BG1, BG2, DM, I} which are usually achieved via very different techniques, according to the size of $q$. There is now a large number of possible variants to the last result. In fact by the known bounds on the Riesz potential, it is possible to easily find virtually {\em all kinds of rearrangement invariant function spaces integral estimates}. For more details on this we refer to the next section, which is dedicated to parabolic problems, and to Remark \ref{borderliner} below. Further developments of our viewpoint can be found in the forthcoming paper of Cianchi \cite{Ci2}, where a few interesting consequences of estimate \rif{mainestx} are presented. For instance, in \cite{Ci2} the following consequence of \rif{minass} is presented
\eqn{minass2}
$$\mu \in L\left(n,\frac{1}{p-1}\right) \Longrightarrow Du \in L^{\infty}\qquad \mbox{locally in}\ \Omega\,.$$
In this case something more can be actually asserted, namely the continuity of $Du$, as will be proved in the subsequent paper \cite{DMcont}. Again, Wolff potentials based continuity criteria for the gradient of solutions can be obtained starting from the methods presented here, and will be presented in \cite{DMcont}.

We also mention that by {\em localizing} inclusions \rif{wolff} and \rif{besti} it is also possible to obtain explicit local estimates related to the regularity results mentioned above, something which is also not easily reachable via the global techniques usually used in the literature. For this we also refer to \cite{mis, mis2, mis2}.
\begin{remark}[Borderline cases]\label{borderliner} In the regularity theory of measure data problems, for a long while it has been an open issue to establish the validity of estimates in Marcinkiewicz spaces in the conformal case $p=n$. Specifically, for a SOLA solution $u$ of \rif{baseq} we have the inclusion $Du \in \MM^n(\Omega)$ - see \rif{defilo2} below for the definitions. This has been first proved in \cite{DHM} directly for the $p$-Laplacean system, see also \cite{mis} for different approaches. Here we would like to outline how estimates \rif{mainestx}-\rif{mainestxr} immediately imply that $Du \in \MM_{\loc}^n(\Omega, \er^n)$ in our setting. This goes via analyzing properties of Wolff potentials in Lorentz and Marcinkiewicz spaces - see \rif{defilo1}-\rif{defilo2} below for the definitions; indeed by standard properties of Riesz potentials - see for instance \cite[Section 4]{mis2} - and \rif{rieszbound} we have
$$
 \mu \in \MM(\Omega)  \Longrightarrow \npma(\cdot, R) \in \MM^{\frac{n(p-1)}{n-1}}\,. $$
More in general, as in \rif{wolff} we have that \rif{rieszbound} implies
\eqn{wolfflo2}
$$
 \mu \in L(q,\gamma)  \Longrightarrow \npma(\cdot, R) \in L\left(\frac{nq(p-1)}{n-q},\gamma(p-1)\right) $$ whenever $q \in (1,n)$ and $\gamma \in (0,\infty]$.
Another open regularity problem was the borderline $L^p$-regularity of solutions of SOLA to \rif{plap}, this means establishing, in the case $p<n$,  that
\eqn{implylo}
$$
\mu \in L\left(\frac{np}{np-n+p}, \gamma\right) \Longrightarrow Du \in L(p,\gamma(p-1))
$$
whenever $\gamma \in (0,\infty]$. This problem, raised several times in the literature - see for instance \cite{boccann, KiL} and related references - has been recently settled in \cite[Theorem 2]{mis2}. Here another proof of \rif{implylo} follows directly from estimates \rif{mainestx}-\rif{mainestxr} via \rif{wolfflo2}.
\end{remark}
\begin{remark}[Sharpness]\label{wsharp} Inequality \rif{mainestx} is in a sense optimal as $\npma$ allows to recast all the integral estimates in the scale of Lebesgue and Lorentz spaces as described in the previous remark. This is obvious in the case $p=2$ due to well-known representation formulas for the Poisson equation \rif{stima0}; the optimality for $p>2$ follows since the result in \rif{besti} is the best possible; at this point a better potential in \rif{mainestx} would imply a better estimate in \rif{besti}, which is indeed impossible. In other words, in \rif{mainestx} no potential of the type ${\bf W}_{\beta, p}^\mu$ with $\beta>1/p$ can replace $\npma$, for every $p \geq 2$. Let us now test estimate \rif{mainestx} in the case we have the following Dirichlet problem for $p \leq n$:
\eqn{ddiirr}
$$
\left\{
\begin{array}{cccc}
-\triangle_p u & = & \delta_n &\mbox{in}\ B_1\\[5 pt]
u&=&0&\mbox{on}\ \partial B_1
\end{array}\right.
$$
with $\delta_n := \sigma_{n-1}\delta$,
where $\delta $ is the Dirac measure charging the origin and $\sigma_{n-1}:= \H^{n-1}(\partial B_1)$ is the $(n-1)$-dimensional Hausdorff measure of $\partial B_1$. The unique solution to \rif{ddiirr} is given by the so called fundamental solution of the $p$-Laplacean operator, that is
$$
u(x):=\left\{
\begin{array}{cccc}
\frac{p-1}{n-p}~\left(|x|^{\frac{p-n}{p-1}}-1\right)  &\mbox{if}& p< n\\[5 pt]
-\log |x|&\mbox{if} & p=n\,.
\end{array}\right.
$$
The uniqueness is a consequence of classical results of Serrin \cite{serrin} and Kichenassamy \& Veron \cite{KV}; an account of the proof can be found in \cite[Section 4.4]{minq}. Let us show that in this case estimate \rif{mainestx} reverses: let us take $x_0\not= 0$ such that $|x_0|\leq 1/2$, then we have
\eqn{invstima}
$$
\mean{B(x_0,R)}|Du|\, dx  +{\bf W}_{\frac{1}{p},p}^{\delta_n}(x_0,2R) \leq c|Du(x_0)|\,,
$$
with $R=|x_0|$, and for a constant $c$ depending only on $n$ and $p$.
Indeed by co-area formula we have
\begin{eqnarray*}
\mean{B(x_0,R)}|Du|\, dx& \leq  &c(n)\mean{B(0,2R)}|Du|\, dx \\ & =& \frac{c(n)}{R^n}\int_0^{2R} \int_{\partial B_\varrho} |Du(y)|\, d\H^{n-1}(y)\, d \varrho \\ &= &  \frac{c(n,p)}{R^n}\int_0^{2R} \varrho^{n+\frac{1-n}{p-1}}\, \frac{d \varrho}{\varrho}\\
&=& c(n,p)R^{\frac{1-n}{p-1}} \leq c(n,p) |Du(x_0)|\,.
\end{eqnarray*}
On the other hand, as $\delta_n(B(x_0,\varrho))=0$ for $\varrho\leq R$ and $\delta_n(B(x_0,\varrho))=1$ otherwise, we also have
$$
{\bf W}_{\frac{1}{p},p}^{\delta_n}(x_0,2R) \leq c (n,p) \int_{R}^{2R}\varrho^{\frac{1-n}{p-1}} \, \frac{d \varrho}{\varrho}\leq c(n,p)R^{\frac{1-n}{p-1}} \leq c(n,p) |Du(x_0)|\,,
$$
so that \rif{invstima} follows combining the last two inequalities. Inequalities as \rif{invstima} do not only hold when the right hand side of the equation is a Dirac measure; indeed, when considering more general measures concentrating on lower dimensional set -- lines, hyper-planes, etc -- we observe power-type singularities of solutions and estimates like \rif{invstima}.
\end{remark}
\subsection{Parabolic integral estimates}\label{parri} In the parabolic case the consequences of the pointwise estimates are more interesting as we are able to get sharp borderline estimates in Lorentz and Marcinkiewicz spaces which do not seem immediately reachable via the known parabolic techniques. Moreover, as in the elliptic case, we catch directly borderline cases in Lorentz spaces. To this aim let us recall that a map $g : A \subseteq \er^{n+1} \to R^k$, with $A$ being an open subset and $k \in \en$, and for $q \in [1,\infty)$ and $\gamma \in (0,\infty]$, is said to belong to the Lorentz space
$L(q,\gamma)(A)$ iff:
\eqn{defilo1}
$$\|g\|_{L(q,\gamma)(A)}^\gamma:= \gamma\int_0^{\infty}
\left(\lambda^q|\{x \in A \ : \ |g(x)|> \lambda \}|\right)^{\frac{\gamma}{q}} \, \frac{d\lambda}{\lambda}< \infty \quad \mbox{for}\ \gamma< \infty\,.$$
In the case $\gamma=\infty$ we have instead Marcinkiewicz spaces $\MM^q(A)\equiv L(q,\infty)(A)$:
\eqn{defilo2}
$$
\sup_{\lambda > 0} \, \lambda^q|\{x \in A \ : \ |g(x)|> \lambda\}|=:
\|g\|_{\MM^{q}(A)}^q< \infty\;.
$$
The local variants of such spaces are then defined in the obvious way. We here just recall that Lorentz spaces \ap tune" Lebesgue spaces via the second index according to the following chain of inclusions:
$$
L^r \equiv L(r,r)\subset L(q,\gamma) \subset L(q,q) \subset
L(q,r)\subset L(\gamma, \gamma)\equiv L^\gamma\,,$$
which hold whenever $0 < \gamma < q  < r \leq \infty$.
The following inequalities can now be obtained using the definition of caloric Riesz potentials given in \rif{parr}:
\eqn{riri22}
$$
\|I_{\beta}(g)\|_{L\left(\frac{N q}{N-\beta q},\gamma\right)(\er^{n+1})}\leq c \|g\|_{L(q, \gamma)(\er^{n+1})}
$$
which holds whenever $q>1$ and $\beta q< N$, and, in the case $q=1$,
\eqn{riri2}
$$
\|I_{\beta}(g)\|_{\MM^{\frac{N }{N-\beta }}(\er^{n+1})}\leq c|g|(\er^{n+1})\,.
$$
The last inequality still holds when $g$ is a measure and $\beta < N$. Inequalities \rif{riri22} and \rif{riri2} can be localized, more precisely it is not difficult to see that $I_{\beta}(\cdot)$ controls the localized potential ${\bf I}_{1}^\mu(\cdot;R)$ defined in \rif{calpot} in the sense that
$
{\bf I}_{\beta}^\mu(\cdot;R)\lesssim I_{\beta}(g),
$
holds. Therefore we have that
\eqn{imap}
$${\bf I}_{\beta}^\mu(\cdot;R)\colon  L(q,\gamma) \to L\left(\frac{N q}{N-\beta q},\gamma\right) \qquad \mbox{and}\qquad {\bf I}_{\beta}^\mu(\cdot;R)\colon L^1 \to \MM^{\frac{N }{N-\beta }}\,$$ hold, continuously, assuming $\beta q < N$ and $\beta < N$, respectively.
  The estimates in \rif{imap} refer to $\er^{n+1}$ as supporting domain, but they can be localized using cut-off functions and therefore yield local estimates for caloric Riesz potentials; see for instance \cite{mis2, mis3}. The final outcome is the following result:
\begin{theorem}\label{intep1} Let $u \in L^1(-T, 0; W^{1,1}(\Omega))$ be a \textnormal{SOLA} to the problem \trif{parpro} - which is unique in the case $\mu \in L^1(\Omega_T)$ - under the assumptions \trif{par1} and \trif{intdini}. Then the following implications hold:
\eqn{intep2}
$$
\mu \in L(q, \gamma) \Longrightarrow Du \in L\left(\frac{N q}{N-q},\gamma\right) \qquad \mbox{for}\ 1<q< N \ \mbox{and}\  0 <\gamma< \infty
$$
and
\eqn{intep3}
$$
\mu \in \MM(\Omega) \Longrightarrow Du \in \MM^{\frac{N }{N-1}}
$$
with all the previous inclusions being meant as {\bf local} in $\Omega$. \end{theorem}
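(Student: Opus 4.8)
The plan is to read off the integrability statements directly from the pointwise bound \rif{parest1}, which is available for \textnormal{SOLA} by Theorem \ref{mainxpar2}, combined with the mapping properties of the caloric Riesz potential collected in \rif{riri22}, \rif{riri2} and \rif{imap}. First I would fix open sets $\Omega' \Subset \Omega'' \Subset \Omega_T$ together with a radius $R_0 \leq \tilde{R}$ — with $\tilde R$ the threshold of Theorem \ref{mainxpar2} — small enough that $Q(x_0,t_0;2R_0) \subseteq \Omega''$ for every $(x_0,t_0) \in \Omega'$; this is the only restriction needed, and it disappears altogether when $a(\cdot)$ is independent of $x$. Applying \rif{parest1} at almost every $(x_0,t_0) \in \Omega'$ gives
\eqn{pwsplit}
$$
|Du(x_0,t_0)| \leq c\mean{Q(x_0,t_0;R_0)}(|Du|+s)\, dx\, dt + c\,{\bf I}_{1}^\mu(x_0,t_0;2R_0)\,,
$$
so the whole argument reduces to estimating the two terms on the right-hand side in the relevant function space.

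For the averaged term I would use that, being $u$ a \textnormal{SOLA}, one has $Du \in L^1(\Omega_T)$ — indeed $Du \in L^q(\Omega_T)$ for some $q>1$ matching \rif{choicepar}, by the approximation scheme recalled in Section \ref{appar}. Hence, for every $(x_0,t_0) \in \Omega'$,
$$
\mean{Q(x_0,t_0;R_0)}(|Du|+s)\, dx\, dt \leq \frac{1}{|Q(x_0,t_0;R_0)|}\Big(\|Du\|_{L^1(\Omega'')}+s|\Omega''|\Big)=:C_0<\infty\,,
$$
so this term is bounded on $\Omega'$. Since $\Omega'$ has finite measure, the constant $C_0$ lies in $L(r,\delta)(\Omega')$ for every $r<\infty$ and $\delta\in(0,\infty]$, in particular in each of the target spaces $L\big(\tfrac{Nq}{N-q},\gamma\big)$ and $\MM^{\frac{N}{N-1}}$ appearing in \rif{intep2}--\rif{intep3}.

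For the potential term I would first localize. Replacing $\mu$ by its restriction $\tmu$ to $\Omega''$, extended by zero to $\er^{n+1}$, leaves ${\bf I}_{1}^\mu(x_0,t_0;2R_0)$ unchanged for $(x_0,t_0)\in\Omega'$, because $Q(x_0,t_0;2R_0)\subseteq\Omega''$; and, as noted in Section \ref{parri}, a layer-cake computation gives ${\bf I}_{1}^{\tmu}(x_0,t_0;2R_0)\leq c\,I_{1}(|\tmu|)(x_0,t_0)$ with $I_1$ the caloric Riesz potential of \rif{parr}. If $\mu\in L(q,\gamma)(\Omega_T)$ with $1<q<N$ and $0<\gamma<\infty$, then $\tmu\in L(q,\gamma)(\er^{n+1})$ with comparable norm, and \rif{riri22} with $\beta=1$ yields $I_{1}(|\tmu|)\in L\big(\tfrac{Nq}{N-q},\gamma\big)(\er^{n+1})$, hence ${\bf I}_{1}^\mu(\cdot\,;2R_0)\in L\big(\tfrac{Nq}{N-q},\gamma\big)(\Omega')$. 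If instead $\mu\in\MM(\Omega_T)$, then $|\tmu|(\er^{n+1})\leq|\mu|(\Omega_T)<\infty$ and \rif{riri2} with $\beta=1$ gives $I_{1}(|\tmu|)\in\MM^{\frac{N}{N-1}}(\er^{n+1})$, hence ${\bf I}_{1}^\mu(\cdot\,;2R_0)\in\MM^{\frac{N}{N-1}}(\Omega')$. Feeding these bounds, together with the one on the averaged term, into \rif{pwsplit}, and using that on the bounded set $\Omega'$ every bounded function belongs to $L\big(\tfrac{Nq}{N-q},\gamma\big)(\Omega')$ and to $\MM^{\frac{N}{N-1}}(\Omega')$, we obtain $Du\in L\big(\tfrac{Nq}{N-q},\gamma\big)(\Omega')$, resp. $Du\in\MM^{\frac{N}{N-1}}(\Omega')$; since $\Omega'\Subset\Omega_T$ is arbitrary, the stated local inclusions \rif{intep2}--\rif{intep3} follow.

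The argument is essentially mechanical once two auxiliary facts are in place, and these are the only points requiring a little care: the domination ${\bf I}_{\beta}^\mu(\cdot\,;R)\leq c\,I_{\beta}(|\mu|)$ — obtained by writing $|\mu|(Q(x_0,t_0;\varrho))$ as the integral of the indicator of a parabolic ball and exchanging the order of integration — and the localization of the whole-space inequalities \rif{riri22}--\rif{riri2} to bounded cylinders via cut-off functions, as in \cite{mis2, mis3}. Both are routine, which is why we content ourselves with sketching the strategy.
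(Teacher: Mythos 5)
Your proposal is correct and takes essentially the same route as the paper, which presents the theorem as a direct consequence of the pointwise bound \rif{parest1} for SOLA combined with the domination ${\bf I}_{1}^{\mu}(\cdot;R)\lesssim I_{1}(|\mu|)$ and the localized mapping properties \rif{riri22}--\rif{riri2} of the caloric Riesz potential; your explicit treatment of the averaged term and the restriction of $\mu$ to $\Omega''$ just fills in the routine details the paper leaves to the reader.
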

The previous result is of course a straightforward consequence of estimate \rif{parest1} together with the properties in \rif{imap}. Moreover, using a localized version of the estimates relative to \rif{imap} it is also possible to obtain {\em explcit local estimates} relative to the implications in \rif{intep2}-\rif{intep3}. See for instance the strategies adopted in \cite{mis2}.

\end{document}